\theoremstyle{plain}
\theoremstyle{plain}
\newtheorem{thm}{Theorem}
\newtheorem{theorem}{Theorem}[section]
\newtheorem{proposition}[theorem]{Proposition}
\newtheorem{lemma}[theorem]{Lemma}
\newtheorem{claim}[theorem]{Claim}
\newtheorem{corollary}[theorem]{Corollary}
\theoremstyle{definition}
\newtheorem{definition}[theorem]{Definition}
\newtheorem{remark}[theorem]{Remark}
\newcommand{\nc}{\newcommand}
\nc{\dmo}{\DeclareMathOperator}
\nc{\Q}{\mathbb{Q}}
\nc{\R}{\mathbb{R}}
\nc{\Z}{\mathbb{Z}}
\nc{\N}{\mathbb{N}}
\nc{\C}{\mathbb{C}}
\nc{\cS}{\mathcal{S}}
\nc{\iso}{\cong}
\dmo{\Mod}{Mod}
\dmo{\Diff}{Diff}
\dmo{\Homeo}{Homeo}
\dmo{\dist}{dist}
\dmo\BDiff{BDiff}
\dmo\SO{SO}
\dmo\slide{sl}
\dmo\im{im}
\dmo\id{id}
\dmo\Fix{Fix}
\dmo\Out{Out}
\dmo{\T}{\mathcal{T}}
\dmo{\Te}{\mathcal{T}^{\epsilon}}
\dmo{\M}{\mathcal{M}}
\dmo{\Me}{\mathcal{M}^{\epsilon}}
\renewcommand{\epsilon}{\varepsilon}
\nc{\coloneq}{\mathrel{\mathop:}\mkern-1.2mu=}
\nc{\margin}[1]{\marginpar{\scriptsize #1}}
\nc{\para}[1]{\bigskip\noindent\textbf{#1}}
\begin{document}

\title[Smallest eigenvalue of hyperbolic 3-manifolds]{The smallest positive eigenvalue of fibered hyperbolic 3-manifolds}
\author{Hyungryul Baik, Ilya Gekhtman and Ursula Hamenst\"{a}dt}

\thanks
{AMS subject classification: 58C40, 30F60, 20P05\\
Research of all three authors 
supported by ERC grant 10160104}
%\\
%e-mail: ursula@math.uni-bonn.de}
\date{August 26, 2016}

\begin{abstract}
We study the smallest positive eigenvalue $\lambda_1(M)$ of the
Laplace-Beltrami operator on a closed hyperbolic 3-manifold $M$ which 
fibers over the circle, with fiber a closed surface of genus $g\geq 2$.
 We show the existence of a constant $C>0$ only depending on $g$ 
so that 
$\lambda_1(M)\in [C^{-1}/{\rm vol}(M)^2,
C\log {\rm vol}(M)/{\rm vol}(M)^{2^{2g-2}/(2^{2g-2}-1)}]$ and that this 
estimate is essentially sharp. We show that if 
$M$ is typical or random, then we have $\lambda_1(M)\in [C^{-1}/{\rm vol}(M)^2,C/{\rm vol}(M)^2]$. This rests on a result of independent interest about reccurence properties of axes of random pseudo-Anosov elements. 

\end{abstract}

\maketitle

\tableofcontents

\section{Introduction}

The smallest positive eigenvalue $\lambda_1(M)$ 
of the Laplace-Beltrami operator on a closed Riemannian manifold $M$
equals the infimum of the Rayleigh quotients  
$$ \lambda_1(M) = \inf\limits_{f \in C_m^\infty(M)} \dfrac{\int_M || \nabla f ||^2 dM }{ \int_M f^2 dM },$$
where $C_m^\infty(M)$ denotes 
the vector space of smooth functions $f$ on $M$ with $\int_M fdM=0$.

For closed hyperbolic surfaces $S$ of fixed genus $g\geq 2$
and hence of fixed volume, this 
eigenvalue can be arbitrarily close to zero if there is a
separating short geodesic on $S$. In fact,
$\lambda_{2g-3}(S)$ can be arbitrarily small
(Theorem 8.1.3 of \cite{B92}). 
But for a
closed hyperbolic 3-manifold $M$, 
Schoen \cite{S82} established the existence of a universal and explicit 
constant $b_1>0$ such that
\begin{equation}\label{lower}\lambda_1(M)\geq 
\frac{b_1}{{\rm vol}(M)^2}.\end{equation}
The same lower
bound holds true for hyperbolic 3-manifolds of finite volume
\cite{DR86}.

On the other hand, Buser \cite{Bu82} showed
that the so-called \emph{Cheeger constant} $h(M)$ of $M$
can be used to give an upper estimate for
$\lambda_1(M)$ by
\[\lambda_1(M)\leq b_2(h(M)+h^2(M))\]
where $b_2>0$ is a universal constant
(which in a more general setting depends on the
dimension and a lower bound on the Ricci curvature).

Lackenby \cite{L06} related the Cheeger constant $h(M)$ 
to the \emph{Heegaard Euler characteristic} $\chi_H(M)$ of $M$.
He showed that
\[h(M)\leq \frac{4\pi \vert \chi_H(M)\vert }{{\rm vol}(M)}.\]
If we denote by ${\rm genus}(M)$ the more familiar
Heegaard genus of $M$, then we have
$\chi_H(M)=2-2{\rm genus}(M)$.

Since there is a positive lower bound for the
volume of a hyperbolic 3-manifold,  
these results can be summarized 
as follows. For every $g>0$ there exists a constant
$b_3(g)>0$ with the following property. Let $M$ be a closed
hyperbolic 3-manifold of Heegaard genus at most $g$; then
\[\frac{b_1}{{\rm vol}(M)^2}\leq 
\lambda_1(M)\leq \frac{b_3(g)}{{\rm vol}(M)}.\]

For manifolds $M$ with a given lower bound of the injectivity
radius, there is more precise information.
Namely, White \cite{White} proved that there exists a number $b_4=b_4(g,\epsilon)>0$
such that
\[\lambda_1(M)\leq \frac{b_4(g,\epsilon)}{{\rm vol}(M)^2}\]
for all closed hyperbolic 3-manifolds of Heegaard genus at most
$g$ and injectivity radius at least $\epsilon$.
The existence of expander families yield that the
dependence of $b_4(g,\epsilon)$ on $g$ is necessary.
We refer to \cite{H16} for a more complete discussion.

In this work we are interested in $\lambda_1(M)$ for a 
closed hyperbolic three-manifold $M$ 
which fibers over the circle, with fiber a 
closed surface $S$ of genus $g\geq 2$.
Such a manifold  
can be described as a mapping torus of 
a pseudo-Anosov diffeomorphism of $S$, in particular,
there are infinitely many such mapping tori. 
The Heegaard genus of a mapping torus of genus $g$ 
is not bigger than $2g+1$ (see \cite{L06} for references).

Our first goal is to give an essentially 
sharp upper bound for $\lambda_1(M)$
for hyperbolic mapping tori $M$ of fibre genus $g$. We prove.

\begin{thm}\label{thm1}
For every $g\geq 2$ there exists a constant $C_1=C_1(g)>0$ with the following 
property. 
\begin{enumerate}
\item 
Let $M$ be a hyperbolic mapping torus of genus $g$; then
\[\lambda_1(M)\leq \frac{C_1\log {\rm vol}(M)}{{\rm vol}(M)^{2^{2g-2}/(2^{2g-2}-1)}}.\]
\item There exists a sequence $M_i$ of hyperbolic mapping tori of genus $g$
with ${\rm vol}(M_i)\to \infty$ 
such that 
\[\lambda_1(M_i)\geq \frac{C_1^{-1}}
{{\rm vol}(M_i)^{2^{2g-2}/(2^{2g-2}-1)}}.\]
\end{enumerate}
\end{thm}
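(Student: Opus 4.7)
My plan is to establish the upper bound (1) and the matching sequence (2) by complementary methods.

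For (1), the natural starting point is Buser's inequality $\lambda_1(M)\le C_g(h(M)+h(M)^2)$, which reduces the problem to bounding the Cheeger constant $h(M)$. Cutting $M=M_\phi$ along two parallel pleated surface representatives of the fiber class (each of area $4\pi(g-1)$) immediately gives $h(M) = O(1/\mathrm{vol}(M))$, hence $\lambda_1(M) = O(1/\mathrm{vol}(M))$. Since the target exponent $\beta = 2^{2g-2}/(2^{2g-2}-1)$ exceeds $1$, this naive Cheeger cut alone is not sharp. To gain the extra factor $\mathrm{vol}(M)^{-1/(2^{2g-2}-1)}$, my plan is to pass to a characteristic finite cover $\hat{S}\to S$ of degree $d=2^{2g-2}$ (arising from a natural characteristic quotient of $\pi_1(S)$, on which the mapping class group acts), over which $\phi$ lifts to $\hat\phi$. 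In the lifted mapping torus $\hat M=M_{\hat\phi}$ I would build a $\mathbb{Z}/d$-equivariant test function supported on $O(\log\mathrm{vol}(M))$ disjoint pleated slabs spread along the fibering direction, calibrated so that its Rayleigh quotient benefits from the deck-group symmetry. Averaging over the deck group yields a test function on $M$ itself whose Rayleigh quotient is of the claimed order $\log\mathrm{vol}(M)/\mathrm{vol}(M)^{d/(d-1)}$, with the logarithm arising from the number of slabs.

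For (2), I would construct an explicit sequence of pseudo-Anosov maps $\phi_i$ with $\mathrm{vol}(M_{\phi_i})\to \infty$, for instance by composing a fixed pseudo-Anosov with carefully chosen high-power Dehn twists, yielding mapping tori of bounded geometry (injectivity radius uniformly bounded below). For such a family, the matching lower bound would be established by a direct variational argument: decompose a candidate low-energy function along the fiber structure and apply Dirichlet--Neumann bracketing to pieces to estimate its Rayleigh quotient from below. Importantly, this cannot be done via Cheeger's inequality $\lambda_1\ge h(M)^2/4$ alone, because the fiber cut always gives $h(M)=O(1/\mathrm{vol}(M))$, which with $\beta<2$ is too weak to reach the claimed exponent.

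The chief obstacle, in my view, is part (1): the descent of the test function from $\hat M$ back to $M$. Spectral quantities descend only one way under coverings (eigenfunctions on the base lift to the cover, not conversely), so the equivariant function on $\hat M$ must be explicitly symmetrized over the deck group to give a valid test function on $M$. Making the parameter $d=2^{2g-2}$ appear precisely in the exponent $d/(d-1)$ requires both the right choice of characteristic cover and a delicate combinatorial accounting of how slab-widths interact with deck-averaging. This accounting is where the Cheeger/Buser framework must be combined with the hyperbolic geometry of pleated surfaces in a genuinely quantitative way, and where I expect the main technical difficulty of the argument to lie.
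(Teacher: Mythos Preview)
Your plan for part (2) contains a fatal error. You propose to construct mapping tori $M_i$ ``of bounded geometry (injectivity radius uniformly bounded below)''. But if the injectivity radius is bounded below by some $\epsilon>0$, then White's theorem (cited in the introduction as \cite{White}) gives $\lambda_1(M_i)\le b_4(g,\epsilon)/\mathrm{vol}(M_i)^2$. Since the exponent $\beta=2^{2g-2}/(2^{2g-2}-1)$ is strictly less than $2$, for large volume we have $1/\mathrm{vol}(M_i)^{\beta}\gg 1/\mathrm{vol}(M_i)^2$, so White's upper bound directly contradicts the lower bound you are trying to prove. The paper is explicit about this: right after the statement of Theorem~\ref{thm1} it notes that ``the injectivity radius of the examples in the second part of the above theorem tends to zero with $i$''. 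The examples that realize the bound must have thin parts whose geometry grows in a very specific way; in the paper they are built from pseudo-Anosovs supported on a nested chain of subsurfaces $S=S_0\supset S_1\supset\cdots\supset S_{2g-3}$, and the exponent $2^{2g-2}$ arises from the recursive volume count $\mathrm{vol}(G)\asymp k^{2^{2g-2}-1}$ for the resulting ``optimal'' combinatorial model.

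Your plan for part (1) is also off track. The number $2^{2g-2}$ in the exponent has nothing to do with a characteristic cover of the fibre; in the paper it is the parameter $2^h/(2^h-1)$ with $h=2g-2=\vert\chi(S)\vert$, and it emerges from a purely combinatorial recursion on the depth of a graph (an ``array of circles'') modelling $M_{\rm thick}$ via the Minsky--Brock--Canary--Minsky hierarchy machinery. There is no natural characteristic quotient of $\pi_1(S)$ of order $2^{2g-2}$ (the obvious mod-$2$ homology cover has degree $2^{2g}$), and more seriously, averaging a test function over a deck group cannot improve its Rayleigh quotient: the deck-invariant component of a function on $\hat M$ is exactly a function pulled back from $M$, and its Rayleigh quotient upstairs equals its Rayleigh quotient downstairs. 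So the descent step you flag as the ``chief obstacle'' is not merely delicate---it does not produce any gain at all. The $\log\mathrm{vol}(M)$ factor in the paper comes from an entirely different source: a comparison between $\lambda_1(M)$ and $\lambda_1(M_{\rm thick})$ with Neumann boundary conditions, taken from \cite{H16}.
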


We believe that Theorem \ref{thm1} easily generalizes to hyperbolic 
mapping tori of non-exeptional surfaces of 
finite type with punctures, but we did not check the details.
By the work of White 
\cite{White}, the injectivity radius of the 
examples in the second part of the
above theorem tends to zero with $i$. 

The estimates in part (1) and (2) of the 
theorem differ by a factor $\log {\rm vol}(M)$. 
This deviation arises as follows.
Any closed hyperbolic 3-manifold $M$ admits a
\emph{thick-thin decomposition}
$M=M_{\rm thick}\cup M_{\rm thin}$ where for some
small but fixed number $\epsilon >0$, 
$M_{\rm thin}$ consists of all points of 
injectivity radius smaller than $\epsilon$, 
and $M_{\rm thick}=M-M_{\rm thin}$. 

We estimate effectively 
the smallest eigenvalue
$\lambda_1(M_{\rm thick})$ of $M_{\rm thick}$ 
with Neumann boundary conditions as a function of the
volume.
We then use a result of \cite{H16}: 
There exists  
a universal constant $b>0$ such that
\[b^{-1}\lambda_1(M_{\rm thick})\leq \lambda_1(M)\leq
b\log {\rm vol}(M_{\rm thin})\lambda_1(M_{\rm thick})\]
for every closed hyperbolic 3-manifold $M$.
The factor $\log {\rm vol}(M)$ in the statement of the first
part of Theorem \ref{thm1} arises from the ratio
$\lambda_1(M)/\lambda_1(M_{\rm thick})$. 

Although there is
a sequence $M_i$ of hyperbolic
mapping tori of genus $g$ with ${\rm vol}(M_i)\to \infty$
and $\lambda_1(M_i)\geq 
b^\prime \log{\rm vol}(M_i)\lambda_1((M_i)_{\rm thick})$ where
$b^\prime>0$ is another universal constant \cite{H16}, 
we do not know whether such a sequence exists
which moreover satisfies
$\lambda_1((M_i)_{\rm thick})\geq b^{\prime\prime}/{\rm vol}(M_i)^{2^{2g-2}/(2^{2g-2}-1)}$
for a universal constant $b^{\prime\prime}>0$.

Most mapping tori $M$, however, have $\lambda_1(M)$ proportional to 
$1/{\rm vol}(M)^2$. We make this precise in the following result.
Let from now on $S$ be a closed surface of genus $g\geq 2$.

A hyperbolic mapping torus is determined up to isometry by the 
conjugacy class in the \emph{mapping class group} 
${\rm Mod}(S)$ of a defining pseudo-Anosov
element. Conjugacy classes in ${\rm Mod}(S)$ can be listed according
to their translation length. Call a property ${\mathcal P}$ for hyperbolic mapping 
tori \emph{typical} if the proportion of the number of conjugacy classes
of pseudo-Anosov
elements of translation length at most $L$ which give rise to a 3-manifold
with this property tends to one as $L\to \infty$. We refer to 
Section \ref{typical} for a more detailed
discussion. We then say that a typical mapping torus has property
${\mathcal P}$. 

Similarly, we say that a random mapping torus has property
${\mathcal P}$ if a statistical
point for a random walk on ${\rm Mod}(S)$ 
induced by a probability measure on ${\rm Mod}(S)$ whose 
finite support generates
all of ${\rm Mod}(S)$ defines a mapping
torus with this property. Answering a question of Rivin \cite{Rivin}
we show

\begin{thm}
\label{thm2}
For every $g\geq 2$ there is a constant $C_2=C_2(g)>0$ 
so that the following holds true.
Let $M$ be a typical or random mapping torus of genus $g$; then 
\[\lambda_1(M)\leq \dfrac{C_2}{{\rm vol}(M)^2}.\]
\end{thm}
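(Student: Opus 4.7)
The lower bound $\lambda_1(M_\phi)\ge b_1/{\rm vol}(M_\phi)^2$ is a special case of (\ref{lower}), so only the matching upper bound requires proof. The plan is to apply White's theorem \cite{White} directly to $M_\phi$. White's estimate reads $\lambda_1(M)\le b_4(g,\epsilon)/{\rm vol}(M)^2$ for closed hyperbolic 3-manifolds of Heegaard genus at most $g$ and injectivity radius at least $\epsilon$; since the Heegaard genus of $M_\phi$ is at most $2g+1$ (as recalled in the introduction), everything reduces to producing a uniform lower bound $\epsilon=\epsilon(g)>0$ on the injectivity radius of $M_\phi$ that holds for typical or random $\phi$.

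The injectivity radius bound will be read off from the recurrence result for axes of typical/random pseudo-Anosovs announced in the abstract. The relevant dictionary is provided by the Minsky end-invariant/hierarchy machinery: a simple closed curve $\alpha\subset S$ is represented by a short geodesic in $M_\phi$ if and only if the subsurface projection distance $d_\alpha(\mu^+_\phi,\mu^-_\phi)$ between the attracting and repelling laminations of $\phi$ is large, equivalently if and only if the projection to moduli space $\mathcal{M}(S)$ of the Teichm\"uller axis of $\phi$ makes a deep excursion into the thin stratum associated with $\alpha$. Consequently a uniform lower bound on the injectivity radius of $M_\phi$ is equivalent to uniform boundedness, by a constant depending only on $g$, of all subsurface projections $d_\alpha(\mu^+_\phi,\mu^-_\phi)$, equivalently to the axis of $\phi$ remaining in the $\epsilon(g)$-thick part of $\mathcal{M}(S)$.

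The required uniform boundedness of subsurface projections is then the content of the recurrence result. For typical $\phi$ it should follow from equidistribution of closed Teichm\"uller geodesics with respect to the Masur--Veech measure, combined with a large-deviations estimate showing that closed Teichm\"uller geodesics visiting a given $\epsilon$-thin stratum form a proportion of order $\epsilon^{2g-2}$ or smaller among closed geodesics of translation length at most $L$. For random $\phi$ it should follow from the fact that sample paths of the walk on $\Mod(S)$ project to trajectories in $\mathcal{T}(S)$ that sublinearly track a Teichm\"uller geodesic (Maher, Kaimanovich, Maher--Tiozzo), whose two limit laminations are almost surely supported on uniquely ergodic, minimal laminations with uniformly bounded subsurface projections to every annulus; the finite support and full-group generation assumptions on the driving measure are used to upgrade this to a uniform tail bound.

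The main obstacle is precisely this quantitative axis-avoidance statement. Positive-proportion recurrence of a typical or random axis to the thick part of $\mathcal{M}(S)$ is a standard consequence of ergodicity of the Teichm\"uller flow, respectively of positive drift of the random walk; but uniform boundedness of \emph{every} excursion depth is strictly stronger, and it is exactly this stronger property that fails for the pathological families behind Theorem~\ref{thm1}(2). Establishing it quantitatively, with constants depending only on $g$ (and on the driving measure in the random case), is where the typical/random hypothesis genuinely enters, and is the technical heart of the argument.
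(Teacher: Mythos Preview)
Your route through White's theorem hinges on a uniform lower bound $\epsilon(g)>0$ for the injectivity radius of a typical or random mapping torus, and that bound is false, not merely difficult.

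Take the typical case. Having $\mathrm{inj}(M_\phi)\ge\epsilon$ is equivalent, via the Minsky model and Rafi's characterization of thick Teichm\"uller geodesics, to the axis of $\phi$ lying entirely in the $\delta(\epsilon)$-thick part of moduli space. But the very equidistribution theorem you invoke shows this is \emph{not} typical: taking $V\subset Q^1(S)$ to be the (open, positive Masur--Veech measure) preimage of the $\delta$-thin part, the average over $\gamma\in\mathcal{G}(L)$ of $l(\gamma\cap V)/L$ has $\liminf$ equal to $\lambda(V)>0$, whereas if the proportion of $\gamma$ avoiding $V$ tended to $1$ this average would be at most $1-p(L)\to 0$. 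Thus for every $\epsilon>0$ the proportion of $\phi\in\mathcal{G}(L)$ with $\mathrm{inj}(M_\phi)\ge\epsilon$ stays below $1-\lambda(V)<1$. The random case is the same: the largest subsurface projection along a length-$n$ sample path is unbounded in $n$ (indeed of order $\log n$), so $\mathrm{inj}(M_{\omega_n})\to 0$ in probability rather than being bounded below.

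The paper bypasses White entirely. Proposition~\ref{thickpart} shows directly that if a proportion $\zeta>0$ of the axis consists of segments of length $2p$ contained in $\mathcal{M}(S)_\epsilon$, then $\lambda_1(M)\le b(g,\epsilon)/(\zeta^2\,\mathrm{vol}(M)^2)$: each thick segment yields, via Proposition~\ref{control}, an embedded product $S\times[0,1]\subset M$ of uniformly bounded geometry; concatenating these gives a map $M\to[0,u]$ of bounded derivative with $u$ comparable to $\zeta\,\mathrm{vol}(M)$, and pulling back two disjointly supported bumps on $[0,u]$ produces test functions with Rayleigh quotient $\asymp 1/u^2$. The hypothesis of positive-proportion recurrence---which you correctly call standard---is then supplied by equidistribution in the typical case (Proposition~\ref{prop:thickpartportion-typical}) and by Theorem~\ref{thm3} in the random case. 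What you flag as the ``main obstacle'' is therefore not an obstacle to the theorem; it is only an obstacle to the approach through White, and an insurmountable one.
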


The proof of Theorem \ref{thm2} for random mapping tori
uses the groundbreaking work of Minsky \cite{EL1}
and Brock, Canary and Minsky \cite{EL2} and recurrence properties of 
random walks on the mapping class group
${\rm Mod}(S)$ acting on 
\emph{Teichm\"uller space}
${\mathcal T}(S)$ which 
are of independent interest. In the  
formulation of our main result on random walks, we use the 
following notation. 
For a pseudo-Anosov mapping class $\phi$, we denote by $\ell(\phi)$ 
the translation length of $\phi$ for its action on 
${\mathcal T}(S)$ (which coincides with the translation length
on its axis $\gamma_\phi$). For a number $\zeta>0$ and a subset
$U$ of ${\mathcal T}(S)$ let moreover $N_{\zeta}(U)$ be the 
$\zeta$-neighborhood of $U$ with respect to the Teichm\"uller distance.
We prove.

\begin{thm}\label{thm3}
There exists a number $\zeta=\zeta(g)$ with the following property.  
Let $\mu$ be a nonelementary finitely supported probability measure on the mapping class group.
Let $U \subset \T(S)$ be an ${\rm Mod}(S)$ invariant open subset which 
contains the axis of at least one pseudo-Anosov element.
Then for each $p>0$,
there exists $c=c(U,p)>0$ such that
\begin{align} \mu^{*n}
\{\phi \in {\rm Mod}(S)& \mid
\phi \mbox{ is p-A and }\notag\\
 l(\phi)^{-1}|\{t\in [0,l(\phi)) & : \gamma_{\phi}(t-p, t+p)
\subset N_{\zeta}U \}\vert >c\} \to 1\quad (n \to \infty)\notag\end{align}
\end{thm}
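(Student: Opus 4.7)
The plan is to reduce the axis-recurrence statement to a recurrence statement for sample paths of the walk. Write $w_k = g_1 g_2 \cdots g_k$ with $g_i$ i.i.d.\ of law $\mu$, and fix a basepoint $x_0 \in \T(S)$. By Maher and Maher--Tiozzo, with probability tending to $1$ the element $w_n$ is pseudo-Anosov with translation length $\ell(w_n) \geq cn$ for a constant $c = c(\mu) > 0$. When $w_n$ is pseudo-Anosov, its axis $\gamma_{w_n}$ lies within uniformly bounded Hausdorff distance of the Teichm\"uller segment $[x_0, w_n x_0]$, and by Tiozzo's sub-linear tracking the sample path $\{w_k x_0\}_{k=0}^{n}$ shadows this segment: up to a uniform error $D_0$, each vertex $w_k x_0$ is close to a point of $\gamma_{w_n}$. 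Hence if a positive proportion of the vertices $w_k x_0$ lie in $N_\zeta U$, the same proportion of the axis lies in $N_{\zeta + D_0} U$, and the axis statement follows after enlarging $\zeta$.

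The core step is to show that there exist $\zeta, c_0 > 0$, depending only on $\mu$ and $U$, such that the joint-law probability of the event $\#\{k \leq n : w_k x_0 \in N_\zeta U\} \geq c_0 n$ tends to $1$ as $n \to \infty$. I would establish this via a block-decomposition / renewal argument. Fix a pseudo-Anosov $\psi_0$ whose axis lies in $U$. Nonelementarity of $\mu$ together with the $\Mod(S)$-invariance of $U$ allows one to construct, from a bounded-length block of $\mu$-steps, a positive-probability event that carries the walker into $N_\zeta U$ and leaves it positioned so that the event can be repeated. Concretely, one runs a ping-pong argument between powers of $\psi_0$ and suitable elements in the semigroup generated by the support of $\mu$ to produce i.i.d.\ successful blocks of bounded length, each contributing a visit to $N_\zeta U$; the strong law of large numbers then yields a linear number of visits in $n$ steps with probability tending to $1$.

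To upgrade single-time visits to the $p$-window condition $\gamma_\phi(t - p, t + p) \subset N_\zeta U$, observe that since $\mu$ has finite support the consecutive vertices $w_{k-1} x_0$ and $w_k x_0$ are at Teichm\"uller distance bounded by some $K = K(\mu)$, so a run of $m$ consecutive indices landing in $N_\zeta U$ produces a sub-arc of $\gamma_{w_n}$ of length at least $(m-1) K'$ contained in $N_{\zeta + D_0 + K} U$ for some $K' > 0$. One may arrange the block decomposition so that each successful block produces a run of length at least $\lceil 2p / K' \rceil + 1$, giving a full $2p$-window on the axis at a positive density of parameters $t$. The main obstacle, to my mind, is the recurrence estimate of the middle step: obtaining a genuinely quantitative law-of-large-numbers for the time the walk spends in a $\Mod(S)$-invariant open set of $\T(S)$ is not automatic from general random walk theory and must rely on acylindrical hyperbolicity of $\Mod(S)$ together with the specific geometry of its action on Teichm\"uller space.
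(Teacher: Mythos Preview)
Your outline has two genuine gaps, both stemming from the fact that Teichm\"uller space is not Gromov hyperbolic.

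First, the assertion that the axis $\gamma_{w_n}$ lies within \emph{uniformly bounded} Hausdorff distance of the segment $[x_0, w_n x_0]$ is false in general, and Tiozzo's sublinear tracking only gives $d_T(\tau(Ln), w_n x_0) = o(n)$, not a uniform bound $D_0$. So you cannot transfer ``a positive proportion of the vertices $w_k x_0$ lie in $N_\zeta U$'' to a statement about the axis by simply enlarging $\zeta$. The paper confronts exactly this difficulty: it first applies the Dahmani--Horbez axis result in the curve complex $\mathcal{C}(S)$ (which \emph{is} hyperbolic), and then lifts closeness in $\mathcal{C}(S)$ back to closeness in $\T(S)$ using Minsky's contraction for thick Teichm\"uller geodesics. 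For this lifting to work one needs to know that the limiting ray $\tau_{o,\omega_+}$ contains, in every linear window, a long subsegment in a fixed thick part; this is a separate recurrence statement (Proposition~\ref{longthinrare}) which the paper establishes independently. The upshot is a genuine fellow-traveling result (Proposition~\ref{axisrayteich}) between the axis of $\omega_n$ and the ray $\tau_{o,\omega_+}$, but only on $[\epsilon Ln, (1-\epsilon)Ln]$ and only after this work.

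Second, the recurrence you flag as the ``main obstacle'' is handled in the paper not by a block/renewal or ping-pong construction on the walk, but by passing to the \emph{bilateral} walk and using ergodicity of the Bernoulli shift. One shows that the harmonic measure has full support on $\mathcal{PML}$, so the event ``the bilateral geodesic $\gamma_\omega$ has a $2R$-window in $W$ near the basepoint'' has positive $\overline{P}$-measure; Birkhoff's ergodic theorem applied to the shift then gives positive density of such windows along $\gamma_\omega$, which transfers to the unilateral ray $\tau_{o,\omega_+}$ since $\omega_+$ is uniquely ergodic. This bypasses the need for any quantitative law of large numbers for visits of the sample path itself to $N_\zeta U$, and in particular avoids the difficulty that $U$ is a subset of $\T(S)$ rather than of $\Mod(S)$.
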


Different but related recurrence properties for axes of random pseudo-Anosov elements have been obtained independently at the same time by Gadre and Maher in \cite{Gadre-Maher}. 

The proof of this result rests on a 
technical tool (Proposition \ref{axisrayteich}) 
which states that for typical trajectories of the random walk, axes of pseudo-Anosov elements in Teichm\"uller space fellow travel rays from a basepoint.
We refer to \cite{Horbez-Dahmani} to closely related earlier work.

The organization of the paper is as follows. In Section \ref{thick} we use the 
work 
\cite{EL1,EL2} of Minsky and Brock, Canary and Minsky to determine 
a collection of graphs with the property that the thick part of every
hyperbolic mapping torus of genus $g$ is uniformly quasi-isometric to a graph in the collection.

 Section \ref{arrays} is devoted to estimating the first eigenvalues of these graphs as a function of their volume. 
 By the main result of \cite{Man05}, the smallest 
 positive eigenvalue of the thick part of a mapping torus
 with Neumann boundary conditions can  be estimated in the same way. 
Theorem \ref{thm1} follows from this fact and \cite{H16} as explained in 
Section \ref{smallest}.
The proof of Theorem \ref{thm2} for typical mapping tori
is contained in  
Section \ref{typical}. Section \ref{random} is devoted to
studying geometric properties of random walks on the mapping class group, with 
the proof of
Theorem \ref{thm3} as the main goal.

 \bigskip
 
 {\bf Acknowledgement:} We are all very grateful to Juan Souto for
helpful discussions. A version of 
Theorem \ref{thm1} is due independently to  
Anna Lenzhen and Juan Souto  \cite{LS16}.
We are also grateful to Samuel Taylor for pointing out a gap in the proof of Proposition 5.1 of a previous version of the paper.

%%%%%%%%%%%%%%%%%%%%%%%%%%%%%%%%%%%%%%%%%%%%%%%%%%%%%%%%%%%%%%%%%%%%%%%%%%%
%%%%%%%%%%%%%%%%%%%%%%%%%%%%%%%%%%%%%%%%%%%%%%%%%%%%%%%%%%%%%%%%%%%%%%%%%%%

\section{The thick part of a mapping torus}\label{thick}

A closed 
hyperbolic 3-manifold $M$ admits a \emph{thick-thin decomposition}
\[M=M_{\rm thin}\cup M_{\rm thick}.\]
The thin part $M_{\rm thin}$ is the set of all points $x$ with injectivity
radius ${\rm inj}(x)\leq \epsilon$ where 
$\epsilon>0$ is sufficiently small
but fixed, and $M_{\rm thick}=\{x\mid {\rm inj}(x)\geq \epsilon\}$.
For an appropriate choice of $\epsilon$, 
$M_{\rm thick}$ is not empty and connected, and  
$M_{\rm thin}$ is a union of (at most) finitely many 
\emph{Margulis tubes}. Such a Margulis tube is
diffeomorphic to a solid torus, and it is a tubular neighborhood
of a closed geodesic of length at most $2\epsilon$. This geodesic
is called the \emph{core curve} of the tube.

The goal of this section is to establish an understanding
of the geometric shape of the thick part of a hyperbolic mapping torus $M$ of
genus $g$. To such a mapping torus $M$,
Minsky \cite{EL1} associates 
a combinatorial model which is quasi-isometric
to $M$. We use this model 
to construct a graph which is $L$-quasi-isometric to 
$M_{\rm thick}$ for a number $L>1$ only depending on $g$.
These graphs will be used in Section \ref{arrays} and Section \ref{smallest} 
for the proof of Theorem \ref{thm1}.  

Furthermore, under some additional
assumption on $M$, we construct geometrically controlled 
submanifolds in $M_{\rm thick}$ with boundary.
These submanifolds will be used to 
estimate the smallest positive eigenvalue
of random mapping tori.

The results in this section heavily depend on the results in 
\cite{EL1,EL2} of Minsky and Brock, Canary and Minsky. The reader who
is not familiar with the ideas developed in \cite{EL1,EL2} will 
however have
no difficulty to understand the statement of Proposition \ref{model}
which is all what is needed for the proof of Theorem \ref{thm1}.

We begin with introducing the class of graphs we are interested in.
By a graph we always mean a finite connected graph $G$. 
We equip $G$ with a metric so that each edge of $G$ has
length one. An \emph{arc} in a graph $G$ is a connected subgraph 
of $G$ which is homeomorphic to an interval. 
The \emph{length} of the
arc is the number of its edges. The length of an arc is at least one.
If $a$ is an arc of length $k$ then $a$ contains $k-1$ vertices of
valence two and two \emph{endpoints} which are vertices of valence one.
A \emph{circle} is a finite connected graph $L$ with all vertices of valence two.
Then $L$ is homeomorphic to $S^1$. Its length equals the number of 
its edges. We always assume that the length of a circle is at least two.
We say that a subgraph $G_1$ of a graph 
$G$ is \emph{attached} to a subgraph
$G_2$ of $G$ at a vertex $v$ if $G_1\cap G_2=\{v\}$.

\begin{definition}\label{arrayofcircles}
For $h\geq 1$,  
an  \emph{array of circles of depth at most $h$} is
a finite connected graph $G$ of the following
form. $G$ contains a subgraph $L$ which is a circle called
a \emph{base circle}. 
If $h=1$ then $G=L$. 
Otherwise $G$ is obtained from $L$ by attaching to each 
vertex of $L$ an array of circles of depth at most $h-1$. 
The \emph{depth} of an array of circles $G$ is defined
to be the smallest number $h$ so that $G$ is of depth at most $h$. 
\end{definition}

%\begin{figure}[h]
%\begin{center}
%\includegraphics[scale=0.4]{array.pdf}
%\caption{An array of circles with the base circle $L$ of depth 3}
%\label{fig:array}
%\end{center}
%\end{figure}

%Figure \ref{fig:array} shows an example of an array of circles. 

Note that a bouquet of two circles is an array of circles of depth two,
but both circles may be used as the base circle, so the base circle
may not be uniquely determined and hence 
a given graph may admit more than one description as an array of circles.
In the sequel, whenever we speak of an array of circles, we 
assume that one choice of such a description has been made.

Closely related to arrays of circles is a more general class of
graphs which we call   
\emph{generalized
arrays of circles}. These are finite connected graphs whose
construction is by induction on a notion of depth $h$ as follows.

If $h=1$ then $G$ is simply a circle. 
In the case $h\geq 2$ we begin as before with a circle
$L$. Given a vertex $v$ of $L$, we allow to either 
attach to $v$ a generalized
array of circles of depth at most $h-1$, or 
we allow to replace $v$ by a graph consisting
of $2\leq s\leq h$ arcs 
$a_1,\dots , a_s$ of possibly distinct length with disjoint interior and
with the same pair of distinct endpoints. 
If $v_1\not=v_2$ are these endpoints, then the graph 
obtained by identifying $v_1$ and $v_2$ 
 is just the base circle $L$ with $s$ circles attached at $v$.
We call the arcs $a_1,\dots,a_s$ \emph{vertex arcs}, and 
we call the vertex $v$ of $L$ 
which was replaced by $a_1,\dots , a_s$ in this 
way a \emph{blown-up vertex}. 

%Figure \ref{fig:blown-up} shows such a 
%blown-up vertex. 

%\begin{figure}[h]
%\begin{center}
%\includegraphics[scale=0.7]{blown-up-vertex.pdf}
%\caption{A vertex $v$ is blown up to the vertex arcs $a_1, \ldots, a_s$}
%\label{fig:blown-up}
%\end{center}
%\end{figure}

We require furthermore that 
for each blown-up vertex with corresponding set 
$a_1,\dots,a_s$ of vertex arcs, there is 
a decomposition $h=\sum_{i=1}^sm_i$ where $m_i\geq 1$. By induction,
we allow to attach to each interior vertex of an arc $a_i$ a generalized array
of circles of depth at most  
$m_i-1$. This also includes the possibility that this interior vertex
is blown up to $u\leq m_i$ arcs with the same endpoints as described above.

As an example, if the depth of the generalized 
array of circles $G$ equals two then
$G$ is obtained from the base circle $L$ by either attaching to 
a vertex of $L$ a (possibly trivial) circle or 
by replacing the vertex by two arcs of
possibly different 
length, and these possibilities are mutually exclusive.

\begin{proposition}\label{model}
There is a number $L=L(g,\epsilon)>0$ with the following property.
Let $M$ be a hyperbolic mapping torus of genus
$g$. Then $M_{\rm thick}$ is $L$-quasi-isometric to a generalized
array of circles of depth at most $2g-2$.
\end{proposition}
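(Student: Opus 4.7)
The plan is to extract from the combinatorial model of Minsky \cite{EL1} and Brock--Canary--Minsky \cite{EL2} a graph to which $M_{\rm thick}$ is uniformly quasi-isometric, and then recognize it as a generalized array of circles. Since $M$ is the mapping torus of a pseudo-Anosov $\phi\colon S\to S$, the cyclic cover of $M$ associated to the fiber subgroup is a doubly degenerate hyperbolic structure on $S\times\mathbb{R}$ whose two ending laminations are the stable and unstable laminations of $\phi$. The combinatorial model theorem furnishes a bi-Lipschitz homeomorphism from a model manifold $M_\nu$ onto this cover, with constants depending only on $g$. The model $M_\nu$ is assembled from uniformly bounded geometry blocks glued along pants curves, with Margulis tubes inserted along every short geodesic dictated by the Masur--Minsky hierarchy. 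Quotienting by $\langle\phi\rangle$ transfers this description to $M$ itself.

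First I would construct the base circle $L$. The top-level geodesic of the hierarchy in the cover is a quasi-axis of $\phi$ in the curve complex $\mathcal{C}(S)$; its sequence of pants slices is $\phi$-periodic, and modding out by $\langle\phi\rangle$ produces a finite cyclic sequence of pants decompositions. Collapsing each bounded geometry block to a point and each Margulis tube to a meridian loop on its boundary torus, the portion of $M_{\rm thick}$ living over this periodic backbone deformation retracts onto a circle $L$ whose combinatorial length is controlled by the translation length of $\phi$ on $\mathcal{C}(S)$.

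Next I would attach the subsurface hierarchies. At each pants slice vertex of $L$, some collection of curves may be short; each such curve contributes a meridian loop in $M_{\rm thick}$. When a proper essential subsurface $Y\subsetneq S$ inherits a nontrivial sub-hierarchy, the geometry of $M_{\rm thick}$ over this substructure inductively retracts onto a generalized array of circles of smaller depth, which is then attached at the appropriate vertex of $L$. When several parallel strands of a subsurface hierarchy are simultaneously visible at the same slice, the vertex is blown up into several arcs sharing endpoints, matching the vertex-arc construction in the definition of a generalized array.

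Finally, the depth bound follows from complexity monotonicity in the Masur--Minsky hierarchy. Each descent into a proper essential subsurface $Y$ strictly decreases the topological complexity measured by $|\chi(\cdot)|$, and for disjoint essential subsurfaces $Y_1,\ldots,Y_s$ at a blown-up vertex one has $\sum_i|\chi(Y_i)|\leq|\chi(S)|=2g-2$; this is exactly the additivity $h=\sum m_i$ in Definition \ref{arrayofcircles}. Iterating gives depth at most $|\chi(S)|=2g-2$. The main obstacle is bookkeeping: one must verify that all retractions (bounded geometry blocks to points, Margulis tubes to meridians) can be performed with uniform bi-Lipschitz constants depending only on $g$ and $\epsilon$, that the recursive structure of the resulting graph genuinely matches the formal definition of a generalized array of circles rather than a more general recursive graph, and that the hierarchy's depth of subsurface nesting is faithfully reflected in the graph depth at each blown-up vertex.
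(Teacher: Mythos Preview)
Your proposal is correct and follows essentially the same route as the paper: invoke the bilipschitz model of \cite{EL1,EL2} for the cyclic cover, read off a graph from the hierarchy with the main geodesic as base circle and component domains of each simplex giving the blown-up vertex arcs, and use Euler characteristic additivity $\sum_i |\chi(Y_i)| = |\chi(S)| = 2g-2$ for the depth bound. One small terminological slip: in the paper's construction blocks correspond to \emph{edges} of the graph (the generalized array is essentially the dual graph of the block decomposition of $N[0]$), not to points, but this does not affect the quasi-isometry class.
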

\begin{proof}
Let $\hat M$ be the infinite cyclic cover of $M$ 
defined by the fibration $M\to S^1$. 
Its deck group is generated by a 
pseudo-Anosov diffeomorphism $\phi:S\to S$ 
whose mapping torus is $M$.
Fix a homotopy equivalence $S\to \hat M$.  

Part of the main result of \cite{EL1,EL2} can be summarized
as follows.

There exists a \emph{model manifold} $N$ for $\hat M$ which is
homeomorphic to $S\times \mathbb{R}$ and is composed of
combinatorial pieces called \emph{blocks}. This model 
manifold admits an infinite
cyclic group of homeomorphisms compatible with the block
decomposition which is 
generated by a homeomorphism
$\psi:N\to N$. The quotient $N/<\psi>$ is homeomorphic to $M$.

Within $N$ there is a $\psi$-invariant 
subset ${\mathcal U}$ which consists
of open solid tori of the form $U=A\times J$, where
$A$ is an annulus in $S$ and $J$ is an interval in $\mathbb{R}$.
The manifold $N$ is equipped with a $\psi$-invariant
piecewise smooth Riemannian
metric. The induced metric on the boundary 
$\partial U$ of each $U\in {\mathcal U}$ is flat. 
The geometry of the flat torus $\partial U$ is described
by a coefficient $\omega_N(U)\in {\bf H}^2$, where ${\bf H}^2$ is
thought of 
as the Teichm\"uller space of the two-torus (i.e., the space of marked flat metrics on the two-torus).
For $k\geq 1$ 
let ${\mathcal U}[k]$ denote the union of the components of
${\mathcal U}$ with $\vert \omega_N\vert \geq k$ and let
$N[k]=N-{\mathcal U}[k]$. 

The following statement is a combination of 
the Lipschitz Model Theorem and the
Short Curve Theorem as stated in the introduction
of \cite{EL1}, and the Bilipschitz Model Theorem from 
Section 8 of \cite{EL2}.

There exist numbers $K,k>0$ only depending on 
the genus of $S$ but not on the mapping torus $M$, and
there is a $\psi-\phi$-equivariant 
$K$-Lipschitz map $F:N\to \hat M$ with the following
properties.
\begin{enumerate}
\item $F$ induces a marked isomorphism
$\pi_1(N)=\pi_1(S)\to \pi_1(\hat M)$, is proper and has degree one.
\item $F$ is $K$-bilipschitz on $N[k]$ with respect to the 
induced path metric.
\item $F$ maps each component of ${\mathcal U}[k]$ to a Margulis
tube, and each Margulis tube with sufficiently short core curve 
is contained in the image of 
a component of ${\mathcal U}[k]$.
\end{enumerate}

Thus all we need to show is that for the number $k$ in the above
statement, 
$N[k]$ is $L$-quasi-isometric
to a generalized array of circles for a universal number $L>0$.
This statement in turn follows from the construction of the
model manifold which we outline next (we refer to \cite {EL1} for 
all details).

A \emph{clean marking} of the surface $S$ consists of a 
pants decomposition $P$ of $S$,
the so-called \emph{base} of the marking, 
and a set of so-called \emph{spanning curves}. For each pants curve
$c\in P$, there exists a unique spanning curve. This spanning curve is contained
in $S-(P-c)$, and it intersects $c$ transversely in one
or two points depending on whether the component of $S-(P-c)$ containing $c$ is 
a one-holed torus or a four-holed sphere. 

A pants decomposition $P$ of $S$ is \emph{short} in $M$ 
if there is a map $F:S\to M$ in the given homotopy class 
which maps each component of $P$ to a geodesic
in $M$ of uniformly bounded length.

To build the model manifold, start with a pants decomposition $P$
which is short in $M$. Since each point in $M$ is uniformly
near a \emph{pleated surface} $f:(S,\sigma)\to \hat M$ 
(see Theorem 3.5 of \cite{Minsky93} for this result of Thurston),
short pants decompositions exist. 
Namely, such a pleated surface $f$ is a path isometry for 
a hyperbolic metric $\sigma$ on $S$. Furthermore, for every hyperbolic
metric on $S$ 
there is a pants decomposition of uniformly bounded length.

A pants decomposition of $S$ can be viewed as a maximal
simplex in the \emph{curve complex} ${\mathcal C}(S)$ of $S$.
By Theorem 6.1 and 
Theorem 7.1 of \cite{EL1}, we may assume that
a short pants decomposition in $M$ is a simplex in  
${\mathcal C}(S)$ which is uniformly
near (for the distance in ${\mathcal C}(S)$) to 
a curve in any choice of a \emph{hierarchy} constructed
from the \emph{ending laminations} of $\hat M$. These
ending laminations  
are just the supports of the horizontal and 
vertical measured geodesic laminations on $S$ which determine
the axis of the pseudo-Anosov mapping class $\phi$. 

The vertical measured geodesic lamination $\lambda$ of the 
axis of $\phi$ determines 
an essentially unique
clean marking $\mu$ of $S$ with base the given 
pants decomposition $P$. The spanning curves are
determined by the \emph{subsurface projection}
of $\lambda$ into the collars of $P$.
We refer to \cite{MM00,EL1} for details of this construction.

Let $\phi(\mu)$ be the image of $\mu$ under $\phi$. To
$\mu$ and $\phi(\mu)$ we can associate a \emph{hierarchy}  
$H$ and a \emph{resolution} of $H$. This hierarchy 
consists of a collection of so-called \emph{tight
geodesics} in the curve complex of connected subsurfaces of $S$
different
from three-holed spheres. 
The hierarchy is required to be \emph{four-complete}.
This means the following. 

Define the \emph{complexity} $\xi(Y)$ of a connected 
subsurface $Y$ of $S$ of genus $h\geq 0$ with 
$b\geq 0$ boundary components by
$\xi(Y)=3h+b$. Suppose that $Y\subset S$ 
is a complementary component of a vertex in a geodesic $h$ 
from the hierarchy $H$ whose domain is a surface
$Y^\prime\supset Y$. If $\xi(Y)\geq 4$ then 
$Y$ is the domain of a geodesic in $H$.

%a path of clean markings connecting $\mu$ to 
%$\phi(\mu)$. Two adjacent markings in this path differ
%by an \emph{elementary move} (see 
%\cite{MM00} and Section 5.4 of \cite{EL1} for details)
%which alters the marking either by performing 
%a single Dehn twist 
%about a curve in the base of the marking 
%(which only alters a single spanning curve) 
%or by a ``switch move''
%whose support (a so-called \emph{component domain} of the 
%resolution) 
%either is a four-holed sphere or a one-holed torus.

Choose such a four-complete hierarchy $H$ 
associated to $\mu$ and $\phi(\mu)$ as well as a resolution
of $H$. Each edge $e$ in a geodesic from the hierarchy
$H$ whose domain $D(e)$ satisfies $\xi(D(e))=4$
(i.e. $D(e)$ either is a four-holed sphere or a one-holed torus) 
%Each elementary move in the resultion 
%can be thought of as an edge $e$ in the 
%marking graph of the component domain $D(e)$ which is
%the support of the move. 
%Such an edge $e$ 
defines a \emph{block} $B(e)$ for
the \emph{component domain} $D(e)$. The backward endpoint
$e_-$ of $e$ and the forward endpoint $e_+$ can be identified
with a simple closed curve in the component domain $D(e)$ of
distance one in the curve graph of $D(e)$.

 The block $B(e)$ for the edge $e$ is then defined as
\[B(e)=(D(e) \times [-1,1]) - 
({\rm collar}(e_-)\times [-1,-1/2)\cup {\rm collar}(e_+)\times (1/2,1]).\]
The \emph{glueing boundary} of the block $B(e)$ is defined to be
\[\partial_{\pm}B(e)=(D(e)-{\rm collar}(e^{\pm}))\times \{\pm 1\}.\]
This glueing boundary is a union of three-holed spheres.

There are only two combinatorial types of blocks \cite{EL1}. 
Each block can be equipped with a standard Riemannian metric
with totally geodesic boundary 
so that combinatorially equivalent blocks are isometric. 
The blocks are glued along the components of their glueing boundaries
as prescribed by the resolution of the hierarchy $H$ and such that 
 the metrics on the glueing boundaries of 
the blocks match up. 
Let $N[0]$ be these glued blocks (compare \cite{EL1} for notation). 
Then $N[0]$ is 
a Riemannian manifold whose boundary is a union of two-dimensional
tori.

Each boundary torus $\partial U$ contains a distinguished free homotopy class of
simple closed geodesics, the \emph{predicted meridians}.
Such a predicted meridian is given as follows. There is a simple closed
curve $v$ on $S$ so that for any simple arc $a$ on $S$ connecting
the boundary components of a collar neighborhood of $v$, the predicted
meridian equals $\partial(a\times [s,t])$ where the parameters
$s<t$ can be read off from the hierarchy (we refer to p.80 of \cite{EL1} for
details). 
This predicted meridian and the simple closed curve $v$ of length
$t=\epsilon$ (here as before, $\epsilon >0$ is a fixed Margulis constant) 
determine $\partial U$ as a marked flat torus. 
Following Section 3.2 of \cite{EL1}, these data also determine uniquely
a \emph{meridian coefficient} 
$\omega_N(\partial U)\in {\bf H}^2$.  
The length of the predicted meridian of the torus equals
$\epsilon \vert \omega_N(\partial U)\vert$,
and the imaginary
part $\Im \omega$ equals $1/\epsilon$ times 
the sum of the heights of the
annuli that make up $\partial U$ (see p.80 of \cite{EL1}).

Glue solid tori to those boundary tori $\partial U$ with coefficient
$\vert \omega_N(U)\vert \leq k$. Up to isotopy, 
there is a unique way of such a glueing
which maps the meridian of the solid torus to the predicted meridian
of the boundary torus
(compare again \cite{EL1} for details). 
The resulting manifold $N[k]$ 
is the model for $M_{\rm thick}$.
Thus we have to  
verify that indeed, $N[k]$ is $L$-quasi-isometric to a
generalized array of circles of depth at most $2g-2$ 
for some $L>0$ only depending on $g$. Since the diameters
of the tubes in $N[k]-N[0]$ are uniformly bounded, 
for this it suffices to show that $N[0]$ is uniformly
quasi-isometric to a generalized array of circles  
of depth at most $2g-2$.

The pseudo-Anosov map $\phi$ maps
the marked surface
$(S,\mu)$ to $(S,\phi(\mu))$. By equivariance, there is a distinguished
\emph{main tight geodesic} $g_H$ in the hierarchy whose 
\emph{domain} is
the surface $S$ and which connects the marking $\mu$ to 
$\phi(\mu)$.
This tight geodesic consists of a sequence 
of simplices $(v_i)$ in the curve complex
${\mathcal C}(S)$ of $S$ so that for any vertices 
$w_i$ of $v_i$, $w_j$ of $v_j$ we have
$d_{{\mathcal C}_1(S)}(w_i,w_j)=\vert i-j\vert$ (here 
$d_{{\mathcal C}_1(S)}$ is the distance function of the one-skeleton
of ${\mathcal C}(S)$). 
Moreover,
for all $i$, $v_i$ represents the boundary of the subsurface
of $S$ filled by $v_{i-1}\cup v_{i+1}$. 
 
Glue the two ends of the tight geodesic $g_H$ 
using the map $\phi$ and view
it as the base circle $L$ of a generalized array of circles.
The length of 
$L$ equals the length of the tight geodesic $g_H$. 
   
We use the resolution of the hierarchy to construct from $L$ a
generalized array of circles as follows.           
A vertex $v_i$ of $g_H$ 
decomposes $S$ into complementary regions. Such a region $Y$ 
is a \emph{component domain} of the hierarchy.
If $Y$ is a component of $S-v_i$ different from a three-holed sphere
then $v_{i-1}\vert Y$ and 
$v_{i+1}\vert Y$ are either markings of $Y$ or empty
since $v_i$ is the boundary of
the subsurface filled by $v_{i-1}\cup v_{i+1}$ (we ignore here
the modification needed for the first and last simplex). 
In the case that $v_{i-1}\vert Y$ and 
$v_{i+1}\vert Y$ are both markings of $Y$,
the hierarchy contains a tight geodesic with domain $Y$ connecting
these markings. 

As an example, if $Y$ is a connected subsurface of $S$ and if 
the \emph{subsurface
projection} \cite{MM00} 
of $\mu\cup \phi(\mu)$ into $Y$ (which is never empty
since $\mu$ and $\phi(\mu)$ are markings of $S$)
has large diameter, then $Y$ arises as a component domain
in the hierarchy. If $v_i$ 
is the simplex in the hierarchy with $Y$ as component
domain, then the 
adjacent simplices $v_{i-1}$ and $v_{i+1}$ 
in the geodesic of the hierarchy
containing $v_i$ fill $Y$ \cite{MM00}. 
Thus there is a geodesic in the 
hierarchy with domain $Y$, and the
length of this geodesic coincides 
with the diameter of this subsurface projection up to a universal
additive constant (Lemma 5.9 of \cite{EL1} summarizes this 
result from \cite{MM00}).
Even if
$v_{i-1}$ and $v_{i+1}$ do not intersect the component 
$Y$ of $S-v_i$, if $Y$ is different from a three holed sphere then 
there is a geodesic in the hierarchy whose domain equals $Y$. 
As this will not be important for our purpose, we refer 
to \cite{EL1} for a discussion of these technicalities.
%Such a geodesic 
%consists of a chain of elementary moves, forming a uniform
%quasi-geodesic in the marking complex of $Y$.

For each $i$ let $u_i$ be the vertex of the circle 
$L$ corresponding to the simplex $v_i$.
Let $Y_1,\dots,Y_s$ be the complementary regions of $v_i$ 
in $S$. Let $a_i$ be the length of the tight
geodesic in the hierarchy 
with domain $Y_i$. If $Y_i$ is a three-holed sphere
then we define $a_i=1$. 
Blow up
the vertex $u_i$ and replace it by $s$ arcs
of length $a_i$. Moreover, associate to the arc $a_i$
the absolute value $-\chi(Y_i)$ 
of the Euler characteristic of $Y_i$. 
Note that $-\sum \chi(Y_i)=-\chi(S)=2g-2$.

Each arc $a_i$ corresponds to a tight geodesic
in a surface $Y_i$ of Euler characteristic $\chi(Y_i)$. 
Repeat the above construction with these arcs, 
successively blowing up vertices. Inductively, this defines
a generalized array of circles of depth at most $2g-2$.

To summarize, from the mapping torus we obtain
(non-uniquely) a hierarchy $H$ and a resolution of $H$. 
The resolution is used to construct
a generalized array of circles $G$ of depth at most $2g-2$. 
There is natural
map $\Psi:N[0]\to G$ which maps a block in $N[0]$ to an
edge of $G$. 

We are left with showing that this generalized array of circles
indeed is uniformly quasi-isometric to $N[0]$. To this
end simply recall that in the situation at hand, there are only
two types of blocks \cite{EL1}. The first type of blocks
is obtained from a component domain 
$D$ which is a one-holed torus.  
In the construction of the generalized array $G$, 
a geodesic $\eta$ in the hierarchy $H$ with a one-holed torus as 
component domain gives rise to an
outmost arc, i.e. an arc of biggest depth. 
In the model manifold, it corresponds to a chain of blocks
whose length equals the length of $\eta$. 

In the natural order of blocks in the chain given by
an orientation of the Teichm\"uller geodesic which defines
the mapping torus, 
the top component
of the glueing boundary of the 
last block in the chain consists of one three-holed sphere.
This sphere is glued to the glueing boundary of a block $B$ 
which arises from a different geodesic of the hierarchy.
The block $B$ is of the second type, obtained from a
component domain which is a four-holed sphere. 
Then the second three-holed sphere in the
glueing boundary of the block $B$ lying on the same ``side'' 
is glued to a block
arising from a different geodesic in the hierarchy. This 
three-holed sphere may already
be present at the initial point of the geodesic.

In the generalized
array of circles, this corresponds precisely to glueing the endpoints
 of the arc representing the geodesic to the endpoints of another
 arc.
 The discussion of geodesics in $H$ whose component domains are four-holed
 spheres is completely analogous and will be omitted. 
 
 As a consequence, the map which associates to a block in $N[0]$ 
 the edge of the generalized array of circles $G$ corresponding to it
 is essentially the map which associates to the decomposition
 of $N[0]$ into blocks the dual graph. The deviation from this
 precise picture comes from the addition of some additional
 edges in $G$, one for each "reassembling point", to meet the
 requirement of a generalized array of circles which we found
 most useful for our purpose. 
 The proposition is proven.
\end{proof}

\begin{remark}\label{nonseparating}
The above construction also yields the following. Assume that for some
$\delta>0$, 
$M$ is a mapping torus defined by a pseudo-Anosov mapping class
whose axis is entirely contained in the 
$\delta$-thick part of Teichm\"uller space.
Then $M$ is $L(\delta)$-quasi-isometric to a circle where
$L(\delta)>0$ only depends on $\delta$. 

More generally, let $M$ be 
a hyperbolic mapping torus of 
genus $g$ which is defined by a pseudo-Anosov mapping class
$\phi$ of translation length $\ell$ in Teichm\"uller space.
Let us assume that for some fixed
number $c_1\in (0,1)$, 
the translation length for the action of 
$\phi$ on the curve graph of the surface of genus $g$
is at least $c_1\ell$. Then the length of the base circle of the
array of circles constructed from $M$ in the proof of
Proposition \ref{model} is at least 
$c_2{\rm vol}(M_{\rm thick})=c_3{\rm vol}(M)$  
where $c_2,c_3>0$ only depend on $c$ and $g$ (compare
the discussion in \cite{H16} for a comparison between
${\rm vol}(M_{\rm thick})$ and ${\rm vol}(M)$).
\end{remark}

\begin{remark} \label{tubecoefficients}
From the model and the construction of a generalized
array of circles $G$, we obtain some information of 
the size of the Margulis tubes in the mapping torus $M$. Namely, 
Margulis tubes with boundary of large volume correspond
to blown-up vertices in the construction of $G$ with 
at least one long vertex arc. 

However, large subsurface projections into the complement
of a non-separating simple closed curve on $S$ give
rise to Margulis tubes which can not be detected in the
generalized array of circles. Thus the thick-thin
decomposition of $N$ can not be read off from the 
generalized array of circles.

As a consequence, a base circle of length
$\sim {\rm vol}(M)$ does not imply that translation
length for the action of the corresponding pseudo-Anosov
element on the curve graph 
is proportional  to its translation length on 
Teichm\"uller space.
\end{remark}

For the proof of the second part of Theorem \ref{thm1} we construct
collections of mapping tori with fibre genus $g$ which are uniformly
quasi-isometric to specific generalized arrays of circles. 

We first introduce the arrays of circles we are interested in.
Namely, let $G$ be an array of circles with base circle $L$ and depth $h$.
Define the \emph{depth} of a circle $C$ in $G$ 
as the minimal depth of an array of circles $G^\prime\subset G$ with base circle $L$ which 
contains $C$. Thus each circle in an array of circles 
of depth $h$ has depth at most $h$, and the base circle is the unique 
circle of 
depth one.

Call an array of circles $G$ \emph{step-homogeneous} if 
all circles in $G$ of the same depth are non-degenerate
and of the same length. 
A special example of a step-homogeneous array of circles is 
an array where for some $k\geq 2$, the circles of 
depth $\ell$ have length $k^{2^{\ell-1}}$. We call this 
array \emph{optimal}. Note that an optimal array of circles
is uniquely determined by the length $k$ of its base circle
and by its depth.

\begin{proposition}\label{example}
For each $g>0$ there is a number 
$c_1(g)>0$ with the following properties.
For $k\geq 2$ let $G$ be an 
optimal step-homogeneous array of circles  
of depth $2g-2$, with base circle of length $k$.
Then there is a mapping torus
$M$ of genus $g$ so that $M_{\rm thick}$ is $c_1(g)$-quasi-isometric to $G$.
\end{proposition}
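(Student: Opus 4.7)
\emph{Plan.} I plan to construct a pseudo-Anosov $\phi \in \Mod(S)$ whose hyperbolic mapping torus $M$ admits, via the construction in the proof of Proposition \ref{model}, a four-complete hierarchy $H$ whose associated generalized array of circles coincides, up to a multiplicative constant depending only on $g$, with the prescribed optimal step-homogeneous array $G$ of depth $2g-2$ with base circle of length $k$. The conclusion then follows immediately from Proposition \ref{model}.

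\emph{Construction of $\phi$.} First I fix a rooted tree $\mathcal{Y}$ of essential subsurfaces of $S$ by iteratively cutting along non-peripheral simple closed curves: the root at depth $1$ is $S$ itself, and the children at depth $\ell+1$ of a non-pants node $Y$ are the essential complementary components of a chosen curve $c_Y \subset Y$. Choosing cuts so that the total absolute Euler characteristic drops by exactly one at each step yields $\mathcal{Y}$ of depth precisely $2g-2$. For each node $Y$ at depth $\ell$ I then pick a pseudo-Anosov $\psi_Y$ supported in $Y$, obtained for instance by Thurston's construction on two filling multicurves of $Y$, whose translation length in the curve graph $\mathcal{C}(Y)$ is comparable to $k^{2^{\ell-1}}$. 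I set $\phi = \prod_{Y \in \mathcal{Y}} \psi_Y$ in a fixed order, say deepest first. Provided the powers are chosen sufficiently large in terms of $g$, $\phi$ is pseudo-Anosov by the standard criterion using the $\Mod(S)$-action on projective measured laminations, so its mapping torus $M$ is hyperbolic.

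\emph{Hierarchy analysis.} Next I pick a clean marking $\mu$ whose base pants decomposition is short in $M$, and consider the four-complete hierarchy $H$ associated to $(\mu, \phi(\mu))$ as in the proof of Proposition \ref{model}. Using the Behrstock inequality and the bounded geodesic image theorem of \cite{MM00}, I verify that the subsurface projection distance $d_Y(\mu, \phi(\mu))$ is comparable to $k^{2^{\ell-1}}$ precisely when $Y$ is a node at depth $\ell$ of $\mathcal{Y}$, and uniformly bounded otherwise. Consequently every long tight geodesic of $H$ corresponds to a node of $\mathcal{Y}$ with matching domain, and its length matches the prescribed doubly-exponential profile $k^{2^{\ell-1}}$. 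The generalized array of circles produced by the construction of Proposition \ref{model} is then edge-by-edge bi-Lipschitz equivalent to $G$ with constants depending only on $g$, and Proposition \ref{model} gives that $M_{\rm thick}$ is $c_1(g)$-quasi-isometric to $G$.

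\emph{Main obstacle.} The technical heart is the hierarchy analysis: controlling simultaneously, at every node of $\mathcal{Y}$ and every depth, both that $d_Y(\mu, \phi(\mu)) \asymp k^{2^{\ell-1}}$ and that no unexpected long subsurface projections arise on subsurfaces outside $\mathcal{Y}$. I expect to set this up as an induction on depth, bounding the contributions of ancestors and cousins in $\mathcal{Y}$ via the Behrstock inequality and tuning the powers of the $\psi_Y$ so that the iterated subsurface projection estimates compound correctly into the required doubly-exponential profile.
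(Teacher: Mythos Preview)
There is a genuine gap. Your hierarchy analysis is plausible: the product $\phi=\prod_{Y\in\mathcal Y}\psi_Y$ over the chain $S=Y_1\supset Y_2\supset\dots\supset Y_{2g-2}$ does have large subsurface projection precisely to the $2g-2$ surfaces $Y_\ell$ and bounded projection elsewhere. But the conclusion that the resulting generalized array of circles is bi-Lipschitz to the optimal step-homogeneous array $G$ is wrong. In $G$, \emph{every} vertex of the base circle (of length $k$) carries a circle of length $k^2$; every vertex of each of those carries a circle of length $k^4$; and so on. Thus $G$ has about $k^{2^{\ell-1}-1}$ circles at depth $\ell$ and total volume $\asymp k^{2^{2g-2}-1}$. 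Your $\phi$ produces a main geodesic of length $\asymp k$ but a long depth-$2$ geodesic only at the \emph{single} vertex $\partial Y_2$, since $Y_2$ is the unique proper subsurface with large projection; all other component domains along the main geodesic have bounded projection. Iterating, you obtain a chain: one long circle at each depth, attached at a single vertex of the previous one. Its volume is $\asymp k+k^2+\dots+k^{2^{2g-3}}\asymp k^{2^{2g-3}}$, so it cannot be uniformly quasi-isometric to $G$ (already for $g=2$ the volumes are $k^2$ versus $k^3$).

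The paper repairs exactly this by replacing your simple product with a \emph{nested power}: one sets $\eta_1=\phi_{2g-3}$ and inductively $\eta_m=\phi_{2g-2-m}\circ\eta_{m-1}^{\ell(m-1)}$. The point of raising $\eta_{m-1}$ to a large power is that its quasi-axis in $\mathcal C(S_{2g-1-m})$ passes near $\partial S_{2g-m}$, so after iteration the hierarchy geodesic in $S_{2g-1-m}$ visits many distinct translates $\eta_{m-1}^j(\partial S_{2g-m})$, and \emph{each} such translate becomes a component domain carrying its own long geodesic at the next depth. This is what populates every vertex with an attached circle and produces the step-homogeneous profile; your construction creates the deep projection only once per level, which is not enough.
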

\begin{proof}
Choose a decomposition of $S$ as a descending sequence of 
connected subsurfaces $S=S_0\supset S_1\supset \dots \supset S_{2g-3}$ 
with the following properties. 
\begin{enumerate}
\item $\chi(S_i)=2-2g+i$.
\item $S_{2g-3}$ is a one-holed torus.
%\item For each $i$, $S_i$ is a union of components of $S-P$.
\end{enumerate}
Such a chain can for example be constructed as follows.
Choose a simple closed curve $\alpha_1$ which decomposes $S$ into a one-holed
torus and a surface $S_1$ of genus $g-1$ with connected boundary.
Choose a pair $(\alpha_2,\alpha_2^\prime)$ of simple closed curves which decompose
$S_1$ into a three-holed sphere and a surface $S_2$ of genus $g-2$ with 
two boundary components. One proceeds inductively by decomposing $S_i$ into a three-holed sphere and 
a surface $S_{i+1}$ until $S_{i+1}$ becomes a one-holed torus. Since a one-holed torus has Euler characteristic -1, 
the chain ends with the index $2g-3$. Let ${\mathcal C}(S_i)$ be the curve complex of $S_i$. 

The union of the boundary circles of the surfaces $S_i$ 
is a pants decomposition $P$
of $S$. Let $\tau$ be a \emph{train track} in standard form for $P$.
We can choose $\tau$ in such a way that 
it restricts to a train track in standard form on each of 
the subsurfaces $S_i$ (here we have to be a bit
careful what this means as $S_i$ has boundary). For 
terminologies regarding train tracks, 
we refer the readers to \cite{PH92}. In particular, one can find 
the definition of train track in standard form
for a pants decomposition of $S$ and the proof 
of its existence in Sections 2.6 and 2.7 of \cite{PH92}.

For a subsurface $Y$ of $S$, recall that the mapping class
group ${\rm Mod}(Y)$ of $Y$ consists of isotopy classes
of diffeomorphisms of $Y$ which fix the boundary of $Y$ pointwise.
For each of the subsurfaces $S_i$ $(0\leq i\leq 2g-3)$
choose once and for all 
a pseudo-Anosov mapping class $\phi_iÂž\in {\rm Mod}(S_i)$ 
with the following properties.
\begin{enumerate}
\item $\phi_i$ admits $\tau\vert S_i$ as a \emph{train track expansion}.
\item $d_{{\mathcal C}_1(S_i)}(c,\phi_i(c))\geq 5$ for every 
simple closed curve $c$ on $S_i$.
\end{enumerate}
Here as before, $d_{{\mathcal C}_1(S_i)}$ is the distance
in the one-skeleton of the curve complex of the
subsurface $S_i$.

The existence of $\phi_i$ satisfying (1) is a consequence of 
the fact that $\tau\vert S_i$ is maximal birecurrent 
which follows from the requirement that 
 $\tau\vert S_i$ is in the standard form 
(as shown in \cite{PH92}). (2) can be easily satisfied by first taking some 
$\phi_i$ satisfying (1) and replacing it by some positive power. 
Namely, by \cite{MM99}, 
for any pseudo-Anosov map $f$ on a non-exceptional surface $\Sigma$ and 
for any simple closed curve $\alpha$ on the surface, 
the limit $\lim_{n \to \infty} \dfrac{d_{\mathcal{C}_1(\Sigma)}(f^n(\alpha), \alpha)}{n}$ exists and positive, 
and it is independent of the choice of $\alpha$. 

Fix a number $k>1$. We define inductively a pseudo-Anosov mapping class
$\Psi_k\in {\rm Mod}(S)$ as follows. For $m\geq 1$ 
write $\ell(m)=k^{{2^{2g-2-m}}}$. Define $\eta_1=\phi_{2g-3}$ and 
inductively let 
\[\eta_m= \phi_{2g-2-m}\circ \eta_{m-1}^{\ell(m-1)}.\]  
Then for each $m$, $\eta_m$ is a pseudo-Anosov diffeomorphism
of $S_{2g-2-m}$. 
The mapping class $\Psi_k=\eta_{2g-2}$ is pseudo-Anosov, with 
train track expansion $\tau$. We refer to Section 6 of 
\cite{Hamenstaedt16} for details of this construction.

Now consider the mapping torus $M_k$ of $\Psi_k$. 
As a hierarchy for $M_k$ is defined by 
singling out subsurfaces $Y$ of $S$ so that the subsurface
projections of the vertical and horizontal measured geodesic
laminations of the axis of $\Psi_k$ is large, 
it follows from Proposition \ref{model} (and its proof) that
$(M_k)_{\rm thick}$ is $L$-quasi-isometric to an optimal
step-homogeneous array
of circles of base length $k$ for a constant $L>1$ not
depending on $k$.

To give a more detailed account on this fact, 
the main geodesic of the hierarchy
corresponds to a fundamental domain in a quasi-axis in ${\mathcal C}(S)$
for the pseudo-Anosov mapping class $\phi_0^k$. This main geodesic
determines  
the base circle $L$ of the generalized
array of circles in the construction from the proof of 
Proposition \ref{model}. The length of this base circle  
is uniformly equivalent to $k$.

In a second step, the hierarchy contains geodesics in the curve graph of 
copies of the surface $S_1$ whose length is equivalent to $k^2$.
In the construction of the generalized array of circles, this
amounts to blowing up each vertex $v$ of $L$ and replacing it by
a single edge and an arc of length equivalent to $k^2$ with the
same endpoints.  
Let $G_1$ be the resulting graph. 
Contraction of each vertex arc in $G_1$ which consists of a single edge
yields an array of circles which is 
uniformly quasi-isometric to $G_1$. This array of circles 
is uniformly quasi-isometric to an optimal array of circles 
of depth two. 
By induction, we conclude that
indeed, the thick part of $M_k$ is uniformly quasi-isometric to an optimal
array of circles of depth $2g-2$. The proposition follows. 
\end{proof}

We use Proposition \ref{model} and its proof to
obtain some additional geometric information on doubly degenerate
hyperbolic 3-manifolds which are used in the proof of Theorem \ref{thm2}. 

Let $\gamma\subset {\mathcal T}(S)$ be a bi-infinite Teichm\"uller geodesic
which defines a doubly degenerate hyperbolic 3-manifold $M$ with 
filling end invariants. Suppose that for some $\epsilon >0$ the geodesic
contains a subarc $\gamma[a,b]$ entirely contained in
the $\epsilon$-thick part  
${\mathcal T}(S)_\epsilon$ of ${\mathcal T}(S)$ of all marked
hyperbolic surfaces of injectivity radius at least $\epsilon$.
By \cite{MM99} (see \cite{Hamenstaedt10} for an
explicit statement), there is a number $\chi=\chi(\epsilon)>0$
so that the map which associates to 
$t\in [a,b]$ a closed geodesic of smallest length on 
the hyperbolic surface $\gamma(t)$ 
is a $\chi$-quasi-geodesic in the curve complex
${\mathcal C}(S)$ of $S$. Therefore 
the endpoints $\gamma(a),\gamma(b)$ define
(non-uniquely) a hierarchy $H$ 
all of whose geodesics different from the main geodesic have 
uniformly bounded length
\cite{MM00}. Moreover, if $b-a$ is sufficiently large
then the length of the main geodesic is larger than
any prescribed threshold. 

Let $v$ be any vertex in the main geodesic associated
to $\gamma[a,b]$. 
We are only interested in vertices not too close
to the endpoints of the hierarchy $H$. Such a vertex is a multicurve in $S$
whose length becomes short along $\gamma[a,b]$, say at $\gamma(s)$.  By 
Lemma 7.9 of \cite{EL1}, the lengths of the closed geodesics in $M$
in the free homotopy classes  of the components of $v$ is uniformly bounded. 
There is a pleated surface $f:S\to M$ which maps the curves from 
a maximal simplex $\Delta\subset {\mathcal C}(S)$ with
$v\subset \Delta$ to geodesics in $M$, and these geodesics all have
moderate length in $M$.
Moreover, the set of simple closed curves on the pleated surface which 
have uniformly bounded length in $M$ is contained in the $d^\prime$-neighborhood 
of $v$ in ${\mathcal C}(S)$ 
for some universal number $d^\prime >0$.

By the tube penetration Lemma 7.7 of \cite{EL1}, 
there is a number $r>0$ only depending on $\epsilon$ and $g$ 
with the following property. Let $s\in [a+r,b-r]$ and let $v\in H$ 
be a vertex corresponding to a multicurve which becomes
short for $\gamma(s)$; then the diameter of a
pleated surface mapping $v$ geodesically is uniformly bounded.

Theorem 6.2 of \cite{EL2} now shows that up to enlarging $r$, such a pleated
surface $f:(S,\sigma)\to M$ can be deformed with a homotopy to an 
embedding $F:(S, \sigma)\to M$ with the following properties.
\begin{enumerate}
\item $F(S)$ is contained in the $r$-neighborhood of $f(S)$.
\item The second derivatives of $F$ 
are uniformly bounded. 
\end{enumerate}

We use this to show

\begin{proposition}\label{control}
For every $\epsilon >0$ there exists 
a constant $c_2=c_2(g,\epsilon)>0$ with
the following property. 
Let $\hat M$ be a doubly degenerate hyperbolic 3-manifold
which is an infinite cyclic cover of a mapping torus $M$ of genus $g$.
Suppose that the Teichm\"uller geodesic $\gamma$ defining $M$ contains
a segment $\gamma[a,b]\subset {\mathcal T}(S)_\epsilon$ of 
length $b-a\geq 2c_2$. Then $\hat M$
contains a smooth embedded 3-manifold $N_0$ with boundary  $\partial N_0$ with 
the following properties.
\begin{enumerate}
\item $\partial N_0=\Sigma_{a}\cup \Sigma_{b}$, and there are diffeomorphisms 
$f_1:(S,\gamma(a+c_2))\to \Sigma_{a},f_2:(S,\gamma(b-c_2))\to \Sigma_{b}$ whose
derivatives are uniformly bounded.
\item There is a smooth surjective map $N_0\to [a,b]$ of uniformly bounded
derivatives.
\end{enumerate}
If $\gamma[a^\prime,b^\prime]\subset {\mathcal T}(S)_\epsilon$ is another such segment
so that $[a,b]\cap [a^\prime,b^\prime]=\emptyset$ then the corresponding
$3$-manifolds $N_0,N_0^\prime$ are disjoint. 
\end{proposition}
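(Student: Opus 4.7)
The plan is to combine the pleated-surface toolkit from the discussion preceding the proposition with the smoothing procedure of Theorem~6.2 of \cite{EL2}: first use it to upgrade two pleated boundary surfaces to smooth embeddings, then use it again to interpolate between them by a smooth family. Set $c_2 = c_2(g,\epsilon)$ to be at least the tube penetration constant $r$ from Lemma~7.7 of \cite{EL1}, enlarged later as needed; under the hypothesis $b-a \geq 2c_2$ the points $s_1 := a+c_2$ and $s_2 := b-c_2$ lie in the interior of the thick subsegment. For $i=1,2$, choose a vertex $v_i$ of the main geodesic of the hierarchy $H$ whose shortening parameter along $\gamma$ is nearest to $s_i$. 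By the tube penetration lemma and Lemma~7.9 of \cite{EL1}, there is a pleated surface $f_i:(S,\sigma_i)\to \hat M$ realizing $v_i$ geodesically, of uniformly bounded diameter, with $\sigma_i$ at uniformly bounded Teichm\"uller distance from $\gamma(s_i)$. Theorem~6.2 of \cite{EL2} deforms $f_i$ within its $r$-neighborhood to a smooth embedding $F_i:(S,\sigma_i)\to \hat M$ whose first and second derivatives are uniformly bounded; composing with a smooth diffeomorphism $(S,\gamma(s_i))\to(S,\sigma_i)$ of uniformly bounded derivatives (provided by the uniform bilipschitz comparison of these two marked hyperbolic metrics) yields the diffeomorphisms $f_1, f_2$ of conclusion~(1), and defines $\Sigma_a := F_1(S)$, $\Sigma_b := F_2(S)$.

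Next, sweep out $\gamma[s_1,s_2]$ by a one-parameter family of pleated surfaces $\{g_t\}_{t\in [s_1,s_2]}$ in $\hat M$, each realizing a shortest multicurve on $\gamma(t)$. In $\mathcal{T}(S)_\epsilon$ the pleating loci form a uniform quasi-geodesic in the curve graph by \cite{MM99}, and each pleated surface has uniformly bounded diameter, so $\{g_t\}$ depends Lipschitz-continuously on $t$ away from the finitely many parameters at which the pleating locus changes. A further application of Theorem~6.2 of \cite{EL2}, used to smooth each transition locally, produces a smooth map $\Phi:S\times[s_1,s_2]\to \hat M$ whose restrictions to $S\times\{s_1\}$ and $S\times\{s_2\}$ agree with $F_1$ and $F_2$ and whose derivatives are uniformly bounded. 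That $\Phi$ is in fact an embedding follows from the Bilipschitz Model Theorem of \cite{EL2}: under the bilipschitz identification of $\hat M$ with the model manifold, the pleated surfaces at the two ends sit in blocks whose heights differ by $s_2-s_1=(b-a)-2c_2$, which exceeds the sum of their uniform diameters once $c_2$ is taken large enough, so the family is monotone in the height coordinate. Set $N_0 := \Phi(S\times[s_1,s_2])$; then $\partial N_0 = \Sigma_a\cup \Sigma_b$, and the composition of $\Phi^{-1}$ with projection to $[s_1,s_2]$ and any fixed Lipschitz reparameterization $[s_1,s_2]\to[a,b]$ gives the desired smooth surjection with uniformly bounded derivatives.

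Finally, if $[a',b']$ is another thick subsegment disjoint from $[a,b]$, the heights of the associated product regions in the model manifold are separated; enlarging $c_2$ one last time to dominate a uniform pleated-surface diameter ensures that $N_0$ and $N_0'$ are disjoint in $\hat M$. The main obstacle is the sweep-out construction: pleated surfaces jump discretely as the pleating locus changes along $\gamma$, so producing a smooth parameterization with \emph{uniformly} bounded derivatives requires handling each transition by the local smoothing of Theorem~6.2 of \cite{EL2} while preserving the global geometric control inherited from the thick-part assumption; the uniformity of the resulting bounds is essentially guaranteed by the Bilipschitz Model Theorem.
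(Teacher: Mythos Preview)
Your proof uses the same toolkit as the paper---pleated surfaces coming from the hierarchy, the tube penetration lemma, and the smoothing of Theorem~6.2 of \cite{EL2}---and the overall strategy is the same. The difference is in how the region between $\Sigma_a$ and $\Sigma_b$ is filled. The paper does not attempt a continuous sweepout by pleated surfaces; it simply takes \emph{finitely many} smoothed pleated surfaces (one per vertex of the main geodesic along $\gamma[a,b]$), observes that consecutive ones cobound product regions $S\times[0,1]$ of uniformly bounded diameter, and assembles the map to $[a,b]$ piecewise, using the controlled tubular neighborhoods of the embedded boundary surfaces to smooth across the junctions. This sidesteps entirely the obstacle you flag at the end.

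Your appeal to Theorem~6.2 of \cite{EL2} to ``smooth each transition locally'' in a one-parameter family is the weak point: that theorem upgrades a \emph{single} pleated surface to a controlled embedding and says nothing about families or about interpolating between two such embeddings. The interpolation you want is available, but by the more elementary mechanism the paper actually uses---two nearby incompressible embedded surfaces of bounded geometry in $\hat M\cong S\times\mathbb{R}$ cobound a product region, and the bounded geometry supplies controlled collar neighborhoods on which the height function can be adjusted. With that step reworded your argument goes through, but the paper's discrete-pieces formulation is both simpler and better matched to what the cited theorem actually delivers.
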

\begin{proof}
The above discussion implies that $M$ contains a submanifold which is 
a union of pieces diffeomorphic to $S\times [0,1]$ whose boundaries
are deformations of pleated surfaces determined by hyperbolic metrics
along $\gamma[a,b]$. The number of these pieces is proportional to 
$b-a$. Disjoint intervals in the parameter space of $\gamma$ as
in the proposition give rise to disjoint pieces. 

The pieces are of uniformly bounded diameter, and their
boundary surfaces have uniformly bounded geometry. 
Then there is a controlled tubular neighborhood of each of these boundary
surfaces, and these neighborhoods can be used to construct the map 
onto $[a,b]$.  
\end{proof}

%%%%%%%%%%%%%%%%%%%%%%%%%%%%%%%%%%%%%%%%%%%%%%%%%%%%%%%%%%%%%%%%%%%%%%%%%%%
%%%%%%%%%%%%%%%%%%%%%%%%%%%%%%%%%%%%%%%%%%%%%%%%%%%%%%%%%%%%%%%%%%%%%%%%%%%

\section{Arrays of circles}\label{arrays}

The main result of 
\cite{Man05} states the following. Let $M$ be a closed Riemannian
manifold of bounded geometry whose injectivity radius is
bounded from below by a fixed positive constant. 
If $M$ is uniformly quasi-isometric to 
a finite graph $G$ then the smallest positive 
eigenvalue of $M$ is uniformly
equivalent to the smallest positive eigenvalue of $G$. This statement
is also valid without modification for compact manifolds $M$ with
boundary and Neumann boundary conditions (see \cite{LS12} for 
a more precise statement).

Let us consider as before a hyperbolic mapping torus $M$ of genus $g\geq 2$.
We showed in Section \ref{thick} that the thick part $M_{\rm thick}$ of $M$ 
is $L=L(g)$-quasi-isometric
to a generalized array of circles of depth at most $2g-2$. 
Thus to estimate the smallest eigenvalue of $M_{\rm thick}$
with Neumann boundary conditions it suffices to
estimate the smallest eigenvalue of a generalized array of circles
of a given depth. The purpose of this section is to establish
such an estimate.

Let for the moment $G$ be any finite connected graph with vertex set
${\mathcal V}(G)$ and edge set ${\mathcal E}(G)$. 
Denote by
${\mathcal F}_0(G)$ the vector space of functions
\[f:{\mathcal V}(G)\to \mathbb{R}\]
with the property that $\sum_vf(v)=0$. 
We equip ${\mathcal F}_0(G)$ with the usual $\ell^2$-inner product
\[(f,h)=\sum_v f(v)g(v).\]

For each such function $f$, the \emph{Rayleigh quotient}
${\mathcal R}(f)$ is defined
by \[{\mathcal R}(f)=\frac{\sum_v\sum_{w\sim v}(f(w)-f(v))^2/p(v)}
{\sum_v f^2(v)}\]
where $p(v)$ is the degree of the vertex $v$, and $v \sim w$ means 
that $v$ and $w$ are connected by an edge. 
The first eigenvalue of $G$ is defined as
\[\lambda_1(G)=\inf\{{\mathcal R}(f)\mid 0\not=f\in {\mathcal F}_0(G)\}.\]

In the sequel we adopt analytic notations, and we write
\begin{equation}\label{first}
\int ( f^\prime)^2=\sum_v\sum_{w\sim v}(f(w)-f(v))^2/p(v)\end{equation}
and $\int f^2=\sum_vf^2(v)$.

Throughout the rest of this section
we view a graph $G$ as a metric space with edges of length one.
Thus the length of a subarc of $G$ equals its combinatorial length,
i.e. the number of its edges.

Our first goal is to 
establish an upper bound for the first eigenvalue 
of an array of circles. In a second step, we then
extend the bound to a generalized array.

We will make use of the
Minmax-principle which is equally valid for the Laplacian on 
manifolds as well as for the Laplacian on graphs. 
For a finite graph $G$ it states the 
following.

Let $\rho_0,\rho_1:{\mathcal V}(G)\to \mathbb{R}$ be any two 
nontrivial functions
with disjoint support; then 
\[\lambda_1(G)\leq \max_{i=0,1}{\mathcal R}(\rho_i).\]

\begin{proposition}\label{graphfinished}
Let $G$ be an array of circles of depth $h$; then
\[\lambda_1(G)\leq 64\pi^2/ {\rm vol}(G)^{2^h/(2^{h}-1)}.\]
\end{proposition}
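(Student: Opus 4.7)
The plan is to prove the bound by induction on the depth $h$, using the shorthand $\alpha_h = 2^h/(2^h-1)$. The base case $h=1$ is a circle of length $V$, where the test function $f(v_j) = \sin(2\pi j/V)$ gives the standard estimate $\lambda_1(G) \leq 4\pi^2/V^2 = 4\pi^2/V^{\alpha_1}$. For the inductive step, let $G$ have base circle $L = (v_1, \ldots, v_\ell)$ of length $\ell$ with attached sub-arrays $G_1, \ldots, G_\ell$ of depth at most $h-1$ and volumes $N_j$, so $V = \ell + \sum_j N_j$. I split into two cases at the threshold $\ell = V^{1/(2^h-1)}/8$, chosen via the key identity $(2^h-2)\alpha_{h-1}/(2^h-1) = \alpha_h$, which makes the two cases produce the same exponent.

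In Case A ($\ell \geq V^{1/(2^h-1)}/8$, long base circle), I build a test function on $G$ starting from a trigonometric function $a_j$ on $L$ and extending it constantly to each attached sub-array, so $f(w) = a_j$ for every $w \in G_{v_j}$. Since $f$ is constant on each attached sub-array, edges inside those sub-arrays contribute zero to the numerator, so the Rayleigh quotient on $G$ reduces to one for the weighted Laplacian on $L$ with vertex weights $w_j = v(G_{v_j})$. Taking $a_j$ in the two-dimensional span of $\sin(2\pi j/\ell)$ and $\cos(2\pi j/\ell)$ and imposing the weighted mean-zero condition $\sum_j w_j a_j = 0$, the numerator is at most $2\pi^2/\ell$. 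The main technical step is to show, via a phase-averaging argument (exploiting the freedom to shift the sinusoid and combine $\sin$ with $\cos$), that the denominator $\sum_j w_j a_j^2$ is bounded below by a uniform positive fraction of $W = \sum_j w_j = n(G) \geq V/2$, independently of how the weights are distributed along $L$. This yields $\mathcal{R}_G(f) \leq C/(\ell V) \leq 64\pi^2/V^{\alpha_h}$.

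In Case B ($\ell < V^{1/(2^h-1)}/8$, short base circle), pigeonhole gives some sub-array $G_{j_0}$ with volume $N_{j_0} > V/(2\ell) > 4V^{(2^h-2)/(2^h-1)}$. Applying the inductive hypothesis to $G_{j_0}$ and using the exponent identity above gives $\lambda_1(G_{j_0}) \leq 16\pi^2/V^{\alpha_h}$. To lift this to $G$, I take a first eigenfunction $\phi$ on $G_{j_0}$ normalized so that $\int_{G_{j_0}} \phi^2 = 1$, set $a = \phi(v_{j_0})$, and define $\tilde\phi$ on $G$ by $\tilde\phi = \phi$ on $G_{j_0}$ and $\tilde\phi \equiv a$ on $G \setminus G_{j_0}$. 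Subtracting the overall mean produces $\hat\phi$ with $\sum_v \hat\phi(v) = 0$. Since $\tilde\phi$ is constant on $G \setminus G_{j_0}$ and matches $a$ at the attachment vertex $v_{j_0}$, the gradient of $\hat\phi$ is supported in $G_{j_0}$ and $\int_G |\nabla \hat\phi|^2 \leq \int_{G_{j_0}} |\nabla \phi|^2 = \lambda_1(G_{j_0})$ (using that the degree of $v_{j_0}$ in $G$ dominates its degree in $G_{j_0}$). A direct expansion using the mean-zero property of $\phi$ on $G_{j_0}$ gives $\int_G \hat\phi^2 \geq \int_{G_{j_0}} \phi^2 = 1$, hence $\mathcal{R}_G(\hat\phi) \leq \lambda_1(G_{j_0})$, closing the induction.

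The hard part, as noted above, is securing the uniform lower bound on the weighted Laplacian denominator in Case A; when one vertex weight $w_j$ would threaten to dominate and spoil that bound, the corresponding sub-array $G_j$ has volume at least a fixed fraction of $V$, which allows that situation to be absorbed into the Case B argument via the lifting construction.
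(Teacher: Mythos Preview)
Your Case B lifting is fine (and in fact cleaner than the paper's, which carries a stronger inductive hypothesis so as to extend by zero rather than by a constant). The gap is in Case A. The claim that $\sum_j w_j a_j^2 \geq c' W$ uniformly, with $a_j$ a mean-zero combination of $\sin(2\pi j/\ell)$ and $\cos(2\pi j/\ell)$, is false when the weights are clustered on a short arc of $L$. Concretely, take $h=2$, $\ell = V^{1/2}$, and put weight $\approx V^{2/3}/2$ on each of $k \approx 2V^{1/3}$ consecutive vertices (and weight $1$ elsewhere). Every weight is below the threshold $E=V^{2/3}$, so your Case B escape does not apply; yet the mean-zero constraint forces $a_j \approx 0$ on the support of the weight (since all the $e^{2\pi i j/\ell}$ with $w_j$ large lie in an arc of angular width $\approx k/\ell = 2V^{-1/6}$), giving $\sum_j w_j a_j^2 \lesssim W(k/\ell)^2 \approx V^{2/3}$. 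The resulting Rayleigh quotient is only of order $V^{-7/6}$, not the required $V^{-2}$. Your last paragraph tries to absorb such failures into Case B by saying a dominating weight forces a large sub-array, but clustering does not require any single weight to be a fixed fraction of $V$, nor even to exceed $V^{(2^h-2)/(2^h-1)}$.

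What the paper does instead is to split cases on whether some descendant has volume at least $E = {\rm vol}(G)^{(2^h-2)/(2^h-1)}$, not on the length of $L$. In the ``all descendants small'' case it does \emph{not} use $\sin(2\pi j/\ell)$; it reparametrizes the base circle by cumulative weight, setting $\beta(t)=E^{-1}\int_0^t \eta(s)\,ds$ with $\eta$ the local weight, and uses test functions $\sin(4\pi\beta(t)/E')$. The point is that $\eta \le E$ guarantees $\beta' \le 1$, so the numerator is controlled, while the change of variables automatically makes the denominator of order $E \cdot E'$ regardless of how the weights are distributed. This time-change is exactly the missing idea that handles the clustered-weight obstruction above.
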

\begin{proof}
We show by induction on $h$ 
the following. 
Let $G$ be an array of circles of 
depth $h$; then for every vertex $v$ of the base circle of $G$ 
there is a function $f\in {\mathcal F}_0(G)$ with $f(v)=0$ so that
${\mathcal R}(f)\leq  64\pi^2/{\rm vol}(G)^{2^h/(2^{h}-1)}$.

In the case $h=1$, $G$ is a circle and the claim is straighforward.
Thus assume that the 
claim holds true for $h-1$. Let $G$ be an 
array of circles 
of depth $h$, with base circle $L$.
Assume first that there is a vertex $v$ of $L$ so that the volume of the 
descendant $G_v$ of $L$ at $v$ 
(i.e. the array of circles of depth at most $h-1$ attached to $L$ at $v$)
is at least
${\rm vol}(G)^{(2^h-2)/(2^h-1)}$. 

By the 
induction hypothesis, there is a function $f\in {\mathcal F}_0(G_v)$ with 
$f(v)=0$ and such that 
\[\mathcal{R}(f)\leq 64\pi^2/{\rm vol}(G_v)^{2^{h-1}/(2^{h-1}-1)}.\]
Extend $f$ by zero to $G$. The extended function $F$ vanishes
on the base circle $L$ of $G$, and it is contained in 
${\mathcal F}_0(G)$. Moreover, 
\begin{align}{\mathcal R}(F)&\leq 64 \pi^2/({\rm vol}(G)^{(2^h-2)/(2^h-1)})^{2^{h-1}/(2^{h-1}-1)}
\notag\\ & =
64 \pi^2/{\rm vol}(G)^{2^h/(2^h-1)}.\notag\end{align}
Thus the function $F$ satisfies all the requirements in the above claim,
for every vertex of the base circle.

The second case is that the volume of 
every descendant of $L$ is strictly smaller than
\[{\rm vol}(G)^{(2^h-2)/(2^h-1)}=E.\]
Let $\ell\geq 2$ be the length of the base circle $L$,
let $v\in L$ be any vertex and let 
$\alpha:[0,\ell]\to L$ be a simplicial parametrization of 
$L$ by arc length with $\alpha(0)=v$ 
which maps the integral points in 
$[0,\ell]$ to the vertices of $L$.
For $1\leq k\leq \ell$ and 
$t\in [k-1/2,k+1/2)$, let $\eta(t)$ be one plus 
the volume ${\rm vol}(G_{\alpha(k)})$ 
of the descendant $G_{\alpha(k)}$ 
of $L$ at $\alpha(k)$ (with the obvious 
interpretation for $k=\ell$). 
Note that $1\leq \eta(t)\leq E$ for all $t$.

Define 
\[\beta(t)=\int_0^t \frac{1}{E}\eta(s)ds;\] then $\beta$
is differentiable outside the points $k+\frac{1}{2}$ for 
$k\in \mathbb{Z}$, and moreover $0<\beta^\prime(t)\leq 1$ for all $t$. 
More precisely, 
$\beta$ is a piecewise-linear continuous function which is strictly
increasing, and 
\[\beta(\ell)=(\ell + {\rm vol}(G))/E=\ell/E + E^{1/(2^h-2)}=E^\prime.\] 

Let $m\in (0,\ell)$ be such that $\beta(m)=E^\prime/2$. 
For $t\in [0,\ell]$ define a function $f_1$ supported in 
$[0,m]$ by 
\begin{equation}
f_1(t)=\
\begin{cases}
\sin(4\pi \beta(t)/E^\prime) &\text{if $0\leq t\leq m$,}\\
0 &\text{otherwise.}
\end{cases}
\end{equation}
and define similarly a function $f_2$ supported in $[m,\ell]$.
%Via composition with the inverse $\alpha^{-1}$ of 
%the parmetrization $\alpha:[0,\ell]\to L$ we can view $f$ as a continuous 
%function on $L$. 

By the mean value theorem, for each $k$ there exists 
some $t_k\in [k-\frac{1}{2},k+\frac{1}{2}]$ such that
$f_i(t_k)$ satisfies 
\[f_i(t_k)^2=\int_{k-\frac{1}{2}}^{k+\frac{1}{2}}f_i^2(s)ds.\]
Define 
\[F_i(\alpha(k))=f_i(t_k)\] and 
extend $F_i$ to a function on $G$ which is constant on each
of the arrays of circles $G_{\alpha(i)}$ which are attached to the 
vertices of the circle $L$.
Since the function $\eta$ is constant on each of the intervals
$[k-\frac{1}{2},k+\frac{1}{2}]$, we conclude that
\begin{equation}\label{squareintegral}
\int_0^\ell f_i^2(t)\eta(t)dt=\sum_v F_i^2(v).
\end{equation}

%
% and extend
%$f$ to a function which is constant on each descendant of $L$. Morally, one should think of $f$ as a real-valued function defined on 
%${\mathcal V}(G)$ by replacing the point mass at each vertex by a uniform Lebesgue measure on a small interval around the vertex. 
Our strategy now is to show that ${\mathcal R}(F_i)$ is close
to the quotient 
\[{\mathcal R}(f_i)=\int_0^\ell (f_i^\prime)^2 dt/\int_0^\ell f_i^2(t) \eta(t)dt\]
and furthermore estimate 
${\mathcal R}(f_i)$. The above claim then follows from the 
Minmax theorem, applied to the functions $F_1$ and $F_2$
(whose mean may not be zero). Namely, with a small modification of the
initial functions $f_i$ 
we may assure that $F_1(v)=F_2(v)=0$, and we can find a function
in ${\mathcal F}_0(G)$ which vanishes at $v$, with controlled Rayleigh
quotient, as a linear combination of $F_1$ and $F_2$.

To show that ${\mathcal R}(F_i)$ is close to ${\mathcal R}(f_i)$,
by equation (\ref{squareintegral}) it
suffices to compare $\sum_v\sum_{w\sim v}(F_i(w)-F_i(v))^2/p(v)$ to 
$\int_0^\ell (f_i^\prime)^2(t)dt$. 
We carry this estimate out for $f=f_1$ and $F=F_1$, the calculation for
$f_2$ and $F_2$ is identical.

By the definition of an array
of circles, the valency $p(v)$ of every vertex $v$ of the 
base circle equals $2$ or $4$. 
For each $v_j=\alpha(j)$ we have 
\[\sum_{w\sim v_j}(F(w)-F(v_j))^2=
(F(v_{j-1})-F(v_j))^2+(F(v_{j+1})-F(v_j))^2.\] Thus
\begin{align}
 &\sum_v \sum_{w \sim v} (F(v)-F(w))^2 / p(v) \notag \\
\leq  &\frac{1}{2} \sum_{k=1}^{\ell} ( (F(v_{k-1}) - F(v_k))^2 +(F(v_{k+1}) - F(v_k))^2  ) \notag \\  = &
\sum_{k=1}^{\ell} (F(v_{k}) - F(v_{k-1}))^2.\notag \end{align} 

Recall that there is some 
$t_j\in [j-1/2,j+1/2]$ so that $F(v_j)=f(t_j)$. Since $\vert t_{j+1}-t_j\vert \leq 2$, 
the Cauchy Schwarz inequality yields
\begin{equation}
(F(v_{j+1})-F(v_j))^2 =(\int_{t_{j-1}}^{t_j} f^\prime(t)dt)^2
\leq 4 \int_{t_{j-1}}^{t_j} (f^\prime(t))^2 dt.
\end{equation}
Together this implies the estimate
\[\sum_v\sum_{w\sim v}(F(w)-F(v))^2/p(v)\leq 2\int_0^\ell (f^\prime (t))^2dt.\]
As a consequence, we obtain 
${\mathcal R}(F)\leq 4{\mathcal R}(f)$ as desired.

For the estimate of ${\mathcal R}(f)$ (here as before, $f=f_1$)
recall that $0<\beta^\prime(t)\leq 1$ and $f^\prime(t) =
 4\pi \cos(4\pi \beta(t)/E^\prime)\beta^\prime(t)/E^\prime $. Therefore 
\[ (f^\prime(t))^2 = \frac{16\pi^2}{(E^\prime)^2} \cos(4\pi\beta(t)/E^\prime)^2 (\beta^\prime(t))^2 \le 
\frac{16\pi^2}{(E^\prime)^2} \cos(4\pi\beta(t)/E^\prime)^2 \beta^\prime(t). \]

%\[\vert f^\prime(t)\vert =
%\vert 4\pi \cos(4\pi \beta(t)/E^\prime)\beta^\prime(t)/E^\prime\vert \leq
%\vert 4\pi\cos(4\pi \beta(t)/E^\prime)/E^\prime\vert .\]
This implies 
\begin{align}
\int_0^\ell (f^\prime(t))^2dt &\leq \frac{16\pi^2}{(E^\prime)^2} 
\int_0^m \cos(4\pi\beta(t)/E^\prime)^2\beta^\prime(t)dt\notag\\
&= \frac{16\pi^2}{(E^\prime)^2} 
\int_0^{E^\prime/2}\cos(4\pi s/E^\prime)^2ds.\end{align}
With the same argument, using $\eta(t)=E\beta^\prime(t)$, 
we obtain 
\[\int_0^m f^2(t)\eta(t)dt=E\int_0^{E^\prime/2}\sin(4\pi s/E^\prime)^2ds.\]
Since $\int_0^{E^\prime/2}\cos(4\pi s/E^\prime)^2ds=
\int_0^{E^\prime/2}\sin(4\pi s/E^\prime)^2ds$ 
and $E^\prime \ge E^{1/(2^h-2)}$, we deduce 
\[{\mathcal R}(f)\leq 
16\pi^2 E^{-1}(E^\prime)^{-2}\leq 16\pi^2/ E^{2^h/(2^h-2)}
=16\pi^2/{\rm vol}(G)^{2^h/(2^h-1)}.\]
This is what we wanted to show.
\end{proof}

Our next goal is to extend 
Proposition \ref{graphfinished} to generalized arrays of circles.

\begin{proposition}\label{eigenvaluegraphs}
%For every $h>0$ there is a number $q(h)>0$ with the following property.
Let $G$ be a generalized array of circles of depth at most $h$; then
\[\lambda_1(G)\leq 256\pi^2h^{h-1}/3({\rm vol}(G)^{2^h/(2^{h}-1)}).\]
\end{proposition}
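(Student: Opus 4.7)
My plan is to proceed by strong induction on the depth $h$, extending the argument of Proposition \ref{graphfinished} to accommodate blow-ups. The base case $h=1$ reduces to a circle, where $\lambda_1\le 4\pi^2/\mathrm{vol}(G)^2$ already suffices since $256\pi^2/3>4\pi^2$.

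For the inductive step with $G$ of depth $h$ and base circle $L$, I would set $V=\mathrm{vol}(G)$ and $E=V^{(2^h-2)/(2^h-1)}$, then split into two cases mirroring Proposition \ref{graphfinished}. In the \emph{large-descendant case} some piece attached at a vertex $v$ of $L$ carries volume at least $E$. If this piece is an ordinary attached generalized subarray, it has depth $\le h-1$ and the inductive hypothesis applies directly; extending the resulting eigenfunction by zero transfers the Rayleigh bound to $G$. If instead $v$ has been blown up into arcs $a_1,\dots,a_s$ (with $s\le h$) subject to the depth decomposition $h=\sum_{i=1}^s m_i$, then pigeonhole picks an arc $a_i$ whose total attached volume is at least $E/h$; identifying the two endpoints of $a_i$ produces a circle which, together with the subarrays attached at its interior vertices, forms a generalized array of depth at most $m_i\le h-1$ to which the inductive hypothesis applies. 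The algebraic identity $((2^h-2)/(2^h-1))\cdot(2^{h-1}/(2^{h-1}-1))=2^h/(2^h-1)$, already exploited in Proposition \ref{graphfinished}, keeps the exponent on track, while the factor of $h$ lost in the pigeonhole is exactly what is needed to upgrade $h^{h-2}$ to $h^{h-1}$.

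In the \emph{small-descendant case}, where every piece attached at a vertex of $L$ (in the blow-up case, summed over all arcs of the blow-up together with the subarrays on their interior vertices) has total volume less than $E$, I would mimic the sine construction of Proposition \ref{graphfinished}. Set $\eta(t)=1+(\text{total attached volume at the nearest vertex of }L)$, let $\beta(t)=E^{-1}\int_0^t\eta(s)\,ds$, and take $f_i(t)=\sin(4\pi\beta(t)/E')$ supported on each half of $L$. The test function $F_i$ on $G$ is defined by assigning the value $f_i(t_k)$ (for an appropriate $t_k$ in the half-open interval around $v_k$) to every vertex in the substructure attached at $v_k$, including to every vertex of every arc of a blow-up at $v_k$. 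Edges strictly inside an attached subarray or inside a blow-up then contribute zero to the Rayleigh numerator, so the estimate reduces to the one on $L$ carried out in Proposition \ref{graphfinished}; the only additional loss comes from blown-up vertices having degree up to $2h$ in the Cauchy-Schwarz step, and this loss is absorbed by passing from the constant $64\pi^2$ in Proposition \ref{graphfinished} to $256\pi^2/3$.

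The main obstacle is the bookkeeping of the multiplicative constant $h^{h-1}/3$ through the induction. One must verify that deeply nested blow-ups in the large-descendant case never accumulate more than $h-1$ pigeonholing losses of a factor $h$, and that identifying the endpoints of a vertex arc $a_i$ to apply the hypothesis introduces only a bounded additive perturbation to the volume, so that the exponent $2^h/(2^h-1)$ survives unchanged. I would handle this by keeping the constants explicit at each stage and verifying that the degree-bound contribution from blown-up vertices in the small-descendant case is controlled uniformly in $h$, so that the induction closes with the stated bound.
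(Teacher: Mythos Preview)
Your inductive approach is genuinely different from the paper's. Rather than reprove the induction with blow-ups folded in, the paper reduces to Proposition \ref{graphfinished} directly: it constructs from $G$ an ordinary (non-generalized) array of circles $H$ of the same depth together with a simplicial surjection $\Psi:G\to H$, by going level by level and, at each blown-up vertex, keeping the single vertex arc of largest mass and collapsing all the other arcs (and everything attached to them) to a point. This collapsing loses at most a factor $h$ in volume at each of the $h-1$ levels, giving ${\rm vol}(H)\ge h^{-(h-1)}{\rm vol}(G)$; pulling back a test function via $\Psi$ changes the Rayleigh quotient by at most $4/3$ (the only discrepancy comes from the degree shift $p(v_1)=p(v_2)\ge 3$ versus $p(v)=4$ at the merged endpoint), and one checks $\int(\hat f)^2\ge\int f^2$ for the mean-corrected pullback. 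Then Proposition \ref{graphfinished} applied to $H$ finishes. This is more modular: the sine construction is used once, for ordinary arrays, and the generalized case is a clean comparison.

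Your route can be made to work, but the bookkeeping claim in your large-descendant blow-up case is not right as stated. When you pigeonhole to an arc $a_i$ of volume $\ge E/h$ and apply the inductive bound at depth $m_i\le h-1$, the loss is not a single factor of $h$: the volume deficit $h$ gets raised to the exponent $2^{m_i}/(2^{m_i}-1)$, which is between $1$ and $2$ (and equals $2$ when $m_i=1$). So for instance at $h=2$ with $m_i=1$, pigeonhole costs $h^2=4$, not $h=2$, and your asserted upgrade $(h-1)^{h-2}\mapsto h^{h-1}$ does not absorb it. You can repair this either by using the sharper circle bound $4\pi^2/\ell^2$ at the bottom rather than the inflated inductive constant, or by allowing a somewhat larger constant $C(h)$ than the stated $256\pi^2 h^{h-1}/3$; the paper's own constant is also not tracked tightly (the final displayed inequality in its proof actually yields $h^{(h-1)\cdot 2^h/(2^h-1)}$ rather than $h^{h-1}$), so for the purposes of the paper only the form $C(h)/{\rm vol}(G)^{2^h/(2^h-1)}$ matters. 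But you should not claim the pigeonhole loss is ``exactly'' one factor of $h$.
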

\begin{proof}
We prove the proposition by constructing 
for every generalized array of circles $G$ of depth $h$ 
an array of circles $H$ of depth at most $h$, 
and a continuous simplicial surjective map $\Psi:G\to H$.
This construction is done in such a way
that 
\begin{itemize}
\item $\lambda_1(G) \le \frac{4}{3} \lambda_1(H)$,
\item ${\rm vol}(H) \ge h^{-h+1} {\rm vol}(G)$. 
\end{itemize} 

Then from Proposition \ref{graphfinished} we have 
\begin{align}\lambda_1(G) \le \frac{4}{3} \lambda_1(H) &\le \frac{4}{3} 64 \pi^2 / {\rm vol}(H)^{2^h/(2^h-1)} \notag\\
&\le \frac{256 \pi^2}{3} / (h^{-h+1} {\rm vol}(G))^{2^h/(2^h-1)} 
\notag\end{align}
which is what we wanted to show.

For the construction of $H$, note that by the inductive definition, 
a generalized
array of circles of depth at most $h$ differs from an array of circles by
allowing the blow-up of vertices. Recall that this means
that we start with a base
circle $L$, and for each vertex $v$ of $L$, we allow to 
either attach to $v$ a generalized array of circles of depth at most $h-1$, 
or to replace $v$  
by $s\leq h$ arcs $a_1,\dots,a_s$. In the second case, to each such 
arc $a_i$ is associated a positive weight $m_i\geq 1$ so that
$h\geq  \sum_im_i$. To each interior vertex of 
the arc $a_i$ there is
attached a (possibly trivial) generalized array of circles of 
depth at most $m_i-1$, allowing blow-ups of vertices
as before. Let $v_1,v_2$ be the 
common endpoints of the arcs $a_i$. 
Define the \emph{mass} of $a_i$ to be
the total volume of the connected component $E(a_i)$ 
of $G-\{v_1,v_2\}$ 
containing the interior of the arc $a_i$.

The construction of the array of circles $H$ is carried out inductively 
with the following algorithm. 
Begin with the base circle $L$ of $G$. If no vertex of $L$ is blown 
up in $G$ in the inductive build-up of $G$
then repeat the construction with all circles in $G$ 
of depth two. 
Otherwise let $v_1,\dots,v_s$ be the vertices of $L$ which
are blown-up in $G$. For each $i\leq s$, choose a vertex 
arc $a_i$ 
for the vertex $v_i$ with the largest mass. Define
$G_1$ to be the graph obtained from $G$ by collapsing each of the
graphs $E(b_j)$ for all vertex arcs 
$b_j\not=a_i$ for the vertex $v_i$ to a point.
This modification identifies the endpoints of the arc $a_i$.
Or, equivalently, 
in $G_1$, the arc $a_i$ is replaced by a circle of the same length.

Since the sum of the masses of the vertex arcs for the vertex $v_i$ is 
not bigger than $h$ times the mass of $a_i$, 
the volume of $G_1$ is not smaller than ${\rm vol}(G)/h$.
Moreover, $G_1$ 
is a generalized array of circles with no blown-up vertex on the base circle. 
Note that there is a natural 
surjective simplicial map $\Psi_1:G\to G_1$ which 
for each $i$ maps the graph $E(a_i)$ isomorphically, and it maps the 
blown-up base circle in the construction of $G$ to the base circle in $G_1$.

%Figure 3 shows this procedure.
% By construction, $G_1$ is a generalized
%array of circles with base circle $L$, and $L$ does not contain any
%blown-up vertex.

Repeat this construction with $G_1$ and the blown-up circles of depth two.
Since no vertex of the circles of depth $h$ is  blown up, 
in at most $h-1$ such steps we construct in this way an array of circles $H$ with 
${\rm vol}(H)\geq h^{-h+1}{\rm vol}(G)$. There is a natural simplicial
surjection $\Psi:G\to H$. 

Let now $f\in {\mathcal F}_0(H)$ be any function. We show
next that $\int ((\Psi\circ f)^\prime)^2\leq \frac{4}{3}\int (f^\prime)^2$. 
By definition,
$$\int ((f\circ \Psi)^\prime)^2 = 
{\sum_v \frac{1}{p(v)}
\sum_{w \sim v}(f\circ \Psi(v) - f\circ \Psi(w))^2}.$$

Note that 
$\sum_{w \sim v}(f\circ \Psi(v) - f\circ \Psi(w))^2 = 0$ 
if $v$ is an interior vertex of an arc collapsed by $\Psi$. If $v_1, v_2$ are the two endpoints of such an arc, 
and if $v=\Psi(v_1)=\Psi(v_2)$ then
\begin{align}
\sum_{w \sim v_1 \in G}(f\circ \Psi(v_1) - f\circ \Psi(w))^2 &+ \sum_{w \sim v_2 \in G}(f\circ \Psi(v_2) - f\circ \Psi(w))^2\notag \\
& = \sum_{w \sim v \in H}(f(v)- f(w))^2.\notag\end{align}
Also note that $p(v_1) = p(v_2) \ge 3$ while $p(v) = 4$. Hence, 
$$\sum_{i= 1,2} \sum_{w \sim v_i \in G}(f\circ \Psi(v_i) - f\circ \Psi(w))^2 / p(v_i) \le \frac{4}{3} \sum_{w \sim v \in H}(f(v)- f(w))^2 / p(v).$$ 
For all other vertices, both denominator and 
numerator coincide when we switch from $f\circ \Psi$ to $f$. 
Thus we have
\begin{equation}\label{estimate}
 \int_G ((f\circ \Psi)^\prime)^2\leq \frac{4}{3}\int_H(f^\prime)^2\end{equation}
as claimed.

The function $f\circ \Psi$ need not be contained in ${\mathcal F}_0(G)$. Let 
$m=\int f\circ \Psi$ and let 
$\hat f=f\circ \Psi-m$. Then 
$\hat f\in {\mathcal F}_0(G)$, and 
$\int (\hat f^\prime)^2=\int ((f\circ \Psi)^\prime)^2$. Hence using the 
estimate (\ref{estimate}), for the purpose of the proposition it suffices to  
show that $\int \hat f^2\geq \int f^2$.

To this end note that there is a subset of 
the set of vertices of $G$, say the set ${\mathcal V}_1$, which is mapped
by $\Psi$ bijectively onto the set of vertices of $H$: For vertex arcs $a_1,\dots,a_s$ of a blown-up
vertex $v$, with endpoints $v_1,v_2$, choose either $v_1$ or $v_2$ to be in 
${\mathcal V}_1$ and declare the second endpoint as well as all interior vertices
of any erased arc
$a$ and all vertices of
any of the subgraphs of $G$ 
which are attached to interior points of $a$ to be in ${\mathcal V}(G)-{\mathcal V}_1$.
Proceed by induction. 

Now $f\in {\mathcal F}_0(H)$ and therefore $\int_H fm=0$. This implies that
\[\int_G \hat f^2\geq \sum_{v\in {\mathcal V}_1}(f\circ \Psi-m)(v)^2=\int_H (f-m)^2=\int_H(f^2+m^2)\geq \int_H f^2\]
which is what we wanted to show.
\end{proof}

%\begin{figure}[h]
%\begin{center}
%%\includegraphics[scale=0.7]{collapsing-vertex-arcs.pdf}
%\caption{Collapsing vertex arcs $a_1, \ldots, a_{s-1}$ and all attached subgrap%hs}
%\label{fig:collapse}
%\end{center}
%\end{figure}

We are left with finding examples of graphs which realize the bounds
in Proposition \ref{graphfinished} up to a universal constant. 
To this end we say that the
\emph{support ${\rm supp}(f^\prime)$ of the derivative of $f$} consists
of all edges $e$ in $G$ so that the values of $f$ at the endpoints of 
$e$ do not coincide.

We begin with the following elementary

\begin{lemma}\label{separate}
Let $G$ be any finite connected graph.
Assume that there is a decomposition 
${\mathcal F}_0(G)=A\oplus B$ which is orthogonal for the 
$\ell^2$-inner product. Assume furthermore that the supports of the
derivatives of functions in $A,B$ are disjoint; 
then 
\[\lambda_1(G)=\min\{\lambda_1(A),\lambda_1(B)\}\]
where $\lambda_1(A)$ (or $\lambda_1(B)$) 
is the infimum of the Rayleigh quotients
over all functions of the space $A$ (or $B$).
\end{lemma}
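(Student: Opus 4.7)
The plan is to prove the two inequalities $\lambda_1(G)\leq \min\{\lambda_1(A),\lambda_1(B)\}$ and $\lambda_1(G)\geq \min\{\lambda_1(A),\lambda_1(B)\}$ separately. The first is essentially tautological, so the work lies in the second.

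For the easy direction, note that $A$ and $B$ are by hypothesis subspaces of $\mathcal{F}_0(G)$. Hence $\lambda_1(G) = \inf\{\mathcal{R}(f) : 0 \neq f \in \mathcal{F}_0(G)\}$ is bounded above by the infimum over each of the smaller families, i.e.\ $\lambda_1(G) \leq \lambda_1(A)$ and $\lambda_1(G) \leq \lambda_1(B)$.

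For the nontrivial direction, pick any $0 \neq f \in \mathcal{F}_0(G)$ and decompose $f = f_A + f_B$ with $f_A \in A$, $f_B \in B$. The orthogonality of the decomposition in the $\ell^2$-inner product gives
\[\int f^2 = \int f_A^2 + \int f_B^2.\]
The key step is to establish the analogous additivity for the Dirichlet form
\[\int (f')^2 = \int (f_A')^2 + \int (f_B')^2.\]
This is where the disjoint-support hypothesis enters: on each edge $e = \{v,w\}$, one computes $(f(w)-f(v))^2 = (f_A(w)-f_A(v))^2 + 2(f_A(w)-f_A(v))(f_B(w)-f_B(v)) + (f_B(w)-f_B(v))^2$, and the cross term vanishes since at any edge $e$, at most one of the differences $f_A(w)-f_A(v)$, $f_B(w)-f_B(v)$ is nonzero. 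Summing over $v$ and $w \sim v$ with the weights $1/p(v)$ from formula~(\ref{first}) yields the additivity.

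With both additivities in hand, the conclusion follows from the elementary observation that for positive reals, $\frac{a+c}{b+d} \geq \min\{a/b, c/d\}$. Applying this to $a=\int(f_A')^2$, $b=\int f_A^2$, $c=\int(f_B')^2$, $d=\int f_B^2$ (handling the trivial cases $f_A = 0$ or $f_B = 0$ separately) gives $\mathcal{R}(f) \geq \min\{\mathcal{R}(f_A),\mathcal{R}(f_B)\} \geq \min\{\lambda_1(A),\lambda_1(B)\}$, and taking the infimum over $f$ completes the proof. The main (and only) obstacle is the cross-term cancellation in the Dirichlet form, and it is precisely the disjoint-support assumption that makes this work.
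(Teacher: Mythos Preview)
Your proof is correct and follows essentially the same approach as the paper's: decompose $f=f_A+f_B$, use $\ell^2$-orthogonality to split $\int f^2$, use the disjoint-support hypothesis on derivatives to split $\int(f')^2$, and combine. The only cosmetic difference is that the paper multiplies through (writing $\int(f_A')^2\geq s\int f_A^2$ with $s=\min\{\lambda_1(A),\lambda_1(B)\}$, which holds even when $f_A=0$) rather than invoking the fraction inequality $\frac{a+c}{b+d}\geq\min\{a/b,c/d\}$ and then handling the degenerate cases separately.
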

\begin{proof}
Under the assumption of the lemma, if $\phi\in {\mathcal F}_0(G)$ 
is arbitrary then $\phi=\alpha +\beta$ for some
$\alpha\in A,\beta\in B$. Since the supports of the 
derivatives of functions in $A,B$ are disjoint, formula (\ref{first})
implies that
\[\int (\phi^\prime)^2=
\int (\alpha^\prime)^2+\int (\beta^\prime)^2.\]

Now if $s=\min\{\lambda_1(A),\lambda_1(B)\}$ then
\[\int (\alpha^\prime)^2\geq s\int \alpha^2, \,
\int (\beta^\prime)^2 \geq s\int \beta^2\] 
and consequently 
\[\int (\phi^\prime)^2=\int (\alpha^\prime)^2+(\beta^\prime)^2
\geq s\int (\alpha^2+\beta^2)=s\int (\alpha+\beta)^2=s\int \phi^2\]
where the second last equality follows from the assumption that 
$\alpha,\beta$ are orthogonal for the $\ell^2$-inner product.
This shows the lemma.
\end{proof}

%The graphs we are interested in will be called 
%\emph{generalized arrays of circles}. 
%First we look at a more restrictive class of graphs
%which is defined as follows.

Recall from Section \ref{thick} the definition of a
step-homogeneous 
array of circles. Such an array $G$ is characterized by
the property that all circles of depth $j$ have the same length
$\ell(j)$. The array of circles is called \emph{optimal} if 
there exists a number $k\geq 3$ so that
$\ell(j)=k^{2^{j-1}}$.

\begin{proposition}\label{cactusone}
For every $h\geq 1$ 
there is a number $q=q(h)>0$ with the following property.
Let $G$ be an optimal  
step-homogeneous array of circles of depth $h$; then 
\[\lambda_1(G)\geq q/{\rm vol}(G)^{2^{h}/(2^h-1)}.\]
%, i.e., $lambda_1(G) {\rm vol}(G)^{(2^h+1)/2^h} \to c$ as $k \to \infty$. 
\end{proposition}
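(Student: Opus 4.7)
The plan is to prove the bound by induction on the depth $h$. The base case $h=1$ is a circle of length $k=\mathrm{vol}(G)$, for which the first nonzero eigenvalue is the explicit value $4\sin^2(\pi/k)\geq c/k^2 = c/\mathrm{vol}(G)^{2/(2-1)}$, so the claim holds.

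For the inductive step, let $G$ be an optimal step-homogeneous array of depth $h$ with base circle $L$ of length $k$ and isomorphic sub-arrays $G_1,\dots,G_k$ attached at the vertices of $L$, each being an optimal step-homogeneous array of depth $h-1$ with base circle of length $k^2$. Since $|G_i|=V$ with $V\sim k^{2^h-2}$ and $\mathrm{vol}(G)\sim kV\sim k^{2^h-1}$, the target bound becomes $\lambda_1(G)\gtrsim 1/k^{2^h}$. Given any $f\in{\mathcal F}_0(G)$, I would write $f = g+\alpha$ orthogonally in $\ell^2$, where $g$ is constant on each $G_i$ with value $c_i := V^{-1}\sum_{v\in G_i} f(v)$, and $\alpha = f-g$ has zero mean on every $G_i$; then $\int f^2=\int g^2+\int\alpha^2$.

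I would then handle each piece separately. For $\alpha$: the inductive hypothesis applied to each $G_i$ (using the matching exponent $\mathrm{vol}(G_i)^{2^{h-1}/(2^{h-1}-1)}\sim k^{2^h}$) gives $\int_{G_i}(\alpha')^2\geq \lambda_1(G_i)\int_{G_i}\alpha^2\gtrsim (1/k^{2^h})\int_{G_i}\alpha^2$; summing over $i$ and using the bounded ratio between the vertex degrees in $G$ versus $G_i$ at the attachment points $v_i$, one obtains $\int_G(\alpha')^2\gtrsim (1/k^{2^h})\int \alpha^2$. For $g$: its Dirichlet energy on $G$ is a bounded multiple of $\sum_i(c_i-c_{i+1})^2$, and Poincar\'e on the base circle $L$ (using $\sum c_i=0$ coming from the zero-mean condition on $f$) gives $\sum_i(c_i-c_{i+1})^2\gtrsim (1/k^2)\sum c_i^2$, which translates to $\int_G(g')^2\gtrsim (1/(Vk^2))\int g^2 = (1/k^{2^h})\int g^2$.

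The main obstacle is that this decomposition does not satisfy the hypotheses of Lemma \ref{separate}: the derivative supports of the two subspaces are not disjoint, since both $g$ and $\alpha$ can vary on edges of $L$ via the jumps $c_i-c_{i+1}$ and $\alpha(v_i)-\alpha(v_{i+1})$ respectively. The expansion $\int_G(f')^2=\int_G(g')^2+2\mathcal{E}(g,\alpha)+\int_G(\alpha')^2$ thus contains a cross term $\mathcal{E}(g,\alpha)$ supported on $L$-edges which need not vanish. The heart of the argument is to bound $|\mathcal{E}(g,\alpha)|$ by small fractions of $\int_G(g')^2$ and $\int_G(\alpha')^2$ using Cauchy-Schwarz together with a trace estimate that controls $\sum_i|\alpha(v_i)|^2$ by the intrinsic Dirichlet energy of $\alpha$ on each sub-array, allowing one to absorb the cross term into the main terms. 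The resulting inequality $\int(f')^2\gtrsim (1/k^{2^h})(\int g^2+\int\alpha^2)=(1/k^{2^h})\int f^2$ closes the induction.
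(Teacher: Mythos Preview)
Your inductive strategy is natural but the sketch has a genuine gap at exactly the point you flag as the ``heart of the argument.'' The trace estimate you invoke gives $\sum_i \alpha(v_i)^2 \le T \int_{G\setminus L}(\alpha')^2$ for some constant $T$, but $T$ is the Green's function $G_{G_i}(v_i,v_i)$, which for an optimal array of depth $h-1$ with base circle of length $k^2$ is of order at least $k^2$ (and establishing even this requires a separate structural argument about the attachment vertex, not just the inductive hypothesis on $\lambda_1(G_i)$). With $T\sim k^2$, Cauchy--Schwarz yields $|\mathcal E(g,\alpha)|\lesssim k\,(\int(g')^2)^{1/2}(\int(\alpha')^2)^{1/2}$, and since the factor $k$ grows without bound you cannot absorb the cross term into $\int(g')^2+\int(\alpha')^2$ by any choice of splitting parameter. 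Concretely, when $f$ is constant on $L$ one has $\int(f')^2=\int_{G\setminus L}(\alpha')^2$ only, and tracking your bounds gives a Rayleigh quotient estimate that is off by a power of $k$. A repair is possible --- one must combine the trace bound with circle Poincar\'e applied to $f|_L$ rather than to the $c_i$, and use $\sum_i f(v_i)=\sum_i\alpha(v_i)$ to control the mean of $f|_L$ --- but this is substantially more than ``absorb via Cauchy--Schwarz,'' and the required trace constant $T\sim k^2$ must itself be proved.

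The paper sidesteps all of this by decomposing at the opposite end of the array: it peels off the \emph{outermost} circles (depth $h$) rather than the base circle. After a controlled quasi-isometry (needed so that every vertex of the inner graph $Q\simeq G_{h-1}$ has an outermost circle attached), one takes $D=$ functions constant on each outermost circle and $E=$ functions constant on $Q$. Since the outermost circles are attached at a single vertex each, the derivatives of $D$ live on $Q$-edges and the derivatives of $E$ live on outermost-circle edges, so the supports are \emph{disjoint} and there is no cross term at all (the spaces are not $\ell^2$-orthogonal, but that only costs a factor of $2$). Then $\lambda_1(D)$ is controlled by the inductive hypothesis on $G_{h-1}$ together with the weight $k^{2^{h-1}}$ from the attached circles, and $\lambda_1(E)$ reduces via symmetry to a single circle of length $k^{2^{h-1}}$. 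Your inside-out decomposition forces the overlap on $L$ precisely because each $G_i$ meets $L$ in a vertex of $L$ itself; the paper's outside-in decomposition avoids this because each outermost circle meets the rest of the graph in a single vertex that is \emph{not} shared between two such circles.
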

\begin{proof}
Let for the moment $G$ be an arbitrary step homogeneous
array of circles of depth $h$.
Then for every $m\leq h$, 
the union $G_m$ of all circles in $G$ of depth at most $m$ is a
step homogeneous array of circles of depth $m$.
However, if $1\leq m\leq h-2$ then the closure in $G$ of a component
of $G-G_m$ is an array of circles which is not step homogeneous.
Namely, its base circle contains a distinguished vertex (the attaching vertex)
of valency two. Deleting this vertex results in a step homogeneous
array of circles.

%We say that an inequality between numerical invariants 
%of finite connected graphs
%is an \emph{approximate
%equality} for a class of sample graphs if up to a universal multiplicative 
%constant, equality holds for every graph in the class.
Write $A\asymp B$ if $n^{-1}B \leq A\leq nB$ for a universal
constant $n>0$, and write $A\preceq B$ if $A\leq nB$ for a universal 
constant $n>0$. 

Let as before $\ell(j)\geq 3$ be the length of a circle in $G$ of 
depth $j$. 
The volume of $G$  
can recursively be computed by
\[{\rm vol}(G_m)=\ell(m)\chi(m-1){\rm vol}(G_{m-1})\]
where $\chi(m-1)$ is the number of bivalent vertices in $G_{m-1}$
(which is just
$\vert {\mathcal V}(G_{m-1})\vert -\vert {\mathcal V}(G_{m-2})\vert$).
This implies the estimate ${\rm vol}(G)\asymp \prod_{j=1}^h\ell(j)$.
Thus if $G$ is optimal, with base circle of length $k\geq 3$, then 
${\rm vol}(G)\asymp k^{2^{h}-1}$.

 Our goal is 
to show that 
\begin{equation}\label{formula}
\lambda_1(G)\succeq 1/k^{2^h}.\end{equation}

When $h=1$, $G$ is a circle of length $k$, and the
estimate $\lambda_1(G)\geq 4/k^2$ is an easy consequence of the
following. Any function $F$ on the vertex
set of $G$ can be extended
by convex combination to a continuous piecewise
affine function $f$ on all of $G$. If $\sum_vF(v)=0$ then
$\int_G f=0$ where integration is with respect to the
standard Lebesgue measure which gives an edge the volume one.
The Rayleigh quotients can be compared
by ${\mathcal R}(F)\geq \frac{1}{2}{\mathcal R}(f)$. 
Now the smallest non-zero eigenvalue of a smooth
circle of length $R$ equals $4\pi^2/R^2$ which yields the
required estimate for $\lambda_1(G)$.

Furthermore, let 
$f$ be any function on ${\mathcal V}(G)$ 
which either vanishes at a vertex $v$
or changes signs at $v$
(by this we mean that
$f$ assumes a value of opposite sign at a neighbor of $v$).
Cut $G$ open
at $v$, glue two copies of the cut open 
arc to a circle $\hat G$ of double length and extend 
$f$ to a function $F$ on ${\mathcal V}(\hat G)$ by 
reflection at the two copies of $v$ in $\hat G$ (with the 
obvious interpretation if $f$ changes signs at $v$). 
Then $\sum_wF(w)=0$, and the Rayleigh quotients
${\mathcal R}(F)$ and ${\mathcal R}(f)$ can be compared
as follows. 

If $f(v)=0$ then for the two copies $v_1,v_2$ of $v$ in $G$, 
we have 
\[\sum_i\sum_{w\sim v_i}(F(w)-F(v_i))^2=2\sum_{w\sim v}(f(w)-f(v))^2\]
and similarly for the other vertices of $G$ and their two preimages
in $\hat G$, and  consequently ${\mathcal R}(F)=
\mathcal {R}(f)$. 

Now assume that $f$ changes sign at $v$. Let $w_1,w_2$ be the two neighbors of $v$
and assume that the signs of $f(v)$ and $f(w_1)$ are opposite. 
The contribution of the two preimages $v_1,v_2$ of $v$ in $\hat G$ 
in the expression for $\int (F^\prime)^2$ equals
\[(-f(w_1)-f(v))^2+(f(w_1)-f(v))^2+(-f(w_2)-f(v))^2+(f(w_2)-f(v))^2.\]
Now if the signs of $f(w_1)$ and $f(w_2)$ coincide then 
$(-f(w_i)-f(v))^2\leq (f(w_i)-f(v))^2$ for $i=1,2$ 
and hence ${\mathcal R}(F)\leq {\mathcal R}(f)$. Otherwise note
that 
\begin{align}
(-f(w_2)-f(v))^2
&\leq 2((-f(w_2)-f(w_1))^2+(f(w_1)-f(v))^2)\notag\\
&\leq 4(f(w_2)-f(v))^2+6(f(w_1)-f(v))^2\notag\end{align}
which implies that ${\mathcal R}(F)\leq 3{\mathcal R}(f)$.
Thus the Rayleigh
quotient of the function $f$ 
is not smaller than $\frac{1}{3}\lambda_1(\hat G)\geq 
1/3k^2$.

We now proceed by induction on $h$; then case $h=1$ was treated
above. Thus 
assume that the estimate (\ref{formula})
holds true for
optimal step-homoge\-ne\-ous arrays of depth at most $h-1\geq 1$
and 
let $G$ be an optimal step homogeneous array
of depth $h$, with
base circle of length $k$. 

Our strategy is to apply Lemma \ref{separate} to the subspace
of ${\mathcal F}_0(G)$ of 
functions which
are constant on each of the circles of depth $h$
and compare 
their Rayleigh quotients
to $\lambda_1(G_{h-1})$. The following construction 
is used to circumvent the difficulty that a circle of depth $h$ 
in $G$ is
attached to the vertices of $G_{h-1}$ of valence two but not
to every vertex. We construct from $G$ a graph $H$ of uniformly bounded 
valency which is uniformly quasi-isometric to $G$ and which
does not have this problem.
We then use
Theorem 2.1 of \cite{Man05} to compare
$\lambda_1(G)$ to $\lambda_1(H)$.

The graph $H$ is constructed 
successively as follows. If $h\leq 2$ then put $H=G$. Otherwise
for each vertex
$v\in G_{h-2}-G_{h-3}$, collapse one of the two edges in $G_{h-1}-G_{h-2}$ 
which are incident on $v$ to a point. Let $\hat G$ be the  
resulting graph. It arises from a graph of valency four by merging pairs of 
vertices, with any vertex involved in at most one such process.
Thus the valency of $Âž\hat G$ is at most $7$, and 
the collapsing map 
$\hat\Psi:G\to \hat G$ is a one-Lipschitz  
$2$-quasi-isometry which maps
$G_{h-3}$ isomorphically. Note that $\hat G$ 
is obtained from
$\hat\Psi(G_{h-1})$ by attaching to each vertex of 
$\hat\Psi(G_{h-1}-G_{h-3})$ 
a circle of length $k^{2^{h-1}}$.

Repeat this construction with the subgraph $\hat \Psi(G_{h-3})$ 
of $\hat G$, now collapsing edges in $\hat\Psi(G_{h-2}-G_{h-3})$.
In $h-2$ such steps we obtain a graph $H$ and a 
surjective simplicial projection $\Psi:G\to H$ with the
following properties.
\begin{enumerate}
\item The valency of $H$ is at most $4h$.
\item $\Psi$ is an $m$-quasi-isometry 
for a number $m=m(h)\geq 2$ only depending
on $h$ but not on $k$.
\item $Q=\Psi(G_{h-1})$ is $m$-quasi-isometric to 
$G_{h-1}$. 
\item $H$ is obtained from $Q$ by attaching to each vertex $v$ of 
$Q$ a circle $H_v$ of length $k^{2^{h-1}}$.
\end{enumerate}
 
By Theorem 2.1 of \cite{Man05} (note that Mantuano uses the
notion rough isometry for our more standard terminology
quasi-isometry) 
and properties
(1) and (2) above, it now suffices to show the
existence of a number $q=q(h)>0$ so that
$\lambda_1(H)\geq q/k^{2^h}$. By property (3) above, 
by the induction hypothesis 
and by Theorem 2.1 of \cite{Man05}, 
applied to $G_{h-1}$ and its image $Q$ under the map $\Psi$, 
we may 
assume that 
$\lambda_1(Q)\geq q^\prime /k^{2^{h-1}}$ for a 
number $q^\prime>0$ only depending on $h-1$ but not
on $k$.

Let $D\subset {\mathcal F}_0(H)$ 
be the linear subspace of  
all functions on ${\mathcal V}(H)$ 
which are constant on the circles $H_v$
for all vertices $v$ of $Q$. Let
$E\subset {\mathcal F}_0(H)$ be the 
linear subspace of functions which
are constant on $Q$. By definition, 
the supports of the derivatives of 
any two functions $d\in D,e\in E$ are disjoint. 

We claim that ${\mathcal F}_0(H)=D\oplus E$. To this end
let $f\in {\mathcal F}_0(H)$ and 
let $\hat f$ be the unique function on ${\mathcal V}(H)$ 
which coincides with $f$ on $Q\subset H$ 
and is constant on each graph $H_v\subset H$
for every $v\in {\mathcal V}(Q)$. 
Let $a=\sum_{w\in H}\hat f(w)$ and define
\[\Pi(f)=\hat f-a/\vert {\mathcal V}(H)\vert.\]
Then $\Pi:f\in {\mathcal F}_0(H)\to \Pi(f)\in D$ is a linear
projection, i.e. $\Pi$ is linear, maps ${\mathcal F}_0(H)$
into $D$ and equals the identity on the subspace $D$
of ${\mathcal F}_0(H)$. Similarly, 
${\rm Id}-\Pi:{\mathcal F}_0(H)\to E$ is a linear projection
as well.

The subspaces $D,E$ of ${\mathcal F}_0(H)$ 
are not orthogonal for the 
$\ell^2$-inner product, but as 
$\int (\alpha^2+\beta^2)\geq \frac{1}{2}\int (\alpha+\beta)^2$
for any two functions $\alpha,\beta$ on $H$, Lemma \ref{separate}
and its proof implies that 
\[\lambda_1(H)\geq \frac{1}{2}\min\{\lambda_1(D),\lambda_1(E)\}.\]

Our strategy now is to estimate 
$\lambda_1(D)$ and $\lambda_1(E)$ separately. 
We begin with estimating $\lambda_1(D)$.

Thus let $d\in D$ and let 
$d_Q$ be the restriction of $d$ to 
$Q$. By the definition of $D$, we have
${\rm supp}(d^\prime)\subset Q$. The
degree of a vertex 
$v\in Q\subset H$ viewed as a vertex in $H$ 
is at most twice its degree as a vertex in $Q$ 
and therefore  
\begin{equation}\label{rayone}
\int (d_Q^\prime)^2\leq 2\int (d^\prime)^2.\end{equation}

Since for every vertex $v\in Q$ the function 
$d$ is constant on the circle $H_v$ and 
such a circle has precisely $k^{2^{h-1}}$ vertices, we conclude that
\[\sum_{w\in H_v}d^2(w)=k^{2^{h-1}}d_Q^2(v)\]
and hence
\begin{equation}\label{raytwo}
k^{2^{h-1}}\int d_Q^2= \int d^2.\end{equation}
The estimates (\ref{rayone},\ref{raytwo}) imply that
\[{\mathcal R}(d)\geq {\mathcal R}(d_Q)/2k^{2^{h-1}}.\]

On the other hand, we also have
\[\int d=k^{2^{h-1}}\int d_Q\]
and therefore 
$d_Q\in {\mathcal F}_0(Q)$ and hence
${\mathcal R}(d_Q)\geq \lambda_1(Q)$. 
Thus by the induction 
hypothesis, we obtain 
\begin{equation}\label{raythree}\lambda_1(D)
\geq q^\prime/2k^{2^{h-1}}k^{2^{h-1}}=q/k^{2^h}\end{equation}
where $q=q^\prime/2$ 
only depends on $h$.

We are left with estimating $\lambda_1(E)$. 
To this end define another graph $W$ as follows.
The graph $W$ contains a distinguished vertex $w$. 
There are $n=\vert {\mathcal V}(Q)\vert$ edges incident on $w$.
Let $e$ be such an edge; one endpoint of $e$ equals $w$.
Attached to the second endpoint is 
a circle with $k^{2^{h-1}}$ vertices.

Note that $W$ admits a group of automorphisms
which fix $w$ and permute the edges of $W$ incident on $w$.
Each permutation of the edges incident on $w$ is the
restriction of such an automorphism.
Any labeling of the edges of $W$ incident on 
$w$ gives rise to a 
bijection ${\mathcal V}(H)\to 
{\mathcal V}(W)-\{w\}$ which maps the vertices in $Q$ to 
endpoints of the edges incident on $w$.
Fix once and for all such a bijection
$\Theta$.

Via the map $\Theta$, 
each function $f\in E$ naturally induces 
a function
$f^*$ on ${\mathcal V}(W)$. This function 
may not be of zero mean, but as $f$ is of zero mean
and the map $\Theta$ is a bijection 
of set of the vertices of $H$ onto the set of 
vertices of $W$ distinct from $w$, 
the square norm of the normalization 
$g$ of $f^*$ is not
smaller than $\sum_{v\in H}f^2(v)$ (compare the 
proof of Proposition \ref{eigenvaluegraphs}).

As $f\in E$, as $\Theta$ maps vertices 
of degree contained in $[2,4h]$ to 
vertices of degree in $[2,3]$, and as the special 
vertex $w$ does not contribute to $\int (g^\prime)^2$,   
we have $\int (g^\prime)^2\leq 4h\int (f^\prime)^2$.
Together this shows 
\[{\mathcal R}(f)\geq \frac{1}{4h}{\mathcal R}(g).\]
As a consequence, for the desired estimate
of $\lambda_1(E)$ it suffices to show that
$\lambda_1(W)\geq m/k^{2^h}$ for a universal constant
$m>0$. 

To this end
let $f$ be an eigenfunction 
on $W$ for the smallest eigenvalue $\lambda_1(W)$. 
If we define
\begin{equation}\label{Laplacian}
{\mathcal L}f(u)=\frac{1}{\sqrt{p(v)}}
\sum_{w\sim v}(\frac{f(v)}{\sqrt{p(v)}}-
\frac{f(w)}{\sqrt{p(w)}})\end{equation}
then ${\mathcal L}f(u)=\lambda_1(W)f(u)$.

%The restriction of $f$ to a component of 
%$W-{w}$ is an eigenfunction on a circle of length
%$k^{2^{h-1}}$, with an edge attached at a single
%vertex $u$. 

We distinguish now two cases. In the first case,
$f(w)=0$. Then equation (\ref{Laplacian}) shows that
the restriction $f_U$ of $f$ to the closure of 
each component $U$ of
$W-\{w\}$ is an eigenfunction on $U$ for the eigenvalue
$\lambda_1(W)$. Such a component $U$ is a circle
of length $k^{2^{h-1}}$ with a single edge attached at
one vertex. 
As $f_U$ assumes the value zero,
its Rayleigh quotient ${\mathcal R}(f\vert U)$ can be
estimated by 
\[{\mathcal R}(f\vert U)\succeq (k^{2^{h-1}})^2=k^{2^h}\]
by the discussion in the
beginning of this proof which is equally valid for a circle
with a single edge attached at one vertex instead of a circle.

Together this yields
\begin{align}\int (f^\prime)^2\geq \sum_U 
\sum_{v\in {\mathcal V}(U)-\{w\}}\frac{1}{p(v)}\sum_{z\sim v}(f(z)-f(v))^2\notag\\
\succeq  (\sum_U\sum_{v\in {\mathcal V}(U)-\{w\}}f^2(v))/k^{2^h}\notag\end{align}
and therefore ${\mathcal R}(f)0\lambda_1(W)\succeq 1/k^{2^h}$
as desired.

Now assume that $f(w)\not =0$. 
Let $A$ be an automorphism of $W$; then 
$f\circ A$ is an eigenfunction for the eigenvalue $\lambda_1(W)$.
If $f\circ A\not=f$, then $f-f\circ A$ is an eigenfunction on $W$ for the
eigenvalue $\lambda_1(W)$ which vanishes at $w$. 
The desired estimate now follows from the above discussion
provided that $f\circ A\not=A$ for at least one automorphism
$A$ of $W$. 

Finally suppose that $f\circ A=f$ for all 
automorphisms $A$ of $W$. Then $f$ descends to 
an eigenfunction $\hat f$  on a circle $U$ 
of length $k^{2^{h-1}}$ with a single
edge attached at one vertex, and of zero mean. Here the value 
of $\hat f$ at the unique vertex of $U$ of degree one equals 
$f(w)/\vert {\mathcal V}(Q)\vert$. The 
eigenvalue of $\hat f$ equals $\lambda_1(W)$.
Then $\lambda_1(W)\geq \lambda_1(U)$ and hence 
as before, we conclude that
$\lambda_1(W)\succeq 1/k^{2^h}$ for a universal constant $q$.
Together this shows that indeed $\lambda_1(E)\succeq 1/k^{2^h}$ as claimed.

Since we have established that 
$\lambda_1(D)\succeq 1/k^{2^h}$ and $\lambda_1(E)\succeq 1/k^{2^h}$, we get 
$\lambda_1(G)\asymp \lambda_1(H)\succeq 1/k^{2^h}$. 
This shows the proposition.
\end{proof}

\begin{remark}  The constant $q(h)$ in Proposition \ref{cactusone} can be made 
effective. However, this would require a considerable effort in bookkeeping.
Moreover, our proof would yield an exponential decay of $q(h)$ in $h$.
\end{remark}

%%%%%%%%%%%%%%%%%%%%%%%%%%%%%%%%%%%%%%%%%%%%%%%%%%%%%%%%%%%%%%%%%%%%%%%%%%%
%%%%%%%%%%%%%%%%%%%%%%%%%%%%%%%%%%%%%%%%%%%%%%%%%%%%%%%%%%%%%%%%%%%%%%%%%%%

%%%%%%%%%%%%%%%%%%%%%%%%%%%%%%%%%%%%%%%%%%%%%%%%%%%%%%%%%%%%%%%%%%%%%%%%%%%
%%%%%%%%%%%%%%%%%%%%%%%%%%%%%%%%%%%%%%%%%%%%%%%%%%%%%%%%%%%%%%%%%%%%%%%%%%%
\section{The smallest eigenvalue of mapping tori}\label{smallest}

In this section we use the results from Section \ref{arrays}
and Section \ref{thick} to prove Theorem \ref{thm1} from the 
introduction.

As explained in Section \ref{thick}, 
a hyperbolic 3-manifold $M$ can be decomposed
as $M=M_{\rm thick}\cup M_{\rm thin}$ where $M_{\rm thick},M_{\rm thin}$ are 
smooth manifolds with boundary $\partial M_{\rm thick},
\partial M_{\rm thin}$.
Each component of $M_{\rm thin}$ is 
a \emph{Margulis tube}. Such a tube $T$ is a tubular neighborhood
of a geodesic $\gamma$ in $M$ of length less than $2\epsilon$
where $\epsilon >0$ is a \emph{Margulis constant}
for hyperbolic 3-manifolds. 
The geodesic $\gamma$ is called 
the \emph{core geodesic} of the tube.

The thick part $M_{\rm thick}$ of $M$ is a 
smooth submanifold of $M$ with boundary. 
Thus the spectrum of $M_{\rm thick}$ with Neumann boundary conditions is defined.
This spectrum is discrete, with finite multiplicities. Constant functions are eigenfunctions
corresponding to the smallest eigenvalue $\lambda_0=0$. 
Let $\lambda_1(M_{\rm thick})$ be the smallest non-zero eigenvalue with Neumann
boundary conditions. We now evoke the main result of \cite{Man05}  to show

\begin{proposition}\label{firstbound}
For every $g\geq 2$ there is a number $c_3=c_3(g)>0$ with the following property.
 Let $M$ be a hyperbolic mapping torus of genus $g$; then 
\[\lambda_1(M_{\rm thick})\leq c_3/{\rm vol}(M)^{2^{2g-2}/(2^{2g-2}-1)}.\]
\end{proposition}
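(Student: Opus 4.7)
The plan is to chain together three tools developed earlier in the paper: the model for $M_{\rm thick}$ from Proposition \ref{model}, the eigenvalue bound for generalized arrays of circles from Proposition \ref{eigenvaluegraphs}, and the main theorem of \cite{Man05}, which compares the smallest positive eigenvalue of a bounded-geometry manifold to that of a uniformly quasi-isometric finite graph and which applies equally for compact manifolds with boundary under Neumann conditions (as recorded in \cite{LS12}).

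First, I would apply Proposition \ref{model} with the fixed Margulis constant $\epsilon>0$ to produce a generalized array of circles $G$ of depth at most $h=2g-2$ together with an $L(g,\epsilon)$-quasi-isometry $M_{\rm thick}\to G$. Since $M_{\rm thick}$ has sectional curvature $-1$ and injectivity radius at least $\epsilon$, its geometry is uniformly bounded in the sense required by \cite{Man05}. Applying that theorem yields a constant $b=b(g,\epsilon)>0$ with
\[\lambda_1(M_{\rm thick})\leq b\,\lambda_1(G).\]

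Second, I would plug $G$ into Proposition \ref{eigenvaluegraphs}. With $h=2g-2$ the exponent $2^h/(2^h-1)$ matches the one in the statement, so one obtains
\[\lambda_1(G)\leq \frac{256\pi^2\,h^{h-1}}{3\,{\rm vol}(G)^{2^{2g-2}/(2^{2g-2}-1)}}.\]

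The final step is a comparison of volumes. Because the quasi-isometry $M_{\rm thick}\to G$ has constants depending only on $g$ and $\epsilon$, and because every edge of $G$ carries unit length, the combinatorial volume of $G$ is comparable to ${\rm vol}(M_{\rm thick})$ with ratio depending only on $g$. Combined with the comparison ${\rm vol}(M)\leq C(g)\,{\rm vol}(M_{\rm thick})$ for hyperbolic mapping tori of fiber genus $g$ recorded in \cite{H16} (and alluded to in Remark \ref{nonseparating}), this gives ${\rm vol}(G)\geq c(g)\,{\rm vol}(M)$ for some $c(g)>0$. Substituting this lower bound for ${\rm vol}(G)$ into the previous display and absorbing all $g$-dependent constants into a single $c_3=c_3(g)$ will complete the argument.

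The only step that needs real input is the volume comparison ${\rm vol}(M)\leq C(g)\,{\rm vol}(M_{\rm thick})$; without it the argument would only bound $\lambda_1(M_{\rm thick})$ in terms of ${\rm vol}(M_{\rm thick})$, which is in general strictly smaller than ${\rm vol}(M)$, so the exponent $2^{2g-2}/(2^{2g-2}-1)>1$ would work against us. The content of this comparison is that in a fibered hyperbolic 3-manifold of fixed fiber genus the total volume of the Margulis tubes cannot exceed a genus-dependent multiple of the thick volume, and this is the main nontrivial ingredient drawn from outside the machinery built in Sections \ref{thick} and \ref{arrays}.
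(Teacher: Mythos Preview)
Your proposal is correct and follows essentially the same route as the paper: apply Proposition \ref{model}, then \cite{Man05} (with the Neumann-boundary remark from \cite{LS12}), then Proposition \ref{eigenvaluegraphs}, and finish with the volume comparison. One small point: the paper records the comparison ${\rm vol}(M_{\rm thick})\geq \tfrac{2}{3}{\rm vol}(M)$ as a universal fact (for a suitable Margulis constant) coming from the explicit geometry of Margulis tubes, not as something special to mapping tori of fixed fiber genus; so the constant there does not depend on $g$.
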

\begin{proof}
By Proposition \ref{model}, $M_{\rm thick}$ is $L$-quasi-isometric 
to a generalized
array $G$ of circles of depth at most $2g-2$ for a number $L=L(g)>0$ only
depending on $g$.
The main result of \cite{Man05} applies
to the Laplacian on manifolds with boundary and Neumann boundary condition
(see \cite{LS12} for a more precise statement along these lines) and shows that
there is a number $b>0$ only depending on $g$  
so that $\lambda_1(M_{\rm thick})\leq b \lambda_1(G)$. 

Proposition \ref{eigenvaluegraphs} yields that 
\[\lambda_1(G)\leq 256\pi^2(2g-2)^{2g-3}/{\rm vol}(G)^{2^{2g-2}/(2^{2g-2}-1)}.\]
 The proposition now follows from the fact that
 ${\rm vol}(G)\sim {\rm vol}(M_{\rm thick})$ (by uniform quasi-isometry) and
 ${\rm vol}(M_{\rm thick})\geq \frac{2}{3} {\rm vol}(M)$ for 
 a suitable choice of a Margulis constant 
(by the explicit description of the metric in a Margulis tube, 
see \cite{H16} for a more comprehensive
 discussion).
\end{proof}

The following is shown in \cite{H16}.

\begin{proposition}\label{compare}
There exists a number $d>0$ and a suitable choice of a 
Margulis constant such that
\[\frac{1}{48}\lambda_1(M_{\rm thick})\leq
\lambda_1(M)\leq d\log {\rm vol}(M_{\rm thin})\lambda_1(M_{\rm thick})\] 
for every hyperbolic 3-manifold $M$. 
\end{proposition}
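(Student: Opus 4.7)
The plan is to prove the two inequalities separately. The lower bound is essentially variational, while the upper bound requires extending eigenfunctions from $M_{\rm thick}$ into the Margulis tubes in a controlled way.

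For the lower bound $\lambda_1(M) \geq \tfrac{1}{48}\lambda_1(M_{\rm thick})$, I would begin with an $L^2$-normalized eigenfunction $\phi$ on $M$ for $\lambda_1(M)$ and use $f := \phi|_{M_{\rm thick}} - c$ as a test function on $M_{\rm thick}$, where $c$ is the average of $\phi$ over $M_{\rm thick}$. By construction $f$ has zero mean on $M_{\rm thick}$, and $\int_{M_{\rm thick}} |\nabla f|^2 \leq \int_M |\nabla \phi|^2 = \lambda_1(M)$. The substantive step is to show $\int_{M_{\rm thick}} f^2 \geq \tfrac{1}{48} \int_M \phi^2$; that is, the $L^2$-mass of $\phi$ is not too concentrated in $M_{\rm thin}$. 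On each Margulis tube $T$, write $\phi = \phi_h + (\phi - \phi_h)$ with $\phi_h$ the harmonic extension of $\phi|_{\partial T}$. Since $\phi - \phi_h$ vanishes on $\partial T$, the first Dirichlet eigenvalue of $T$ (which has a universal lower bound, provided $\epsilon$ is suitably chosen) controls $\int_T (\phi - \phi_h)^2$ by $\int_T |\nabla \phi|^2$, while $\int_T \phi_h^2$ is controlled by $\int_{\partial T} \phi^2$ using the explicit tube geometry. Summing over the tubes and absorbing the cross terms yields the concrete constant $1/48$.

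For the upper bound, I would take an eigenfunction $f$ on $M_{\rm thick}$ with Neumann boundary conditions for $\lambda_1(M_{\rm thick})$, extend it to $\tilde f$ on $M$, and subtract its mean to produce a zero-mean test function. On each Margulis tube $T$ of radius $r_T$, use cylindrical coordinates $(s,\rho,\theta)$ around the core geodesic, so that $\partial T = \{\rho = r_T\}$, and decompose $f|_{\partial T}$ into Fourier modes in $\theta$. Extend the zero mode $f_0(s)$ by $\tilde f(s,\rho,\theta) = \chi(\rho) f_0(s)$ for a cutoff $\chi$ that vanishes near $\rho = 0$, and extend each higher mode $c_k(s) e^{ik\theta}$ by $\phi_k(\rho) c_k(s) e^{ik\theta}$, where $\phi_k$ is the harmonic radial profile vanishing at the core and equal to $1$ at $\rho = r_T$. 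A direct computation with the hyperbolic metric $d\rho^2 + \cosh^2\rho\, ds^2 + \sinh^2\rho\, d\theta^2$ gives $\int_T |\nabla \tilde f|^2 \leq C\, r_T \int_{\partial T}(f^2 + |\nabla_{\partial T} f|^2)$, with the factor $r_T$ arising from $\int_0^{r_T}\tanh\rho\, d\rho = \log\cosh r_T$.

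To obtain the stated bound, observe that a Margulis tube with core length $\ell_T$ has radius $r_T \sim \log(1/\ell_T)$ and volume ${\rm vol}(T) \sim 1/\ell_T$, so $r_T \sim \log {\rm vol}(T) \leq \log {\rm vol}(M_{\rm thin})$ for any nontrivial tube. The principal technical obstacle is the bookkeeping for the trace: one must control $\int_{\partial M_{\rm thick}}(f^2 + |\nabla f|^2)$ by the global $L^2$ and Dirichlet energies of $f$ on $M_{\rm thick}$ via a uniform trace inequality. This is available because $\partial M_{\rm thick}$ is a union of flat tori of universally bounded geometry (by the Margulis lemma and the choice of $\epsilon$), and these tori admit collar neighborhoods of uniformly bounded geometry inside $M_{\rm thick}$. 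Combining this trace control with the extension estimate, summing over all tubes, and subtracting the mean of $\tilde f$ gives $\mathcal{R}(\tilde f) \leq d \log {\rm vol}(M_{\rm thin}) \cdot \lambda_1(M_{\rm thick})$, as required.
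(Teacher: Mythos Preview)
The paper does not prove this proposition: the sentence immediately preceding it reads ``The following is shown in \cite{H16},'' and the result is simply quoted. So there is nothing in the paper to compare your argument against; what you have written is an independent attempt at the proof that \cite{H16} presumably contains.

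Your two-part variational strategy is the natural one, but there is a genuine gap in the upper bound. Your stated tube estimate $\int_T|\nabla\tilde f|^2 \leq C r_T\int_{\partial T}(f^2+|\nabla_{\partial T}f|^2)$ contains an $f^2$ term on the boundary. Feeding this through the trace inequality only gives $\int_{\partial T}f^2 \leq C\|f\|_{L^2(M_{\rm thick})}^2$, so the tube contribution to the Rayleigh quotient is of order $r_T$, not $r_T\,\lambda_1(M_{\rm thick})$; you would conclude $\lambda_1(M)\leq d\log{\rm vol}(M_{\rm thin})$ rather than the required $d\log{\rm vol}(M_{\rm thin})\cdot\lambda_1(M_{\rm thick})$. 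The source of the unwanted $f^2$ term is your cutoff $\chi(\rho)$ on the zero Fourier mode. The fix is to extend the zero mode $f_0(s)$ \emph{constantly} in $\rho$ (this is well defined at the core since $f_0$ is $\theta$-independent); then the only energy contribution from the zero mode is $\int|f_0'|^2\cdot\int_0^{r_T}\tanh\rho\,d\rho$, which involves only tangential derivatives and scales with $\lambda_1(M_{\rm thick})$ after the trace step. You will still need elliptic regularity (via Bochner and the Neumann condition) to control $\int_{\partial T}|\nabla_{\partial T}f|^2$ by the interior Dirichlet energy, which you gloss over.

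For the lower bound, your phrasing ``the $L^2$-mass of $\phi$ is not too concentrated in $M_{\rm thin}$'' is not quite the right target: what you need is $\int_{M_{\rm thick}}(\phi-c)^2\geq 1/48$, which also rules out $\phi$ being nearly constant on $M_{\rm thick}$ while oscillating inside the tubes. Your harmonic-extension decomposition on each tube is the right tool for this, but the specific constant $1/48$ will only emerge from the detailed computation in \cite{H16}.
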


We are now ready to show Theorem \ref{thm1} from the 
introduction.

\begin{corollary}\label{precise}
For every $g\geq 2$ there is a number $C_1=C_1(g)>0$
with the following properties.
\begin{enumerate}
\item 
\[\lambda_1(M)\leq C_1\log {\rm vol}(M)/{\rm vol}(M)^{2^{2g-2}/(2^{2g-2}-1)}.\]
for every hyperbolic mapping torus $M$ of genus $g$. 
\item There exists a sequence $M_i$ of hyperbolic mapping tori of genus $g$
with ${\rm vol}(M_i)\to \infty$ and
such that 
\[\lambda_1(M_i)\geq C_1^{-1} /{\rm vol}(M_i)^{2^{2g-2}/(2^{2g-2}-1)}.\]
\end{enumerate}
\end{corollary}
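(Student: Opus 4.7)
The plan is to assemble both parts of the corollary from the machinery already built in Sections \ref{thick}--\ref{smallest}. Neither part requires any new technical input; both are mechanical combinations of previously stated results, so I will emphasize how the estimates chain together rather than grinding through the arithmetic.

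For part (1), I would start from Proposition \ref{firstbound}, which gives
\[\lambda_1(M_{\rm thick}) \leq c_3/{\rm vol}(M)^{2^{2g-2}/(2^{2g-2}-1)}\]
for every hyperbolic mapping torus $M$ of genus $g$. I would then feed this into the upper bound in Proposition \ref{compare},
\[\lambda_1(M) \leq d\log {\rm vol}(M_{\rm thin})\cdot \lambda_1(M_{\rm thick}),\]
and use the obvious estimate ${\rm vol}(M_{\rm thin})\leq {\rm vol}(M)$ to absorb the logarithmic factor into one depending only on ${\rm vol}(M)$. Taking $C_1$ to be $d\cdot c_3$ (plus a harmless adjustment so that the statement is trivial when ${\rm vol}(M)$ is bounded below) completes the upper bound.

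For part (2), I would invoke Proposition \ref{example} to produce, for each integer $k\geq 2$, a hyperbolic mapping torus $M_k$ of genus $g$ such that $(M_k)_{\rm thick}$ is $c_1(g)$-quasi-isometric to the optimal step-homogeneous array of circles $G_k$ of depth $2g-2$ with base circle of length $k$. From the volume recursion in the proof of Proposition \ref{cactusone} we have ${\rm vol}(G_k)\asymp k^{2^{2g-2}-1}$, and via the quasi-isometry together with the comparison ${\rm vol}(M_{\rm thick})\geq \tfrac{2}{3}{\rm vol}(M)$ cited in the proof of Proposition \ref{firstbound}, this yields ${\rm vol}(M_k)\asymp k^{2^{2g-2}-1}\to\infty$. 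I would then apply Proposition \ref{cactusone} to obtain
\[\lambda_1(G_k)\geq q(2g-2)/{\rm vol}(G_k)^{2^{2g-2}/(2^{2g-2}-1)},\]
and use the main result of \cite{Man05} (in the lower bound direction, applicable to manifolds with boundary and Neumann boundary conditions as noted in the proof of Proposition \ref{firstbound}) to transfer this to
\[\lambda_1((M_k)_{\rm thick})\geq q'/{\rm vol}(M_k)^{2^{2g-2}/(2^{2g-2}-1)}\]
with a constant $q'>0$ depending only on $g$. Finally, the lower bound half of Proposition \ref{compare},
\[\lambda_1(M_k)\geq \tfrac{1}{48}\lambda_1((M_k)_{\rm thick}),\]
produces the bound claimed in (2). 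Taking $C_1$ large enough to absorb the constants from both parts gives the corollary.

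The main obstacle is really just bookkeeping: the substantive work was done in Section \ref{thick} (the generalized arrays of circles model) and Section \ref{arrays} (the matching upper and lower eigenvalue bounds for arrays), plus the Mantuano quasi-isometry invariance of $\lambda_1$. The only place where one has to be slightly careful is in confirming that the Mantuano comparison applies in both directions for the thick part with Neumann conditions, and that the volume of $M_k$ and of the corresponding graph $G_k$ are comparable up to a constant depending only on $g$; both of these points are explicitly handled in the proof of Proposition \ref{firstbound}, so no further work is needed here.
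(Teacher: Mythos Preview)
Your proposal is correct and follows essentially the same route as the paper: part (1) is the immediate combination of Proposition~\ref{firstbound} with the upper inequality of Proposition~\ref{compare}, and part (2) chains Proposition~\ref{example}, Proposition~\ref{cactusone}, the Mantuano comparison \cite{Man05}, and the lower inequality of Proposition~\ref{compare} exactly as the paper does. Your bookkeeping remarks (the ${\rm vol}(M_{\rm thin})\leq {\rm vol}(M)$ absorption and the two-sided volume comparison ${\rm vol}(M_k)\asymp {\rm vol}(G_k)$) are the only points the paper leaves implicit, and you handle them correctly.
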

\begin{proof} The first part of the corollary is immediate 
from Proposition \ref{firstbound} 
and from 
Proposition \ref{compare}.  

To show the second part, 
let $g\geq 2$ be arbitrary. By Proposition \ref{example}, there
exists a sequence $M_i$ of mapping
tori of genus $g$ so that ${\rm vol}(M_i)\to \infty$ and that for each $i$, 
$(M_i)_{\rm thick}$ is uniformly quasi-isometric to an optimal 
step homogeneous array of circles $G_i$ of depth $2g-2$.

By Proposition \ref{cactusone}, the first eigenvalue of the array $G_i$ is 
not smaller than $\hat q/{\rm vol}(G_i)^{2^{2g-2}/2^{2g-2}-1}$ where $\hat q = \hat q(2g-2)$ is a universal 
constant.

Using once more the main result of \cite{Man05} 
and the volume comparison 
\[{\rm vol}(G_i)\asymp {\rm vol}((M_i)_{\rm thick})\geq 
\frac{2}{3}{\rm vol}(M_i)\] 
for a suitable choice of a Margulis constant, 
we conclude that there is 
a universal constant $c^\prime>0$ so that 
\[\lambda_1((M_i)_{\rm thick})\geq c^\prime/{\rm vol}(M_i)^{2^{2g-2}/(2^{2g-2}-1)}.\]
The second part of the
corollary now follows from the first inequality of 
Proposition \ref{compare}.
\end{proof}

 We complete this section 
by estimating the smallest eigenvalue of mapping tori
 defined by periodic Teichm\"uller geodesics in moduli space
${\mathcal M}(S)$ 
which spend a definite proportion of time in the
$\epsilon$-thick part ${\mathcal M}(S)_\epsilon$ 
of surfaces with injectivity radius
bigger than $\epsilon$. Note that ${\mathcal M}(S)_\epsilon$
is the quotient of the $\epsilon$-thick part of Teichm\"uller space
under the action of the mapping class group.

 \begin{proposition}\label{thickpart}
 For sufficiently small $\epsilon >0$ 
there exists a number $b=b(g,\epsilon)>0$ with the following 
property. Let 
$S_R^1$ be the circle of length $R>0$ and 
let $\gamma:S^1_R\to {\mathcal M}(S)$ 
be a periodic Teichm\"uller geodesic of length $R$. 
Let $p=3c_2(g,\epsilon)>0$ be as in Proposition \ref{control}
and 
let $Q=\{t\in S^1_R\mid
 \gamma(t-p,t+p)\subset {\mathcal M}(S)_\epsilon\}$; if the 
Lebesgue measure of $Q$ 
 is at least $\zeta R$ for some $\zeta\in (0,1)$ then 
 \[\lambda_1(M)\leq \frac{b}{\zeta^2{\rm vol}(M)^2}.\]
 \end{proposition}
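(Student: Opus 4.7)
The plan is to construct an explicit zero-mean test function on $M$ whose Rayleigh quotient is $O(1/(\zeta R)^2)$, by assembling a ``thick arc-length'' circle coordinate from the product-like pieces of Proposition \ref{control} and pulling back a sinusoid.

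First, decompose $Q$ into its maximal open subintervals $I_j = (\alpha_j,\beta_j)$, $j=1,\ldots,n$, with $\sum_j |I_j| \geq \zeta R$. A point $t \in S^1_R\setminus Q$ exists only because some $s\in(t-p,t+p)$ has $\gamma(s)\notin\mathcal{M}(S)_\epsilon$, and any such $s$ forces the entire $(s-p,s+p)$ to lie outside $Q$. Consequently, if two adjacent intervals $I_j,I_{j+1}$ satisfied $\alpha_{j+1}-\beta_j<2p$, the thickened intervals $(\alpha_j-p,\beta_j+p)$ and $(\alpha_{j+1}-p,\beta_{j+1}+p)$ would overlap with their union entirely in $\mathcal{M}(S)_\epsilon$, forcing the gap into $Q$ -- contradiction. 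Hence all gaps are $\geq 2p = 6c_2$, so the padded segments $[\alpha_j-c_2,\beta_j+c_2]$ are pairwise disjoint and each still lies in $\mathcal{T}(S)_\epsilon$. Applying Proposition \ref{control} to each yields pairwise disjoint embedded submanifolds $N_j\subset M$ with smooth surjections $\pi_j: N_j \to [\alpha_j-c_2,\beta_j+c_2]$ of uniformly bounded derivative, boundary two bounded-geometry copies of $S$, and ${\rm vol}(N_j) \asymp \ell_j := |I_j|+2c_2$.

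Set $L = \sum_j \ell_j \geq \zeta R$. Cyclically ordering the intervals along $S^1_R$ and concatenating the $\pi_j$'s, I would define a coordinate $\sigma : \bigcup_j N_j \to \mathbb{R}/L\mathbb{Z}$ so that each $N_j$ is parametrized by a consecutive arc of length $\ell_j$. The test function is $F = g\circ\sigma$ on $\bigcup_j N_j$ with $g(s) = \sin(2\pi s/L)$, extended continuously across each transition region $M\setminus\bigcup_j N_j$ by interpolating between its two boundary values. Since $|g'|\leq 2\pi/L$ and $\sigma$ is Lipschitz with constant depending only on $g,\epsilon$, one computes $\int_{\bigcup N_j}|\nabla F|^2 \lesssim L\cdot(2\pi/L)^2 = 4\pi^2/L$, and the contribution from the transitions is of the same order provided the interpolation is carried out across the bounded-diameter thin pieces. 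For the denominator, $\int_M F^2 \geq \int_{\bigcup N_j} g(\sigma)^2\, d{\rm vol} \gtrsim L/2$, and the mean $\bar F$ is small because $g$ has zero mean on the circle and the transitions contribute only bounded extra volume per Margulis tube. Thus the Rayleigh quotient of $F-\bar F$ is $O(1/L^2) \leq O(1/(\zeta R)^2)$; combined with the standard estimate ${\rm vol}(M) \leq C(g) R$ for hyperbolic mapping tori this yields $\lambda_1(M) \leq b/(\zeta^2 {\rm vol}(M)^2)$.

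The main obstacle I expect is the bookkeeping across the transition regions: choosing the extension of $F$ there smoothly enough that its gradient contribution remains $O(1/L)$, and verifying that the zero-mean correction $F-\bar F$ preserves the lower bound on $\int_M F^2$. Both reduce to a controlled comparison between ${\rm vol}(M)$, the ``thick'' volume $\sum_j {\rm vol}(N_j) \asymp L$, and the Teichmüller length $R$, together with the fact that the number of transition regions is at most $R/(2p)$ and each contains bounded volume.
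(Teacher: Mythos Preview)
Your approach is essentially the paper's: build a ``thick circle coordinate'' $f:M\to [0,u]$ from the $N_j$ pieces (constant on the complementary regions $P_j$) and pull back trigonometric test functions, then use ${\rm vol}(M)\lesssim R$ via Brock's Weil--Petersson comparison. The paper differs from your proposal in one clean technical choice: rather than a single sinusoid plus a zero-mean correction, it uses \emph{two} half-sines $\alpha,\beta$ supported on $[0,u/2]$ and $[u/2,u]$ and invokes the Minmax principle $\lambda_1(M)\le\max\{\mathcal R(\alpha\circ f),\mathcal R(\beta\circ f)\}$. This dispenses with the mean altogether, so the ``bookkeeping'' you flag as the main obstacle simply does not arise.

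Your route also works, but your stated justification for the mean correction contains a genuine error. The complementary pieces $P_j=M\setminus\bigcup N_j$ are \emph{not} ``bounded-diameter thin pieces'' and do \emph{not} have ``bounded volume per Margulis tube'': they correspond to stretches of $\gamma$ in the thin part of moduli space, which can be arbitrarily long and carry arbitrarily large volume. What actually saves you is something you did not say: since $\sigma$ sends adjacent $N_j$'s to \emph{consecutive} arcs of $\mathbb R/L\mathbb Z$, the two boundary values of $F$ on each $P_j$ agree, so the extension is constant and contributes zero gradient regardless of the size of $P_j$. For the denominator, one observes that for \emph{any} constant $c$,
\[
\int_{\bigcup N_j}\bigl(\sin(2\pi\sigma/L)-c\bigr)^2\,d{\rm vol}\ \gtrsim\ L,
\]
since a shifted sine over a full period still has $L^2$-average at least $1/2$; hence $\int_M(F-\bar F)^2\gtrsim L$ irrespective of $\bar F$ and of ${\rm vol}(P_j)$. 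With this in hand your argument goes through, but the paper's Minmax device is the more direct way to reach the same conclusion.
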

 \begin{proof} Let $c_2=c_2(g,\epsilon)>0$ be as in 
Proposition \ref{control}, let $p = 3 c_2$ and let $\zeta\in (0,1)$. 
Let $\gamma:S^1_R\to 
 {\mathcal M}(S)$ be a periodic Teichm\"uller 
geodesic as in the proposition for this number $\zeta$.
By continuity, the set $Q\subset S_R^1$ is open and hence 
it is a union of at most countably many open
intervals. By the definition of $Q$, the $p$-neighborhoods of these
intervals are pairwise disjoint. 
By assumption, the Lebesgue measure of $Q$ is at least $\zeta R$. 
Choose finitely many connected components $I_1,\dots,I_s\subset Q$ of
Lebesgue measure at least $\zeta R/2$. Assume that these intervals are linearly
ordered along $[0,R]$. 
Let $u_j$ be the length of $I_j$.

By Proposition \ref{control}, for each $j\leq s$ there exists a submanifold
$N_j$ of $M$ with smooth boundary which is diffeomorphic to 
$S\times [0,1]$. This diffeomorphism is chosen to be
compatible with the orientation of $S_R^1$ defined by 
the parametrization of $\gamma$. Thus the two boundary 
components of $N_j$ are naturally ordered. We denote by
$\partial N_j^-$ the component which is smaller for this order,
and by $\partial N_j^+$ the bigger component.
The submanifolds $N_j$ are pairwise
disjoint, and $M-\cup_i N_i$ consists of $s$ connected components
$P_1,\dots,P_s$ diffeomorphic to 
$S\times [0,1]$. 
We have $\partial P_i=\partial N_i^+\cup 
\partial N_{i+1}^-$.

For each $j$ 
there exists a smooth surjective map 
\[f_j:N_j\to [\sum_{i<j}u_i, \sum_{i\leq j}u_j]\]
of uniformly bounded derivative. 
Write $u=\sum_iu_i\geq \zeta R/2$ and define a function
$f:M\to [0,u]$ by $f\vert N_i=f_i$ and by the requirement that $f$ is constant on each of the
manifolds $P_j$. We can modify $f$ so that its derivative is uniformly bounded.
Define functions $\alpha, \beta$ on $[0,u]$ as 
\begin{equation}
\alpha(s)=
\begin{cases}
\sin(\pi s/ u), &\text{if $0\leq s\leq u/2$;}\\
0 &\text{if $u/2\leq s\leq u$.}
\end{cases}
\notag \end{equation}
and 
\begin{equation}
\beta(s)=
\begin{cases}
0, &\text{if $0\leq s\leq u/2$;}\\
\sin(\pi (s-u/2)/ u) &\text{if $u/2\leq s\leq u$.}
\end{cases}
\notag \end{equation}
Then $\alpha \circ f, \beta \circ f$ are smooth, with supports intersecting
in a zero volume set,   
and their Rayleigh quotients 
are uniformly equivalent to $1/u^2$. To this end note that the
Rayleigh quotients of 
$\alpha, \beta$ are 
$\pi^2/u^2$, and since $f$ has uniformly bounded derivative, 
the Rayleigh quotients of $\alpha, \beta$ are uniformly equivalent 
to the Rayleigh quotients of 
$\alpha \circ f, \beta \circ f$.

By the Minmax-theorem for the spectrum of the Laplacian, we know
that for any set of functions $\rho_0,\dots,\rho_k:M\to \mathbb{R}$
whose supports pairwise intersect on zero-volume sets,
we have $\lambda_k\leq\max\{{\mathcal R}(\rho_i)\mid
0\leq i\leq k\}$ (compare \cite{White}) and therefore
$\lambda_1(M) \leq \max\{{\mathcal R}(\alpha \circ f), {\mathcal R}(\beta \circ f)\}$. 
As a result, $\lambda_1(M) \leq d/u^2$ where $d>0$ is a constant only depending on $g, \epsilon$.

We are left with showing that for fixed $\zeta$, the volume of $M$ is uniformly 
equivalent to 
$u$. To this end we evoke from \cite{B03,BB14,KMcS14}
that the volume of $M$ is equivalent to the translation
length  for the Weil-Petersson metric
of the pseudo-Anosov element defining $\gamma$,
and this translation length is bounded from above by the 
length of $\gamma$ for the Teichm\"uller metric up to a factor 
which only depends on $g$
(see e.g. \cite{KMcS14}). Thus the volume 
of $M$ is bounded from above by $\chi R$ where $\chi>0$ only depends on 
$\zeta,g,\epsilon$ and is linear in $\zeta$. 
Since $u \geq \zeta R/2$, the proposition follows.  
\end{proof}

%%%%%%%%%%%%%%%%%%%%%%%%%%%%%%%%%%%%%%%%%%%%%%%%%%%%%%%%%%%%%%%%%%%%%%%%%%%
%%%%%%%%%%%%%%%%%%%%%%%%%%%%%%%%%%%%%%%%%%%%%%%%%%%%%%%%%%%%%%%%%%%%%%%%%%%
\section{Typical mapping tori} \label{typical}

The goal of this section is to show that a typical mapping torus satisfies the hypothesis
in Proposition \ref{thickpart} for a number $\zeta>0$ only depending on the genus for some fixed
sufficiently small number $\epsilon>0$. We then evoke Proposition \ref{thickpart} to conclude
the proof of Theorem \ref{thm2} for typical mapping tori.

Let $\mathcal{G}(L)$ be the set of all conjugacy classes of 
pseudo-Anosov mapping classes (in short: p-A mapping classes) 
on $S$ whose translation length on Teichm\"uller space 
${\mathcal T}(S)$ is less than $L$. 
It is known that $\mathcal{G}(L)$ is finite for any fixed $L > 0$. 
Up to isometry, a hyperbolic mapping torus only depends on the
conjugacy class of the defining pseudo-Anosov element. 
We say a typical mapping torus (or a typical p-A conjugacy class) 
satisfies a property $(\ast)$ if 
$$ \dfrac{|\{ \phi \in \mathcal{G}(L) : \phi \mbox{ satisfies property } (\ast) \}|}{|\mathcal{G}(L)|} \to 1 $$ as $L 
\to \infty$. 
In this section we prove the following.
\begin{proposition}
\label{prop:thickpartportion-typical}
Let $U\subset {\mathcal T}(S)$ be an open ${\rm Mod}(S)$-invariant
set which contains the axis of at least one pseudo-Anosov element. 
For each $p > 0$, there exists 
$\delta=\delta(U,p)>0$ with the following property. 
The proportion of time along an axis $\gamma$ of 
a typical pseudo-Anosov element consisting of
points $\gamma(t)$ so that the segment $\gamma[t-p,t+p]$ is entirely contained
in $U$ is at least $\delta$.
\end{proposition}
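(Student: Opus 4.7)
The strategy will be to reformulate the condition in the statement as the hitting statistic of a single open, flow-invariant subset of the moduli space of unit-area quadratic differentials $Q\mathcal{M}(S)$, and then invoke equidistribution of closed Teichm\"uller geodesics with respect to the Masur--Veech measure $\mu_{MV}$.

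First I would set things up at the level of the Teichm\"uller flow. Let $A \subset Q\mathcal{T}(S)$ denote the preimage of $U$ under the projection $Q\mathcal{T}(S) \to \mathcal{T}(S)$; this is open and $\mathrm{Mod}(S)$-invariant. Let $\Phi_t$ denote the Teichm\"uller geodesic flow and set
\[ A_p := \{v \in A : \Phi_t v \in A \text{ for all } t \in [-p,p]\}. \]
Continuity of $\Phi_t$ and compactness of $[-p,p]$ make $A_p$ open, and it is $\mathrm{Mod}(S)$-invariant by construction. Write $\bar A_p$ for its image in $Q\mathcal{M}(S)$. For a pseudo-Anosov $\phi$ with axis $\gamma_\phi$, the unit tangent field along $\gamma_\phi$ descends to a closed orbit of $\Phi_t$ in $Q\mathcal{M}(S)$ of length $\ell(\phi)$; the condition $\gamma_\phi(t-p,t+p)\subset U$ at time $t$ is precisely that the corresponding tangent vector in $Q\mathcal{M}(S)$ lies in $\bar A_p$. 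Thus the proportion in the statement equals $\nu_\phi(\bar A_p)$, where $\nu_\phi$ is the normalized arc-length probability measure on this closed orbit. Moreover, since $U$ contains the axis $\gamma_{\phi_0}$ of some pseudo-Anosov $\phi_0$ and this axis is $\Phi_t$-invariant, every unit tangent vector to $\gamma_{\phi_0}$ lies in $A_p$, so $\bar A_p$ is a nonempty open subset of $Q\mathcal{M}(S)$ and $\mu_{MV}(\bar A_p) > 0$.

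The heart of the argument will be to apply the Eskin--Mirzakhani equidistribution theorem for closed Teichm\"uller geodesics, in the following form: for every continuous $f : Q\mathcal{M}(S) \to \mathbb{R}$ and every $\varepsilon > 0$,
\[ \frac{|\{\phi \in \mathcal{G}(L) : |\nu_\phi(f) - \mu_{MV}(f)| > \varepsilon\}|}{|\mathcal{G}(L)|} \longrightarrow 0 \quad (L \to \infty). \]
By outer regularity of $\mu_{MV}$ together with openness of $\bar A_p$, I would choose a continuous $f : Q\mathcal{M}(S) \to [0,1]$ supported in $\bar A_p$ with $\mu_{MV}(f) \geq \tfrac{1}{2}\mu_{MV}(\bar A_p)$, and set $\delta := \tfrac{1}{4}\mu_{MV}(\bar A_p)$. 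The displayed assertion then forces $\nu_\phi(\bar A_p) \geq \nu_\phi(f) \geq \delta$ for all but a vanishing proportion of $\phi \in \mathcal{G}(L)$, completing the proof.

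The main obstacle is the typicality upgrade in the last step: the average form $|\mathcal{G}(L)|^{-1}\sum_\phi \nu_\phi \to \mu_{MV}$ is classical, but controlling the exceptional set of $\phi$ where $\nu_\phi(f)$ deviates from its mean requires a variance or large-deviation estimate. This should be extractable from the fine counting argument of Eskin--Mirzakhani, or alternatively from a symbolic coding of the Teichm\"uller flow together with large-deviation bounds for Gibbs measures on the coding subshift of finite type.
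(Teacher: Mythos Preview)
Your setup is essentially identical to the paper's: lift $U$ to the bundle $Q^1(S)$ of unit-area quadratic differentials, define the open, ${\rm Mod}(S)$-invariant, flow-thickened set
\[
V=\bar A_p=\{v: \Phi_t v\in A\text{ for all }|t|\le p\},
\]
observe that it is nonempty because it contains the full closed orbit coming from the axis of $\phi_0$, and rewrite the proportion in the statement as $\nu_\phi(V)$. The paper then invokes Hamenst\"adt's Bowen-type equidistribution \cite{Hamenstaedt13} (rather than Eskin--Mirzakhani) in the averaged form
\[
\liminf_{L\to\infty}\frac{1}{L\,|\mathcal G(L)|}\sum_{\gamma\in\mathcal G(L)} l(\gamma\cap V)=\lambda(V),
\]
and concludes in one line with $\delta=\lambda(V)/2$.

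Your caveat about the ``typicality upgrade'' is exactly on point, and in fact applies verbatim to the paper's own argument as written: the displayed identity is a statement about the \emph{average} of the $[0,1]$-valued quantities $l(\gamma\cap V)/L$, and an average equal to $\lambda(V)\in(0,1)$ is formally compatible with a fixed positive fraction of orbits having $l(\gamma\cap V)=0$. So passing from this to ``proportion $\to 1$'' genuinely requires the extra variance or large-deviation input you describe; the paper does not spell this out and is implicitly leaning on more than the single formula quoted from \cite{Hamenstaedt13}. Your suggested remedies (fine counting \`a la Eskin--Mirzakhani, or symbolic coding plus large deviations for Gibbs measures) are the natural ways to close this, and a Bowen--type coding is indeed what underlies \cite{Hamenstaedt13}. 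In short: same approach as the paper, and you have correctly isolated the one step that both arguments leave to the cited literature.
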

To prove this proposition, we will use the 
equidistribution of closed orbits of the Teichm\"uller geodesic flow in the space of unit area quadratic differentials, 
obtained in \cite{Hamenstaedt13}. 
Let $\widetilde{Q^{1}(S)}\to {\mathcal T}(S)$ 
be the bundle of unit area quadratic differentials, 
which can be identified with the unit cotangent bundle to $\T(S)$, 
and let $Q^{1}(S)=\widetilde{Q^{1}(S)}/\Mod(S)$. 

The \emph{Teichm\"uller flow}Ê
$\Phi^{t}:Q^{1}(S)\to Q^{1}(S)$ acts on $Q^1(S)$ preserving a
finite measure $\lambda$ in the Lebesgue measure class, the 
\emph{Masur-Veech measure}. The measure $\lambda$ has full support.

We can identify conjugacy classes of pseudo-Anosov elements with closed orbits of the 
Teichm\"uller geodesic flow.

For a closed orbit $\gamma$ let $l(\gamma)$ denote its length. 

Let $\delta_\gamma$ be the standard flow-invariant Lebesgue measure
on $\gamma$ of total mass $l(\gamma)$.

For a Borel subset $A$ of ${\mathcal Q}^1(S)$ let 
$l(A)=\delta_{\gamma}(A)$. 

%This is independent of the
%choice of a parametrization of $\gamma$.
For each $L>0$ we may define a measure on ${\mathcal Q}^1(S)$ by $$\lambda_{L}=\frac{1}{L|\mathcal{G}(L)|}\sum_{\gamma\in \mathcal{G}(L)}\delta_{\gamma}.$$
The main result of \cite{Hamenstaedt13} shows:

\begin{lemma} \label{equid}
The measures  
$$\lambda_{L}=\frac{1}{L|\mathcal{G}(L)|}\sum_{\gamma\in \mathcal{G}(L)}\delta_{\gamma}$$ weakly converge to $\lambda$ as $L\to \infty$.
\end{lemma}
%The main result of \cite{Hamenstaedt13} shows that the measures defined by 
%$$\lambda_{L}=\frac{1}{L|\mathcal{G}(L)|}\sum_{\gamma\in \mathcal{G}(L)}\delta_{\gamma}$$ weakly converge to $\lambda$.
By the classical Portmanteau theorem Lemma \ref{equid} can be rephrased as follows.
\begin{lemma}\label{equidopen}
For any Borel set $V\subset Q^{1}(S)$ whose boundary has measure zero we have 

$$\lim_{L\to \infty} \frac{1}{L\vert {\mathcal G}(L)\vert}
\sum_{\gamma\in \mathcal{G}(L)}l(\gamma\cap V)=\lambda(V)$$

%$$\lambda(V) \leq \lim \inf_{L\to \infty} \frac{1}{L\vert {\mathcal G}(L)\vert} \sum_{\gamma\in \mathcal{G}(L)}l(\gamma\cap V)$$ and 
%$$\lim \sup_{L\to \infty} \frac{1}{L\vert {\mathcal G}(L)\vert}\sum_{\gamma\in \mathcal{G}(L)}l(\gamma\cap V)\leq \lambda(\overline{V})$$
\end{lemma}

We will use Lemma \ref{equid} together with the ergodicity of the Teichm\"uller geodesic flow to prove the following.

\begin{proposition}\label{typicalequid}
Let $U\subset Q^{1}(S)$ be a Borel subset whose boundary has Lebesgue measure zero.
Then for any $\epsilon>0$ a typical Teichm\"uller geodesic spends a proportion between $(1-\epsilon)\mu(U)$ and $(1+\epsilon)\mu(U)$ in $U$.
\end{proposition}

\begin{proof}
 It suffices to prove that for each $\epsilon>0$, a typical closed orbit spends a proportion at least $(1-\epsilon)\mu(U)$ in $U$ (the upper bound can then be obtained by replacing $U$ with its complement).
 Fix $\epsilon>0$.
For each $L>0$ let $A(L)\subset \mathcal{G}(L)$ denote the set corresponding to closed orbits of length at most $L$
that spend a proportion at most $(1-\epsilon)\lambda(U)$ in $U$. Define for each $L>0$ a finite measure $$\kappa_{L}=\frac{1}{L|\mathcal{G}(L)|}\sum_{\gamma\in \mathcal{G}(L)}\delta_{\gamma}.$$
To prove Proposition \ref{typicalequid} it suffices to prove that the measures $\kappa_{L}$ weakly converge to zero.
Note, $\kappa_{L}\leq \lambda_{L}$ so by Lemma \ref{equid} any subsequence of the $\kappa_{L}$ has an weak accumulation point, which is a finite measure on $Q^{1}(S)$.

Let $\kappa$ be the weak limit of $\kappa_{L_i}$ for some sequence $L_i$. 
As $\kappa_{L}(U)\leq (1-\epsilon)\kappa_{L}({Q^{1}(S)})$ for each $L$ and the boundary of $U$ has measure $0$ we have $$\kappa(U)\leq (1-\epsilon)\kappa ({Q^{1}(S)}).$$
Since the $\kappa_L$ are $\Phi^t$ invariant so is $\kappa$. Thus, by ergodicity of the Teichm\"uller flow with respect to $\lambda$ we have $\kappa=c\lambda$ for some $c=\kappa(Q^{1}(S))\geq 0$. 
Then $\kappa(U)=\lambda(U)\kappa ({Q^{1}(S)})$, providing a contradiction and completing the proof.
\end{proof}

We now conclude the proof of
Proposition \ref{prop:thickpartportion-typical}.
\begin{proof}[Proof of Proposition \ref{prop:thickpartportion-typical}]
Let $U\subset {\mathcal T}(S)$ be a nonempty open set containing an axis of a pseudo-Anosov element.
Let $W$ be the preimage of $U$ in $\widetilde{Q^{1}(S)}$ and $V$ the image of $W$ in $Q^{1}(S)$.
For $T>0$, let $V_{T}\subset {Q^{1}(S)}$ be the subset of $q$ such that $\Phi^{t}q\in V$ for all $t\in (-T,T).$ 
By finiteness of $\lambda$ we know that $\lambda(\partial V_{T})=0$ for all but countable many $T$, so for given $p>0$ we may find a $T>p$ with this property. Then by Proposition \ref{typicalequid} for any $\epsilon>0$ a typical closed orbit of the Teichm\"uller flow spends a proportion at least $(1-\epsilon)\mu(V_T)$ in $V$. Thus, the proportion of time along an axis $\gamma$ of 
a typical pseudo-Anosov element consisting of
points $\gamma(t)$ so that the segment $\gamma[t-p,t+p]$ is entirely contained
in $U$ is at least $\mu(V_T)$. 
Since $U$ is open so is $V_T$, and since $U$ contains an axis of a pseudo-Anosov element we know $U_T$ is nonempty so $\mu(V_T)>0$. This completes the proof with 
$\delta(p)=\lambda(V_T)/2>0$.
\end{proof} 

%Let $U\subset {\mathcal T}(S)$ be a set as in the proposition let $\tilde C\subset \widetilde{Q^1(S)}$ be the preimage of $U$.
%By invariance of $U$ under the action of ${\rm Mod}(S)$, the set $\tilde C$ projects to an open subset of $Q^1(S)$ which
%contains a periodic orbit $\gamma$ for the Teichm\"uller flow.

As a result, we obtain Theorem \ref{thm2} for typical mapping tori.

\begin{corollary}
For every $g\geq 2$ there exists a number $\kappa=\kappa(g)>0$ so that 
\[\lambda_1(M)\in [\frac{1}{\kappa(g){\rm vol}(M)^2},\frac{\kappa(g)}{{\rm vol}(M)^2}]\]
for a typical mapping torus of genus $g$.
\end{corollary}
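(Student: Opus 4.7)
The plan is to assemble three prior results: Schoen's universal lower bound \eqref{lower}, Proposition \ref{thickpart} (the volume-squared upper bound for mapping tori whose defining axis spends a definite proportion of time in the thick part of moduli space), and Proposition \ref{prop:thickpartportion-typical} (typicality of this behavior). The inequality \eqref{lower} already gives $\lambda_1(M)\geq b_1/\mathrm{vol}(M)^2$ for every closed hyperbolic $3$-manifold, so only the upper bound requires argument.

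First I would fix once and for all a small $\epsilon>0$ for which $\mathcal{T}(S)_\epsilon$ contains the axis of at least one pseudo-Anosov mapping class. Such an $\epsilon$ exists: take any pseudo-Anosov $\phi_0$; its axis projects to a compact periodic orbit in $\mathcal{M}(S)$, so has a positive lower bound on injectivity radius, and one chooses $\epsilon$ smaller than this bound. Set $U=\mathcal{T}(S)_\epsilon$. Since $\mathrm{Mod}(S)$ acts on $\mathcal{T}(S)$ by isometries preserving injectivity radius, $U$ is an open $\mathrm{Mod}(S)$-invariant subset, and by construction $U$ contains at least one pseudo-Anosov axis.

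Next let $p=3c_2(g,\epsilon)>0$ be the constant appearing in the statement of Proposition \ref{thickpart}. Applying Proposition \ref{prop:thickpartportion-typical} with this choice of $U$ and $p$ yields $\delta=\delta(U,p)>0$ such that a typical pseudo-Anosov mapping class $\phi$ satisfies
\[
\mathrm{Leb}\bigl(\{t\in[0,\ell(\phi)) : \gamma_\phi[t-p,t+p]\subset U\}\bigr)\;\geq\;\delta\,\ell(\phi).
\]
Since $U$ is $\mathrm{Mod}(S)$-invariant, this condition descends to the periodic Teichmüller geodesic obtained by projecting $\gamma_\phi$ to $\mathcal{M}(S)$: the projected closed geodesic of length $R=\ell(\phi)$ has its set $Q=\{t\in S^1_R : \gamma(t-p,t+p)\subset\mathcal{M}(S)_\epsilon\}$ of Lebesgue measure at least $\delta R$. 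Proposition \ref{thickpart} with $\zeta=\delta$ then yields $\lambda_1(M)\leq b/(\delta^2\,\mathrm{vol}(M)^2)$ for the corresponding typical mapping torus $M$. Setting $\kappa(g)=\max\{b/\delta^2,\,1/b_1\}$ completes the proof.

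The argument is essentially one of assembly, and no step is genuinely difficult at this point in the paper: the heavy lifting has already been done in Proposition \ref{thickpart} (which uses the Minsky/Brock--Canary--Minsky model together with the Minmax principle and the volume--translation-length comparison for the Weil--Petersson metric) and in Proposition \ref{prop:thickpartportion-typical} (whose proof invokes the equidistribution of closed Teichmüller geodesics established in \cite{Hamenstaedt13}). The only point deserving any care is checking that $U=\mathcal{T}(S)_\epsilon$ is admissible for Proposition \ref{prop:thickpartportion-typical}, which is trivial once $\epsilon$ is chosen small enough to enclose a single fixed pseudo-Anosov axis.
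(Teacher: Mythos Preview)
Your proof is correct and follows essentially the same route as the paper: choose $\epsilon$ so that $\mathcal{T}(S)_\epsilon$ contains a pseudo-Anosov axis, take $p=3c_2(g,\epsilon)$, apply Proposition~\ref{prop:thickpartportion-typical} to this $U$ and $p$, and feed the resulting proportion into Proposition~\ref{thickpart}; the lower bound is Schoen's inequality~\eqref{lower}. The only difference is that you spell out the lower bound and the choice of $\kappa(g)$ explicitly, which the paper leaves implicit.
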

\begin{proof}
Choose $\epsilon >0$ sufficiently small that the open 
${\rm Mod}(S)$-invariant subset
${\mathcal T}(S)_\epsilon\subset {\mathcal T}(S)$ of all surfaces whose
systole is bigger $\epsilon$ contains the axis of a pseudo-Anosov element.
Let $p=3c_2(g,\epsilon)>0$ as in Proposition \ref{control}. 
Proposition \ref{prop:thickpartportion-typical} shows 
there exists a number $\zeta>0$ such that for a typical periodic 
Teichm\"uller geodesic 
$\gamma$, the proportion of time $t$ so that the
segment $\gamma(t-p,t+p)$ is entirely contained in ${\mathcal M}(S)_\epsilon$
is at least $\zeta$. The corollary now follows from Proposition \ref{thickpart}. 
\end{proof}

%%%%%%%%%%%%%%%%%%%%%%%%%%%%%%%%%%%%%%%%%%%%%%%%%%%%%%%%%%%%%%%%%%%%%%
%%%%%%%%%%%%%%%%%%%%%%%%%%%%%%%%%%%%%%%%%%%%%%%%%%%%%%%%%%%%%%%%%%%%%%%%%%%

\section{Random mapping tori}\label{random}

The main goal of this section is to prove Theorem \ref{thm3} from the
introduction. The part of Theorem \ref{thm2} concerning 
radom mapping tori follows from this and Proposition \ref{thickpart}.

We begin with reviewing some background on random walks on groups. This is a vast
subject,
see for example   \cite{Kaimanovich-Masur} \cite{Horbez-Dahmani}
\cite{Maher-Tiozzo} for more details.

Let $G$ be a countable finitely generated group.
Let $\mu$ be a symmetric probability measure on $G$ and
let $\mu^{\mathbb{Z}}$ be the product measure on $G^{\mathbb{Z}}$.

Let $T:G^{\mathbb{Z}}\to G^{\mathbb{Z}}$ be the following invertible
transformation:
$T$ takes the two-sided sequence $(h_{i})_{i\in \mathbb{Z}}$
to the sequence $(g_{i})_{i\in \mathbb{Z}}$ with
$g_{0}=e$ and $g_{n}=g_{n-1}h_n$ for $n\neq 0$.
Explicitly, this means
$$g_{n}=h_{1}\cdots h_{n}\quad \text{ for }n>0$$ and
$$g_{n}=h^{-1}_{0}h^{-1}_{-1}\cdots h^{-1}_{-n+1}\quad \text{ for }n<0.$$

Similarly, let $\mu^{\mathbb{N}}$ be the product measure on $G^{\mathbb{N}}$.
Let $T_{+}:G^{\mathbb{N}}\to G^{\mathbb{N}}$ be the transformation that
takes the one-sided infinite sequence $(h_{i})_{i\in \mathbb{N}}$
to the sequence $(g_{i})_{i\in \mathbb{N}}$ with
$g_{0}=e$ and $g_{n}=g_{n-1}h_n$ for $n\neq 0$.
Explicitly, for $n>0$ this means
$$g_{n}=h_{1}\cdots  h_{n}.$$

Let $\overline{P}$ be the pushforward measure
$T^{*}\mu^{\mathbb{Z}}$ and $P$ the pushforward measure
$T^{*}_{+}\mu^{\mathbb{N}}$.

The measure $P$ describes the distribution $\mu$ on
sample paths, i.e. of products of independent $\mu$-distributed increments.
        The measure space $(G^{\mathbb{Z}},\overline{P})$ is
naturally isomorphic to $(G^{\mathbb{N}},P)\otimes (G^{\mathbb{N}},P)$
via the map sending the bilateral path $\omega$ to the pair of unilateral
paths
$((\omega_{n})_{n\in \mathbb{N}},(\omega_{-n})_{n\in \mathbb{N}})$.

Assume now that $G$ acts by isometries on a metric space $(X, d_X)$
and let $x_0 \in X$.
If $\mu$ has finite first moment (which is obviously the case
if the support of $\mu$ is finite, which is the case of interest for us),
Kingman's subadditive ergodic theorem
implies that for $P$ a.e. sample path $\omega$
the limit
$$L_X=\lim_{n\to \infty}\frac{d_X(\omega_{n}x_{0},x_{0})}{n}$$ exists.
This number $L$ is called the
\emph{drift} of the random walk
induced by $\mu$ with respect to the metric $d_X$.

In the case that $(X,d_X)$ is a separable Gromov hyperbolic metric space
then
the action of $G$ on $X$ is called \emph{nonelementary} if $G$ contains a
pair of loxodromic
isometries with disjoint sets of fixed points in
the \emph{Gromov boundary} $\partial X$ of $X$. A symmetric probability
measure on $G$ is called \emph{nonelementary}
if the subgroup of G generated by its
support is a nonelementary subgroup of $G$.
The following results are due in this generality to Maher and Tiozzo
\cite{Maher-Tiozzo}.

\begin{theorem}\label{Maher-Tiozzo}
 Let $G$ be a countable group that acts by isometries on a
separable Gromov hyperbolic space $(X,d_X)$ such that any two points in $X\cup \partial X$ can be connected by a geodesic. 
Let $\mu$ be a nonelementary probability measure
on $G$. 
\begin{enumerate}
\item For any $x\in X$ and $P$ a.e.
sample path $\omega=(\omega_{n})_{n\in \mathbb{N}}$ of the random walk
on $(G, \mu)$, the sequence $(\omega_{n}x_0)_{n\in \mathbb{N}}$ converges
to a point ${\rm bnd}(\omega)\in \partial X$.
\item 
If $\mu$ has finite first moment with respect to the metric
$d_X$, then there exists $L_{X}>0$ such that
for $P$-a.e. sample path $\omega$ one has
$$\lim_{n\to \infty}\frac{d_{X}(x_{0},\omega_{n}x_{0})}{n}=L_{X}.$$
Moreover,
there is a unit speed geodesic ray $\tau$ converging to ${\rm bnd}(\omega)$
such that
$$\lim_{n\to \infty}\frac{d_{X}(\tau(L_Xn),\omega_{n}x_{0})}{n}=0.$$
\item 
If $\mu$ has finite support, then for $P$-a.e. sample path
$\omega$ there is an $n_{0}$ such that $\omega_{n}$ acts loxodromically for
all $n\geq n_{0}$.
\end{enumerate}
\end{theorem}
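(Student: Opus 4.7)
The plan is to follow the standard approach for random walks on (not necessarily proper) Gromov hyperbolic spaces, adapted from Kaimanovich's framework and Maher--Tiozzo's generalization. Fix a basepoint $x_0\in X$ and let $\omega=(\omega_n)$ be a sample path. The three parts of the theorem will be treated in order, since each subsequent part builds on the previous.

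For part (1), I would first construct a $\mu$-stationary probability measure $\nu$ on a suitable compactification. Since $(X,d_X)$ need not be proper, the Gromov boundary $\partial X$ is not automatically compact, so the standard trick is to pass to the horofunction compactification $\overline{X}^h$, which is always compact and metrizable, and to use the natural map to $X\cup \partial X$. One then pushes forward the step distribution to get a $\mu$-stationary measure on $\overline{X}^h$ by Kakutani--Markov, and shows that this measure is supported on the locus corresponding to $\partial X$ by using nonelementarity (a purely atomic stationary measure would force $\mu$ to be supported on a two-point or elementary set). Almost sure convergence of $\omega_n x_0$ to $\partial X$ is then obtained via the martingale convergence theorem applied to the hitting measures of subpaths, combined with the standard estimate that Gromov products $(\omega_n x_0 \mid \omega_m x_0)_{x_0}\to\infty$ along a.e.\ trajectory; this last step uses that the measure $\nu$ is non-atomic on $\partial X$.

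For part (2), positive drift follows by combining Kingman's subadditive ergodic theorem, which gives the existence of $L_X\geq 0$, with a lower bound $L_X>0$ that uses nonelementarity. The typical argument pairs the boundary measure $\nu$ with the backward random walk boundary measure $\check\nu$ (obtained by reversing the sample path) and shows that the pair $(\mathrm{bnd}(\omega^-),\mathrm{bnd}(\omega^+))$ is generically a pair of distinct points in $\partial X$, so that a nonzero proportion of the displacement contributes linearly by a contraction/thin triangles argument. For the sublinear tracking statement, I would pick any geodesic ray $\tau$ from $x_0$ to $\mathrm{bnd}(\omega)$ (existence is hypothesized) and verify that $\tau(L_X n)$ and $\omega_n x_0$ have their pairwise Gromov products $\big(\tau(L_X n)\mid \omega_n x_0\big)_{x_0}$ growing linearly, hence by thinness of triangles they are close to a common point on either geodesic; comparing via $d_X(\omega_n x_0,x_0)=L_X n+o(n)$ yields $d_X(\tau(L_X n),\omega_n x_0)=o(n)$.

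For part (3), given the finite support assumption, the shift-invariance of $\overline{P}$ and ergodicity of the shift on $(G^{\mathbb{Z}},\overline{P})$ imply that the forward and backward limits $\mathrm{bnd}(\omega^\pm)$ are almost surely distinct points of $\partial X$. The key observation is then that for any $g\in G$ with $d_X(x_0,g^n x_0)/n\to \ell>0$ whose orbit $(g^n x_0)$ converges to a single boundary point, $g$ must act loxodromically. Applied to $\omega_n$ for large $n$: sublinear tracking plus distinct forward/backward limits shows that the orbit of $\omega_n$ on $x_0$ has an attracting and a repelling fixed point on $\partial X$ and positive translation length, so $\omega_n$ is loxodromic. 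The hard step is part (2), because proving $L_X>0$ requires a genuinely nontrivial hyperbolic geometric input (that the stationary measure on $\partial X$ does not concentrate on few points) rather than a purely soft ergodic argument; everything else is either a standard martingale/ergodic argument or a routine consequence of hyperbolic thinness of triangles.
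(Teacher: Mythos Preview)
The paper does not prove this theorem at all: it is quoted verbatim as a result of Maher and Tiozzo \cite{Maher-Tiozzo} and used as a black box. So there is no ``paper's own proof'' to compare your proposal against.

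That said, your outline is a reasonable high-level summary of the Maher--Tiozzo argument. A couple of points are somewhat loose. In part (3), the criterion you invoke (``$d_X(x_0,g^nx_0)/n\to\ell>0$ and $g^nx_0$ converges to a boundary point implies $g$ is loxodromic'') is not quite what is used; Maher--Tiozzo instead show directly that the translation length $\inf_x d_X(x,\omega_n x)$ grows linearly in $n$, which requires a separate argument beyond sublinear tracking (roughly, one controls the Gromov product $(x_0\mid \omega_n^2 x_0)_{\omega_n x_0}$ and shows it is sublinear, so that the triangle $x_0,\omega_n x_0,\omega_n^2 x_0$ is ``straight''). Also, in part (2) your argument for $L_X>0$ is vague at the crucial step; Maher--Tiozzo's actual proof of positive drift in the non-proper setting is more delicate and goes through showing that Gromov products $(\omega_n x_0\mid x_0)_{\omega_m x_0}$ are uniformly bounded for $m$ in a definite proportion of $[0,n]$, using properties of the stationary measure on the horofunction boundary rather than a direct bilateral pairing argument.
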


Using techniques of \cite{Maher-Tiozzo}, Dahmani and Horbez
(Proposition 1.9 of \cite{Horbez-Dahmani})
proved. 
\begin{proposition}
\label{axesclosetobasepoint}
Let $X$ be a separable geodesic Gromov hyperbolic metric space, with
hyperbolicity constant $\delta$. For all $A>0$, there exists
a number $\kappa=\kappa(A, \delta)>0$ such that the following
holds. Let $G$ be a group acting by isometries on $X$, and let $\mu$ be a
nonelementary
probability measure on $G$ with finite support. Let $L_{X}>0$ denote the
drift of the random
walk on $(G, \mu)$ with respect to $d_X$. Then for
$P$-a.e. sample path $\omega$ of the
random walk on $(G, \mu)$, for all $\epsilon > 0$,
and all $A$-quasi-geodesic rays $\gamma: R^{+}\to X$ converging
to ${\rm bnd}(\omega)\in \partial X$,
there exists $n_0\in \mathbb{N}$ such that for all $n\geq n_0$, any
$A$-quasi-axis of $\omega_n$ intersects the $\kappa$-neighborhood of 
$\gamma[t_{1}(n),t_{2}(n)]$.
Here $t_{1}(n)$ (resp. $t_{2}(n)$) is the infimum of all real numbers
such that $d_{X}(\gamma(0), \gamma(t_{1}(n)))\geq  \epsilon L_{X}n$ (resp.
$d_{X}(\gamma(0), \gamma(t_{2}(n))) \geq (1 - \epsilon)L_{X}n$).
\end{proposition}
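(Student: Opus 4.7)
The plan is to combine Theorem \ref{Maher-Tiozzo} with the bilateral structure of the (symmetric) random walk and standard $\delta$-hyperbolic bookkeeping. For $\overline{P}$-a.e. bilateral sample path $\omega=(\omega_n)_{n\in\mathbb{Z}}$, both the forward walk $(\omega_n x_0)_{n>0}$ and the backward walk $(\omega_{-n} x_0)_{n>0}$ converge to distinct boundary points $\omega^+=\mathrm{bnd}(\omega)$ and $\omega^-$, and by Theorem \ref{Maher-Tiozzo}(3) the element $\omega_n$ acts loxodromically on $X$ for all sufficiently large $n$. The first technical step is to show $\ell(\omega_n)/n\to L_X$ almost surely, where $\ell$ denotes stable translation length. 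This follows by applying Kingman's subadditive ergodic theorem to the subadditive cocycle $n\mapsto \ell(\omega_n)$ and comparing the resulting limit with $d_X(x_0,\omega_n x_0)/n\to L_X$ via the $\delta$-hyperbolic identity
\[ d_X(x_0,\omega_n x_0) = \ell(\omega_n) + 2\,d_X(x_0,\mathrm{Ax}(\omega_n)) + O(\delta) \]
valid for loxodromic isometries. As a consequence, the nearest-point projection $p_n$ of $x_0$ onto an $A$-quasi-axis $\alpha_n$ of $\omega_n$ satisfies $d_X(x_0,p_n)=o(n)$.

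Next, I would show that the attracting (resp.\ repelling) fixed points $(\omega_n)^{\pm}\in\partial X$ converge to $\omega^{\pm}$ in the Gromov bordification $X\cup\partial X$. The heuristic is that $\omega_n^k x_0\to(\omega_n)^+$ as $k\to\infty$ while $\omega_{kn} x_0\to\omega^+$ as $k\to\infty$, and a Gromov-product comparison pins down the convergence as $n\to\infty$ using the exponential decay of Gromov products along typical trajectories. By the Morse lemma in a $\delta$-hyperbolic space, there is a constant $\kappa_0=\kappa_0(A,\delta)$ such that $\alpha_n$ stays within distance $\kappa_0$ of any bi-infinite geodesic from $(\omega_n)^-$ to $(\omega_n)^+$, and similarly $\gamma$ stays within distance $\kappa_0$ of a geodesic ray from $\gamma(0)$ to $\omega^+$. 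Since both such geodesics share the endpoint $\omega^+$, thinness of ideal triangles provides a uniform constant $\kappa=\kappa(A,\delta)$ so that, past a bounded initial segment, $\alpha_n$ and $\gamma$ fellow-travel at distance at most $\kappa$.

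Combining these facts with Theorem \ref{Maher-Tiozzo}(2), which places $\omega_n x_0$ within $o(n)$ of $\gamma(L_X n)$, the sub-arc of $\alpha_n$ from $p_n$ to $\omega_n p_n$ has length within $o(n)$ of $L_X n$ and lies in the $\kappa$-neighborhood of the initial segment $\gamma[0, L_X n]$. Since $t_1(n)\approx \epsilon L_X n$ and $t_2(n)\approx (1-\epsilon) L_X n$ fall strictly inside this range once $n$ is large enough, $\alpha_n$ must meet the $\kappa$-neighborhood of $\gamma[t_1(n),t_2(n)]$, as desired. The main obstacle is the second step: making the fixed-point convergence $(\omega_n)^{\pm}\to\omega^{\pm}$ quantitative enough that $\kappa$ absorbs all error terms independently of $\omega$. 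This requires the almost sure positivity of the drift together with exponential decay of Gromov products along typical trajectories, both of which are available in the Maher--Tiozzo framework under the finite-support hypothesis; once this is in place, the remainder of the argument is routine $\delta$-hyperbolic bookkeeping.
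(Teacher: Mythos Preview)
The paper does not prove this proposition; it quotes it verbatim as Proposition~1.9 of Dahmani--Horbez \cite{Horbez-Dahmani}, so there is no ``paper's own proof'' to compare against. Your outline is in the right spirit, but it contains a genuine gap at the very first step.

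You propose to obtain $\ell(\omega_n)/n\to L_X$ by applying Kingman's subadditive ergodic theorem to the sequence $n\mapsto \ell(\omega_n)$. This sequence is \emph{not} subadditive: stable translation length does not satisfy $\ell(gh)\le \ell(g)+\ell(h)$. A clean counterexample in the free group $F(a,b)$ acting on its Cayley tree is $g=a$, $h=bab^{-1}$; then $\ell(g)=\ell(h)=1$ while $gh=abab^{-1}$ is cyclically reduced of length $4$, so $\ell(gh)=4>2$. Hence Kingman does not apply, and your argument does not produce a limit for $\ell(\omega_n)/n$. Moreover, even granting some limit $L'$, your use of the identity $d_X(x_0,\omega_n x_0)=\ell(\omega_n)+2\,d_X(x_0,\mathrm{Ax}(\omega_n))+O(\delta)$ to identify $L'=L_X$ presupposes $d_X(x_0,\mathrm{Ax}(\omega_n))=o(n)$, which is exactly the conclusion you are after, so the reasoning is circular at this point.

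The way Dahmani--Horbez actually establish $\ell(\omega_n)/n\to L_X$ (their ``spectral theorem'') is not via Kingman on $\ell$, but via the Maher--Tiozzo exponential decay of shadows: they control the Gromov products $(x_0\mid \omega_n x_0)_{\omega_n^{-1}x_0}$ and $(x_0\mid \omega_n^{-1}x_0)_{\omega_n x_0}$ and feed these into a $\delta$-hyperbolic formula expressing $\ell(\omega_n)$ in terms of $d_X(x_0,\omega_n x_0)$ and such Gromov products. Once that is in place, your remaining steps (Morse lemma, thin-triangle fellow-travelling, matching the sublinear tracking of $\gamma$ by $\omega_n x_0$) are essentially the correct bookkeeping, though you should be careful that the proposition is stated for \emph{unilateral} sample paths: there is no $\omega^-$ available, and what one actually controls is that the repelling fixed point $(\omega_n)^-$ has bounded Gromov product with $x_0$ based at $\omega^+$, rather than that it converges to some boundary point.
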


We will apply these results to $G={\rm Mod}(S)$, the mapping class group of
the surface $S$. It acts on the associated complex of curves
$X=\mathcal{C}(S)$, which is Gromov
hyperbolic by work of Masur and Minsky\cite{MM99}, and has the property that any two points in $X\cup \partial X$ can be connected by a geodesic by work of Bowditch \cite{Bo}. 
An element of ${\rm Mod}(S)$ acts as a loxodromic
isometry on $\mathcal{C}(S)$ if and only if it is a pseudo-Anosov.
We will call a subgroup of ${\rm Mod}(S)$ or a measure on ${\rm Mod}(S)$
nonelementary if it is nonelementary for the action on $\mathcal{C}(S)$.

Our main technical result (Proposition \ref{axisrayteich}) is an
analog of Proposition \ref{axesclosetobasepoint} for the action of 
${\rm Mod}(S)$ on the Teichm\"uller space
$\T(S)$ with the Teichm\"uller metric $d_{T}$ and
its \emph{Thurston boundary}
$\mathcal {PML}$ of all measured projective laminations.
We begin with collecting those known results for this action 
we shall use for our goal.

The set $\mathcal{PML}$ contains the invariant subset of
\emph{uniquely ergodic laminations}. 
In the following results of Kaimanovich and Masur
\cite{Kaimanovich-Masur}, 
The notion of a non-elementary probability measure
is the notion discussed above.
\begin{theorem}

\label{Kaimanovich-Masur}
Let $\mu$ be a nonelementary probability measure on
${\rm Mod}(S)$. For $P$-almost every $\omega$,  and every $o\in \T(S)$,
$\omega_{n}o$ converges to a uniquely ergodic point
${\rm bnd}_{T}(\omega)\in \mathcal{PML}$.
\end{theorem}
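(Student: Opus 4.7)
The plan is to follow Furstenberg's boundary theory, adapted to the action of $\Mod(S)$ on Thurston's compactification $\mathcal{PML}$ of $\T(S)$. First I would produce a $\mu$-stationary Borel probability measure $\nu$ on $\mathcal{PML}$: since $\mathcal{PML}$ is compact metrizable and the convolution $\nu \mapsto \mu \ast \nu = \int g_\ast \nu\,d\mu(g)$ acts continuously on the weak-$\ast$ compact convex space of probability measures, the Kakutani--Markov fixed point theorem produces such a $\nu$.

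Next I would establish weak convergence of the translates $\omega_n \nu$ to a random limit $\nu_\omega$. For each continuous $f$ on $\mathcal{PML}$, the sequence $n \mapsto \int f\,d(\omega_n\nu)$ is a bounded martingale with respect to the natural filtration on $(\Mod(S)^{\mathbb{N}},P)$, and hence converges $P$-a.s. A cocycle argument combined with ergodicity of the shift upgrades this to weak convergence $\omega_n\nu \to \nu_\omega$ for $P$-a.e.\ $\omega$. Because two independent copies of the random walk must give rise to the same limit measure, and because $\nu$ itself can be recovered from $\nu_\omega$ by averaging, the limit $\nu_\omega$ must be a Dirac mass $\delta_{\xi(\omega)}$ at some measurable random point $\xi(\omega) \in \mathcal{PML}$.

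The main obstacle is to prove that $\xi(\omega)$ is uniquely ergodic almost surely, which reduces to showing that $\nu$ gives zero mass to the set $\mathcal{PML} \setminus \mathcal{UE}$ of non-uniquely-ergodic projective laminations. Here I would combine two inputs. First, nonelementariness ensures that $\mu^{\ast n}$ eventually contains independent pseudo-Anosov classes whose four Thurston fixed points fill the surface; a standard north--south dynamics contraction argument applied to these elements prevents $\nu$ from being atomic or concentrated on any proper $\Mod(S)$-invariant closed subset of $\mathcal{PML}$. Second, Masur's criterion characterises uniquely ergodic foliations as those whose vertical Teichm\"uller ray is recurrent in moduli space; combining this with the recurrence of projected sample paths (analogous to, and refining, Theorem \ref{Maher-Tiozzo}(2)) rules out mass on the non-uniquely-ergodic locus.

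Finally I would translate the weak convergence $\omega_n\nu \to \delta_{\xi(\omega)}$ into the pointwise convergence $\omega_n o \to \xi(\omega)$ in the Thurston compactification $\T(S) \cup \mathcal{PML}$. The key tool is Masur's theorem that any Teichm\"uller geodesic ray with uniquely ergodic vertical foliation converges in $\T(S) \cup \mathcal{PML}$ to the projective class of that foliation. A compactness argument then forces every limit point of $\omega_n o$ in $\T(S) \cup \mathcal{PML}$ to lie in the support of $\nu_\omega = \delta_{\xi(\omega)}$, so $\omega_n o \to \xi(\omega)$ as required. The most delicate step is the unique-ergodicity assertion in the third paragraph, since the $\Mod(S)$-action on $\mathcal{PML}$ is not by isometries of a hyperbolic space and so direct hyperbolic-boundary arguments do not apply; one genuinely needs Masur's criterion together with the recurrence of the random walk's shadow in moduli space.
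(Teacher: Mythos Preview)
The paper does not prove this statement; it is quoted verbatim as a result of Kaimanovich and Masur \cite{Kaimanovich-Masur} and used as a black box. So there is no ``paper's proof'' to compare against, only the original 1996 argument.

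Your sketch has the right architecture (Furstenberg's stationary-measure machinery on $\mathcal{PML}$), but two of the key steps are not right as written.

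First, the order is inverted. In the Kaimanovich--Masur argument one \emph{first} shows that every $\mu$-stationary measure $\nu$ on $\mathcal{PML}$ is purely nonatomic and concentrated on the set of minimal, filling, uniquely ergodic laminations, and \emph{then} uses this structural fact to deduce that the martingale limits $\nu_\omega$ are Dirac. The reason the Dirac conclusion needs unique ergodicity is that the contraction argument relies on the fact that for two uniquely ergodic filling laminations $\lambda_1,\lambda_2$ with $i(\lambda_1,\lambda_2)>0$, any sequence $g_n$ for which $g_n\lambda_1$ converges forces $g_n\lambda_2$ to the same limit. Your sentence ``two independent copies of the random walk must give rise to the same limit measure'' is not a valid justification for $\nu_\omega$ being a point mass; independent walks almost surely have \emph{different} boundary limits.

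Second, your proposed mechanism for ruling out mass on non-uniquely-ergodic laminations does not work. Masur's criterion concerns recurrence of a \emph{Teichm\"uller geodesic ray}, not of a random walk trajectory, and there is no a priori reason the sample path $\omega_n o$ tracks any single Teichm\"uller ray before you already know the boundary point is uniquely ergodic (indeed, that tracking is Tiozzo's Theorem~\ref{Tiozzo-sublinear}, which is downstream of the result you are trying to prove). The actual Kaimanovich--Masur argument is measure-theoretic: nonelementariness and north--south dynamics of pseudo-Anosovs force $\nu$ to be nonatomic; then one shows the complement of minimal filling laminations is a countable union of closed $\Mod(S)$-invariant sets each carrying no nonatomic stationary mass; finally the non-uniquely-ergodic minimal filling laminations are handled by a separate structural argument about simplices of transverse measures. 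No appeal to recurrence of sample paths in moduli space is made.
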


In other words, there is a $P$-almost everywhere defined
measurable map ${\rm bnd}:G^{\mathbb{N}}\to {\mathcal P\mathcal M\mathcal L}$ 
sending $\omega$ to $\lim_{n\to \infty}\omega_{n}o\in \mathcal{PML}$.
The measure on
$\mathcal{PML}$ defined by
\[\nu={\rm bnd}_{*}P=\lim_{n\to \infty}\mu^{*n}\]
is called the \emph{harmonic measure} for $\mu$. In fact, 
$(\mathcal{PML},\nu)$
is a model for the \emph{Poisson boundary} of $(G,\mu)$
\cite{Kaimanovich-Masur}.
In particular,
$\nu$ is the unique $\mu$ stationary measure on $\mathcal{PML}$:
for every $g\in G$ and $\nu$ measurable $V\subset \mathcal{PML}$ we have
$$\nu(V)=\sum_{g\in G}\mu(g)\nu(g^{-1}V)$$ and consequently for every $n>0$:
$$\nu(V)=\sum_{g\in G}\mu^{*n}(g)\nu(g^{-1}V).$$

The stationarity and the fact that the support of $\mu$ generates $G$
implies that if $\nu(V)>0$ then
$\nu(gV)>0$ for every $g\in G$.
For every $g\in G$ we have:
$$P(\omega:\lim_{n\to \infty} g\omega_{n}\in V)=P(\omega:\lim_{n\to \infty}
\omega_{n}\in g^{-1}V)=\nu(g^{-1}V)$$
If $C_{e,h_{1},...h_{k}}$ is the cylinder subset of $G^\N$ consisting of
$\omega$ with
$\omega_{i}=h_i$ for $i\leq k$ we have
\begin{align}P(C_{e,h_{1},...h_{k}}\cap
{\rm bnd}^{-1}V) &\notag\\
=P(C_{e,h_{1},...h_{k}})P(\omega:\lim_{n\to \infty} h_{k}\omega_{n}\in
V)&=P(C_{e,h_{1},...h_{k}})\nu(h^{-1}_{k}V)
\notag\end{align}
(this is proved in general for the Poisson boundary by Kaimanovich in
Section 3.2 of \cite{Kaimanovich}).
In particular, if $\nu(V)>0$ then for any cylinder subset $C\subset G^\N$
we have $P(C\cap {\rm bnd}^{-1}V)>0$.

The following claim follows from minimality of the action of $G$ on
$\mathcal {PML}$.
\begin{lemma}\label{fullsupport}
The measure $\nu$ has full support on $\mathcal{PML}$.
\end{lemma}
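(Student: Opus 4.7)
The plan is to show that $F := \operatorname{supp}(\nu)$ is a non-empty closed subset of $\mathcal{PML}$ which is invariant under the subgroup $\Gamma$ generated by $\operatorname{supp}(\mu)$, and then to invoke the fact that the $\operatorname{Mod}(S)$-action on $\mathcal{PML}$ is minimal (a classical theorem of Masur and Thurston: the $\operatorname{Mod}(S)$-orbit of any projective measured lamination is dense in $\mathcal{PML}$).

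First, since $\nu$ is a probability measure, $F$ is closed and non-empty. Next, I would extract invariance of $F$ from stationarity. Writing $\nu = \sum_{g\in G}\mu(g)\, g_*\nu$, every summand is a nonnegative measure bounded above by $\nu$. Hence for each $g \in \operatorname{supp}(\mu)$ we have $\mu(g)\,g_*\nu \leq \nu$, so $g_*\nu$ is absolutely continuous with respect to $\nu$. Taking supports gives $g\cdot F = \operatorname{supp}(g_*\nu) \subseteq F$. By induction on word length, $hF \subseteq F$ for every $h$ in the semigroup $\Gamma_+$ generated by $\operatorname{supp}(\mu)$.

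To upgrade semigroup invariance to group invariance, I would use that in the setup of this section $\mu$ is symmetric, so $\Gamma_+ = \Gamma$. (If symmetry is not assumed, one can instead run the same argument for the reflected measure $\check\mu$ and the corresponding harmonic measure, or observe that $F$ must contain both the attracting and the repelling fixed points of every pseudo-Anosov in $\Gamma$, since semigroup invariance puts attracting fixed points of loxodromic semigroup elements into $F$.) The key outcome is that $F$ is a non-empty closed $\Gamma$-invariant subset of $\mathcal{PML}$.

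Finally, $\Gamma$ is non-elementary (as $\mu$ is non-elementary) and by minimality the $\operatorname{Mod}(S)$-orbit, hence a fortiori the $\Gamma$-orbit of any point in $\mathcal{PML}$ is dense. Therefore the only non-empty closed $\Gamma$-invariant subset of $\mathcal{PML}$ is $\mathcal{PML}$ itself, and $F = \mathcal{PML}$, which is the desired conclusion. The most delicate point is passing from $\Gamma_+$-invariance to $\Gamma$-invariance, which is why symmetry of $\mu$ (or an auxiliary argument using independent pseudo-Anosov pairs in $\Gamma$) is used; once that is in hand, minimality of the $\operatorname{Mod}(S)$-action does the rest.
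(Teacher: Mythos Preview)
Your argument is the contrapositive of the paper's and uses the same two ingredients (stationarity and minimality of the $\operatorname{Mod}(S)$-action on $\mathcal{PML}$). The paper proceeds directly: given an open $V\subset\mathcal{PML}$, minimality gives $\bigcup_{g\in G} gV=\mathcal{PML}$, so $\nu(gV)>0$ for some $g$, and then stationarity (together with the standing assumption that $\operatorname{supp}(\mu)$ generates $G$) forces $\nu(V)>0$. You instead show the support $F$ is closed and $\Gamma$-invariant and then appeal to minimality. Both are fine.

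There is one genuine slip in your final paragraph: the phrase ``hence a fortiori the $\Gamma$-orbit of any point is dense'' has the implication reversed. Density of $\operatorname{Mod}(S)$-orbits does \emph{not} imply density of orbits of a subgroup $\Gamma\subseteq\operatorname{Mod}(S)$. What rescues the argument here is that, in the paper's setup (see the sentence immediately preceding the lemma), the support of $\mu$ generates all of $G=\operatorname{Mod}(S)$, so in fact $\Gamma=\operatorname{Mod}(S)$ and the issue disappears. If one only assumed $\Gamma$ nonelementary, the correct conclusion would be that $\nu$ is supported on the limit set of $\Gamma$ in $\mathcal{PML}$, which can be a proper closed subset; so your parenthetical about ``nonelementary'' alone does not suffice, and you should instead invoke $\Gamma=\operatorname{Mod}(S)$ explicitly.
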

\begin{proof}
Let $V\subset \mathcal{PML}$ be an open set. By minimality,
$\bigcup_{g\in G}gV=\mathcal{PML}$, and hence
$\nu(\bigcup_{g\in G}gV)=1>0$ whence $\nu(gV)>0$ for some $g\in G$. By
stationarity, $\nu(V)>0$.
\end{proof}

The analog of Theorem \ref{Maher-Tiozzo}
for the action of the mapping class group on ${\mathcal T}(S)$ is due to
Tiozzo \cite{Tiozzo}.

\begin{theorem}
\label{Tiozzo-sublinear}
Let $\mu$ be a nonelementary
probability measure on ${\rm Mod}(S)$ with
finite first moment for $d_T$.
Then there exists $L_T>0$ such that for $P$-a.e. sample path $\omega$ one
has
$$\lim_{n\to \infty}\frac{d_{T}(o,\omega_{n}o)}{n}=L_T,$$ and
for any geodesic ray $\tau$ converging to ${\rm bnd}_{T}(\omega)$ we have
$$\lim_{n\to \infty}\frac{d_{T}(\tau(L_Tn),\omega_{n}o)}{n}=0.$$
\end{theorem}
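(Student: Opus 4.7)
}
The existence of the drift $L_T \ge 0$ is the routine half. Set $c_n(\omega) = d_T(o,\omega_n o)$. Since ${\rm Mod}(S)$ acts by isometries on $(\T(S),d_T)$ the cocycle identity $c_{n+m}(\omega) \le c_n(\omega) + c_m(T^n\omega)$ follows from the triangle inequality, so $(c_n)$ is subadditive; the finite first-moment hypothesis makes $c_1 \in L^1(P)$. Kingman's subadditive ergodic theorem then produces $L_T = \lim_n c_n/n$ almost surely. To see $L_T > 0$, I would project to the curve graph: the coarse systole map $\pi:\T(S)\to \mathcal{C}(S)$ is coarsely Lipschitz, so $d_{\mathcal{C}(S)}(\pi o,\pi(\omega_n o))\le K\,d_T(o,\omega_n o)+K$; Maher--Tiozzo (applied as in Theorem \ref{Maher-Tiozzo}) gives positive drift on $\mathcal{C}(S)$ for any nonelementary $\mu$, which forces $L_T>0$.

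The sublinear tracking assertion is the real content. My strategy is to pass through the horofunction compactification. By Walsh, the horofunction boundary of $(\T(S),d_T)$ coincides with the Gardiner--Masur boundary, and by results of Miyachi the boundary points sitting over uniquely ergodic projective laminations are genuine Busemann points: for such $\xi\in \mathcal{PML}$ the horofunction $h_\xi$ has the form $h_\xi(\tau(t))=-t$ along the unique Teichm\"uller geodesic ray $\tau:[0,\infty)\to \T(S)$ based at $o$ and converging to $\xi$. By Theorem \ref{Kaimanovich-Masur} the limit $\xi = {\rm bnd}_T(\omega)$ is $P$-a.s.\ uniquely ergodic, so the horofunction $h_\omega := h_{{\rm bnd}_T(\omega)}$ is well defined almost surely.

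Next, I would apply a Karlsson--Ledrappier-type ergodic argument to the cocycle $\omega \mapsto h_\omega(\omega_n o)$. Stationarity of the hitting measure $\nu$ on $\mathcal{PML}$, together with the subadditive ergodic theorem applied to the almost-cocycle $h_\omega(\omega_n o) - h_{T\omega}((T\omega)_{n-1} o)$, yields
\[
\lim_{n\to\infty}\frac{1}{n}\,h_\omega(\omega_n o) \;=\; -L_T \qquad P\text{-a.s.}
\]
Since $h_\omega$ is $1$-Lipschitz and $h_\omega(\tau(L_T n)) = -L_T n$, this already controls $h_\omega(\tau(L_T n)) - h_\omega(\omega_n o)$ in $o(n)$, but only gives a \emph{lower} bound on $d_T(\tau(L_T n),\omega_n o)$.

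The main obstacle is upgrading horofunction alignment to genuine $d_T$-tracking, which in Gromov hyperbolic settings is automatic but here requires geometric input from Teichm\"uller theory. I would exploit the unique-ergodicity of the limit: by Masur's criterion combined with Miyachi's formula for Gardiner--Masur horofunctions, the horoball-level sets $\{h_\omega \le -s\}$ become asymptotically thin around $\tau$ at a uniquely ergodic point, in the sense that any sequence $x_n$ with $h_\omega(x_n) = -L_T n + o(n)$ and $d_T(o,x_n) = L_T n + o(n)$ must satisfy $d_T(x_n, \tau(L_T n)) = o(n)$. Applying this with $x_n = \omega_n o$ (noting that $d_T(o,\omega_n o)/n \to L_T$ from the first half of the theorem) yields the claimed $o(n)$ tracking. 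Independence of $\tau$ is automatic since the Teichm\"uller ray to a uniquely ergodic lamination is unique.
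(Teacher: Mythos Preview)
The paper does not prove Theorem \ref{Tiozzo-sublinear}: it is quoted verbatim as a result of Tiozzo \cite{Tiozzo} and used as a black box. So there is no ``paper's own proof'' to compare against.

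For what it is worth, your sketch is close in spirit to Tiozzo's actual argument: he also works via the horofunction compactification and a Karlsson--Ledrappier style multiplicative ergodic theorem, and the reduction to uniquely ergodic boundary points via Kaimanovich--Masur is the right move. Your final paragraph, where you promote horofunction alignment to genuine $d_T$-proximity, is the step where the real work lies, and your description (``horoball-level sets become asymptotically thin around $\tau$ at a uniquely ergodic point'') is more of a slogan than an argument; in Tiozzo's paper this is handled by a careful analysis of extremal length along Teichm\"uller rays rather than by a soft appeal to Miyachi. But since the paper under review takes the whole theorem on faith, none of this is at issue here.
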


For a pseudo-Anosov element $\phi \in {\rm Mod}(S)$ let $l(\phi)$ be its
translation length in $\T(S)$.
The following result is Theorem 3.1 of \cite{Horbez-Dahmani}.
\begin{theorem}\label{translationlength}
Let $\mu$ be a nonelementary probability measure on ${\rm Mod}(S)$ with
finite support.
Then
$l(\omega_{n})/n\to L_T$ $(n\to \infty)$
for $P$ almost every $\omega$.
\end{theorem}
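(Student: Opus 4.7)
The plan is to establish matching upper and lower bounds on $l(\omega_n)/n$, each holding $P$-almost surely. By Theorem~\ref{Maher-Tiozzo}(3), $\omega_n$ is eventually pseudo-Anosov along almost every sample path, so $l(\omega_n)$ is well-defined in the relevant limit. The upper bound is immediate from the variational characterization of translation length: for any $o\in\T(S)$ and any pseudo-Anosov $\phi$, one has $l(\phi)\le d_T(o,\phi o)$, so Theorem~\ref{Tiozzo-sublinear} gives $\limsup_{n\to\infty} l(\omega_n)/n\le L_T$ almost surely.

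For the lower bound, let $A_n\subset\T(S)$ be the Teichm\"uller axis of $\omega_n$ and let $x_n\in A_n$ be a point of $A_n$ closest to $o$. Since $\omega_n$ acts on $A_n$ as a translation by $l(\omega_n)$, the triangle inequality yields
\[
d_T(o,\omega_n o)\le d_T(o,x_n)+d_T(x_n,\omega_n x_n)+d_T(\omega_n x_n,\omega_n o)=2\,d_T(o,A_n)+l(\omega_n),
\]
so $l(\omega_n)\ge d_T(o,\omega_n o)-2\,d_T(o,A_n)$. Combined with $d_T(o,\omega_n o)/n\to L_T$ from Theorem~\ref{Tiozzo-sublinear}, it suffices to prove $d_T(o,A_n)/n\to 0$ almost surely.

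The key input for this last claim is Proposition~\ref{axisrayteich}, the Teichm\"uller-space analog of Proposition~\ref{axesclosetobasepoint}. It asserts that for $P$-a.e.\ $\omega$, every $\epsilon>0$, and any Teichm\"uller ray $\tau$ from $o$ converging to $\mathrm{bnd}_T(\omega)$, the axis $A_n$ eventually meets a $\kappa$-neighborhood of the segment $\tau[\epsilon L_T n,(1-\epsilon)L_T n]$. In particular $d_T(o,A_n)\le \epsilon L_T n+\kappa$ for large $n$, so $\limsup d_T(o,A_n)/n\le \epsilon L_T$ for every $\epsilon>0$, giving $d_T(o,A_n)/n\to 0$. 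The main obstacle is Proposition~\ref{axisrayteich} itself: since $\T(S)$ is not Gromov hyperbolic, Proposition~\ref{axesclosetobasepoint} does not transfer directly. Instead one uses Theorem~\ref{Kaimanovich-Masur} (and its version for the time-reversed walk) to almost surely identify $\mathrm{bnd}_T(\omega)$ and $\mathrm{bnd}_T(\check\omega)$ as uniquely ergodic projective laminations, shows that the attracting and repelling fixed points of $\omega_n$ on $\mathcal{PML}$ converge to these limits, and invokes contraction properties of the Teichm\"uller geodesic flow along recurrent pseudo-Anosov axes to force the bi-infinite axis $A_n$ to fellow-travel the ray $\tau$ on an arbitrarily long middle segment.
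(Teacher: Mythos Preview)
The paper does not actually prove this theorem; it simply cites it as Theorem~3.1 of Dahmani--Horbez~\cite{Horbez-Dahmani}. Your overall strategy---the easy upper bound $l(\omega_n)\le d_T(o,\omega_n o)$ together with a lower bound obtained from $d_T(o,A_n)=o(n)$---is sound and is in fact essentially how Dahmani and Horbez establish the result.

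There is, however, a logical problem with your appeal to Proposition~\ref{axisrayteich}. In this paper that proposition is stated and proved \emph{after} Theorem~\ref{translationlength}, and its very statement already includes the bound $Ln/2<l(\omega_n)<2Ln$, i.e.\ a weak form of what you are trying to prove. So invoking it here is a forward (and potentially circular) reference. You seem aware of this, since your last paragraph sketches an independent argument for Proposition~\ref{axisrayteich}, but that sketch is too vague to stand on its own: convergence of the attracting/repelling fixed points of $\omega_n$ in $\mathcal{PML}$ does not by itself imply that the axes $A_n$ fellow-travel $\tau$ in $\T(S)$, precisely because $\T(S)$ is not Gromov hyperbolic. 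The paper's actual proof of Proposition~\ref{axisrayteich} proceeds quite differently, by applying Proposition~\ref{axesclosetobasepoint} in the (hyperbolic) curve complex, pushing forward via the coarse projection $\pi:\T(S)\to\mathcal{C}(S)$, and then transferring closeness back to $\T(S)$ using Minsky's contraction for Teichm\"uller geodesics in the thick part together with Proposition~\ref{longthinrare}. If you want to make your argument self-contained, you would need to carry out something along these lines rather than the $\mathcal{PML}$-convergence heuristic you describe.
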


Reformulating the above discussion in terms of bilateral sample paths
instead we obtain:
\begin{theorem}
\label{bilatreform}
For $\mu$-almost every sample path $\omega \in G^{\mathbb{Z}}$ and every
$x\in \T(S)$ the limits
$${\rm bnd}_{\pm}(\omega)=\lim_{n\to \pm \infty}\omega_{n}x
\in \mathcal{PML}$$ exist,
are independent of $x$, distinct and uniquely ergodic.

There is a geodesic $\tau_{\omega}$ with vertical and horizontal foliations
${\rm bnd}^{\pm}(\omega)$,
and for any unit speed parametrization of $\tau_{\omega}$ we have
$$\lim_{n\to \pm \infty} d_{T}(x,\omega_{n}x)/|n|\to L_T$$ and
$$\lim_{n\to \pm \infty} d_{T}(\tau_{\omega}(L_Tn),\omega_{n}x)/|n|\to
0.$$
The measure $\nu={\rm bnd}_{*}P$ has full support on $\mathcal{PML}$.
Moreover, for any set
$V\subset \mathcal{PML}\times \mathcal{PML}$
with $(\nu \otimes \nu)(V)>0$ and each cylinder subset
$B$ of $G^{\mathbb{Z}}$ we have
$\overline{P}(({\rm bnd}_{-}\times {\rm bnd}_{+})^{-1}(V)\cap B)>0$.
\end{theorem}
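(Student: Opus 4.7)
The plan is to reduce every assertion to the corresponding unilateral statement (Theorem \ref{Kaimanovich-Masur}, Theorem \ref{Tiozzo-sublinear}, Lemma \ref{fullsupport}) via the measure-preserving isomorphism $(G^{\Z},\overline P)\cong(G^{\N},P)\otimes(G^{\N},P)$ noted before Theorem \ref{Maher-Tiozzo}, which sends a bilateral path $\omega$ to the pair $((\omega_n)_{n\geq 0},(\omega_{-n})_{n\geq 0})$. Symmetry of $\mu$ ensures that the reflected second factor is itself a $\mu$-distributed random walk, with the same harmonic measure $\nu$, so every unilateral result applies verbatim to either factor.

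Under this identification ${\rm bnd}_+(\omega)$ depends only on the forward factor and ${\rm bnd}_-(\omega)$ only on the backward factor. Applying Theorem \ref{Kaimanovich-Masur} to each factor produces both limits as uniquely ergodic points of $\mathcal{PML}$, independent of the basepoint $x$, and applying Theorem \ref{Tiozzo-sublinear} to each factor yields the drift $d_T(x,\omega_n x)/|n|\to L_T$ as $n\to\pm\infty$. Once distinctness of ${\rm bnd}_\pm(\omega)$ is in hand, Masur's theorem on uniquely ergodic pairs produces a unique Teichm\"uller geodesic $\tau_\omega$ whose vertical and horizontal foliations represent ${\rm bnd}_\pm(\omega)$, and the sublinear-tracking clause of Theorem \ref{Tiozzo-sublinear} applied in each direction yields $d_T(\tau_\omega(L_T n),\omega_n x)/|n|\to 0$ for $n\to\pm\infty$. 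Full support of $\nu$ is already Lemma \ref{fullsupport}.

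The main point is distinctness. Under the product structure, the joint law of $({\rm bnd}_-,{\rm bnd}_+)$ is exactly $\nu\otimes\nu$, so it suffices to check that $\nu$ has no atoms. If $\nu(\{\lambda\})=a>0$ were a \emph{maximal} atom, the stationarity identity $\nu=\mu\ast\nu$ would express $a$ as a convex combination $\sum_g\mu(g)\nu(\{g^{-1}\lambda\})$ of non-negative terms each bounded by $a$; equality in the convex combination forces $\nu(\{g^{-1}\lambda\})=a$ for every $g\in\mathrm{supp}\,\mu$. Iterating with $\mu^{\ast n}$ propagates this to every $h$ in the subgroup generated by $\mathrm{supp}\,\mu$; since this subgroup is nonelementary its orbit on $\mathcal{PML}$ is infinite, contradicting $\nu(\mathcal{PML})=1$. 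Hence $(\nu\otimes\nu)$ assigns zero mass to the diagonal, and the two boundary limits are almost surely distinct.

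Finally, for the cylinder positivity statement, any bilateral cylinder $B\subset G^{\Z}$ corresponds under the isomorphism to a product $B_-\times B_+$ of unilateral cylinders, and by inner regularity of $\nu\otimes\nu$ one can find a product rectangle $V_-\times V_+\subset V$ with $\nu(V_\pm)>0$. The cylinder formula recorded in the paragraph preceding Lemma \ref{fullsupport} (applied separately to each factor, using symmetry of $\mu$ for the backward factor) then gives
\begin{align*}
\overline P\bigl(({\rm bnd}_-\times{\rm bnd}_+)^{-1}(V_-\times V_+)\cap B\bigr)=P({\rm bnd}^{-1}V_-\cap B_-)\,P({\rm bnd}^{-1}V_+\cap B_+)>0,
\end{align*}
whence the desired positivity for the larger set $V$. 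The only step requiring genuine argument is the non-atomicity of $\nu$; everything else is bookkeeping on the product space.
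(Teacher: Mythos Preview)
Your overall strategy is exactly the ``reformulation'' the paper has in mind: pass to the product model $(G^{\N},P)\otimes(G^{\N},P)$, apply Theorems \ref{Kaimanovich-Masur} and \ref{Tiozzo-sublinear} and Lemma \ref{fullsupport} to each factor, and handle distinctness via non-atomicity of $\nu$. All of that is correct.

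There is one genuine error, in the last paragraph. The sentence ``by inner regularity of $\nu\otimes\nu$ one can find a product rectangle $V_-\times V_+\subset V$ with $\nu(V_\pm)>0$'' is false in general: a set of positive product measure need not contain any measurable rectangle of positive measure. (For instance, with Lebesgue measure on $[0,1]$, take $V=\{(x,y):x+y\in E\}$ for a fat Cantor set $E$; any rectangle $A\times B\subset V$ would force $A+B\subset E$, but $A+B$ contains an interval by Steinhaus.) Inner regularity gives compact subsets, not product subsets.

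The fix is short and uses only what is already on the page. Under the product isomorphism a bilateral cylinder $B$ becomes $B_-\times B_+$, where $B_\pm$ is a unilateral cylinder with terminal state $h_\pm$. The cylinder formula displayed just before Lemma \ref{fullsupport} says that the conditional law of ${\rm bnd}$ given $B_+$ is exactly $h_+\nu$, and likewise the conditional law of the backward boundary given $B_-$ is $h_-\nu$. Hence the conditional law of $({\rm bnd}_-,{\rm bnd}_+)$ given $B$ is the product $(h_-\nu)\otimes(h_+\nu)$. The paper already records that $\nu(V')>0$ implies $\nu(gV')>0$ for every $g\in G$, i.e.\ each $g\nu$ is equivalent to $\nu$; therefore $(h_-\nu)\otimes(h_+\nu)$ is equivalent to $\nu\otimes\nu$, and $(\nu\otimes\nu)(V)>0$ forces $((h_-\nu)\otimes(h_+\nu))(V)>0$. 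Multiplying by $\overline P(B)>0$ gives the desired conclusion without ever choosing a rectangle inside $V$.
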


From now on, for $o\in \T(S)$ and for almost every
sample path $\omega \in {\rm Mod}(S)^{\mathbb{Z}}$
we denote by $\omega_{\pm}={\rm bnd}_{\pm}(\omega)$ the
limits $\lim_{n\to \pm \infty} \omega_{n}o\in \mathcal {PML}$,
respectively.
Moreover, $\tau_{o,\omega_+}$ denotes the
Teichm\"uller geodesic ray which connects the basepoint
$o$ to $\omega_+$. Recall that this makes sense since for $\overline{P}$ a.e.
sample path $\omega$ the exit point
$\omega_+$ is uniquely ergodic.
Also, from now on, let $L=L_{T}$ be the drift of the $\mu$ with respect to the Teichm\"uller metric $d_{T}$ (see Theorem \ref{Tiozzo-sublinear}). 

Theorem \ref{thm3} can be reformulated as follows.
For a pseudo-Anosov element $\phi$ let $\gamma_{\phi}$ be a unit speed
parametrization of its axis in $\T(S)$ and $l(\phi)$ its translation length
in $\T(S)$. 
Choose moreover a basepoint $o\in {\mathcal T}(S)$.
For $\zeta>0$ and a subset $W$ of ${\mathcal T}(S)$ let
$N_\zeta(W)$ be the $\zeta$-neighborhood of $W$ for the
Teichm\"uller metric.

\begin{theorem}
\label{thickunilrandom}
There is a number $\zeta>0$ with the following property.
Let $W \subset \T(S)$ be an ${\rm Mod}(S)$ invariant open subset containing an axis of a pseudo-Anosov.
Then for each $R>0$,
there exists $c=c(W,R)> 0$ such that
\begin{align}
P\{\omega \in \Mod(S)^{\mathbb{N}}& \mid\omega_n \mbox{ is p-A and }\notag\\
 l(\omega_{n})^{-1}|\{t \in [0,l(\omega_n)]&: \gamma_{\omega_n}(t-R, t+R)
\subset N_{\zeta}W  \}|  > c\} \notag\\
&\to 1\quad (n \to \infty)\notag\end{align}
\end{theorem}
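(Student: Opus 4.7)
The plan is to translate the question about axes of $\omega_n$ into an ergodic statistic along the bilateral random walk, using Proposition \ref{axisrayteich} as the bridge. Proposition \ref{axisrayteich}, our main technical tool, asserts that for $P$-a.e.\ $\omega$ and large $n$, the axis of the pseudo-Anosov $\omega_n$ stays within uniform Teichm\"uller distance $\zeta$ of a long subsegment of the Teichm\"uller ray from a basepoint $o$ to $\omega_+$.

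First, I would fix a pseudo-Anosov $\phi_0$ with $\gamma_{\phi_0}\subset W$. After possibly moving the basepoint (which changes sample paths only by bounded error), I may assume $o\in \gamma_{\phi_0}$, so that a long segment of $\gamma_{\phi_0}$ through $o$ lies in $W$. Using continuity of bi-infinite Teichm\"uller geodesics in their endpoint pair at uniquely ergodic pairs, together with the full support of $\nu\otimes \nu$ on $\mathcal{PML}\times \mathcal{PML}$ from Lemma \ref{fullsupport}, I would construct an open neighborhood $V$ of $(\phi_0^-,\phi_0^+)$ in $\mathcal{PML}\times \mathcal{PML}$ such that for every pair $(\xi_-,\xi_+)\in V$ the bi-infinite Teichm\"uller geodesic from $\xi_-$ to $\xi_+$ contains a segment around $o$ of length at least $2R+M$ lying in $W$, where $M$ is a constant to be chosen large enough to absorb the $\zeta$ fellow-traveling error and the sublinear tracking errors appearing below. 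By Theorem \ref{bilatreform}, the event $A:=(\mathrm{bnd}_-\times \mathrm{bnd}_+)^{-1}(V)$ has positive $\overline{P}$-measure $c_0>0$.

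Second, the shift $T$ on $(\Mod(S)^\Z, \overline{P})$ is ergodic, being isomorphic to a Bernoulli shift on the increment side, so Birkhoff's theorem gives, for $\overline{P}$-a.e.\ $\omega$,
\[\tfrac{1}{n}|\{k\in [0,n]\mid T^k\omega\in A\}| \to c_0 \quad (n\to \infty).\]
Since $\tau_{T^k\omega} = \omega_k^{-1}\tau_\omega$ as unparameterized geodesics and $W$ is $\Mod(S)$-invariant, each $k$ with $T^k\omega\in A$ corresponds to a $W$-segment of $\tau_\omega$ of length $\geq 2R+M$ near $\omega_k o$. By Theorem \ref{bilatreform}, the points $\omega_k o$ sublinearly track the points $\tau_\omega(Lk)$, so these $W$-segments are asymptotically disjoint and cover a proportion at least $c_0-o(1)$ of the parameter range $\tau_\omega[0,Ln]$.

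Finally, Proposition \ref{axisrayteich} yields a $\zeta$-fellow-traveling between a subsegment of $\tau_\omega$ of length $(1-o(1))Ln$ and a corresponding subsegment of $\gamma_{\omega_n}$. Transferring the $W$-segments of length $\geq 2R+M$ on $\tau_\omega$ across this fellow-traveling to $N_\zeta W$-segments on $\gamma_{\omega_n}$, and using Theorem \ref{translationlength} which gives $l(\omega_n)/n \to L$, I would obtain a positive proportion of parameters $t$ on $\gamma_{\omega_n}$ with $\gamma_{\omega_n}(t-R,t+R)\subset N_\zeta W$, yielding the required constant $c=c(W,R)>0$. The main obstacle will be the bookkeeping in this last step: reconciling the natural parameterization of $\tau_\omega$ (defined only up to an additive constant), the fellow-traveling parameterization of $\gamma_{\omega_n}$ from Proposition \ref{axisrayteich}, and the sublinear tracking errors, so that the margin $M$ can be chosen once and for all to absorb all these errors uniformly. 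Since those errors are sublinear in $n$ while $c_0$ is a fixed positive constant, this will be possible for all sufficiently large $n$.
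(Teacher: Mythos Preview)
Your approach is essentially the paper's: the paper packages your second step (Birkhoff for the shift applied to the event that the bilateral geodesic through $\omega_\pm$ has a long $W$-segment near $o$) as Propositions \ref{openrayteich} and \ref{thickgeodesic}, with Lemma \ref{positivefirst} playing exactly the role of your construction of the open set $V$ via continuity and full support of $\nu$, and then combines this with Proposition \ref{axisrayteich} in your third step.

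Two small corrections are worth flagging. First, you slide between the bilateral geodesic $\tau_\omega$ (used in your ergodic step) and the ray $\tau_{o,\omega_+}$ (to which Proposition \ref{axisrayteich} applies); the paper bridges these by invoking Masur's result that Teichm\"uller geodesics with the same uniquely ergodic vertical foliation are asymptotic, so you should do the same. Second, the claimed proportion ``$c_0-o(1)$'' is not correct as stated: the $W$-segments attached to good indices $k$ need not be asymptotically disjoint, and their union need not have density $c_0$. The paper's proof of Proposition \ref{thickgeodesic} handles this with a complementary density argument (showing that parameters far from every such segment correspond to long runs of bad indices, which have small Birkhoff frequency), recovering some positive constant $c'>0$, which is all you need.
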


To prove Theorem \ref{thickunilrandom} we first present 
two recurrence results for random geodesics, whose proof we defer until the end of the section.
These results can be viewed as weak versions of Birkhoff's ergodic theorem 
for random rays in Teichm\"uller space.

\begin{proposition}
\label{longthinrare}
There is a number $K>0$ with the following property.
For every $0<a<b$ and $M>0$,
for $P$ almost every sample path $\omega$ there is an $n_0>0$ such that
for all $n>n_0$ $\tau_{o,\omega_{+}}([an,bn])$  contains a connected subsegment of length $M$ contained in $N_{K}{\rm Mod}(S)o.$
\end{proposition}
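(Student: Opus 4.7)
The plan is to leverage ergodicity of the Bernoulli shift on the increments of the random walk together with the full support of the harmonic measure $\nu$ on $\mathcal{PML}$ (Lemma \ref{fullsupport}). The structure is: (i) find a $P$-positive-probability event in which the ray $\tau_{o,\omega_+}$ enters the controlled orbit neighborhood early in its life, (ii) spread the event along the shift to produce excursions at many shift-times, (iii) convert shift-times into geodesic-time along $\tau_{o,\omega_+}$ using the sublinear tracking of Theorem \ref{Tiozzo-sublinear} and fellow-travelling of rays to a common uniquely ergodic endpoint.

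For step (i), I would pick any pseudo-Anosov element $\phi_*\in \Mod(S)$ and let $\lambda_*\in \mathcal{PML}$ denote its attracting fixed point. The axis of $\phi_*$ is a closed Teichm\"uller geodesic, so its image in $\mathcal{M}(S)$ is compact and the axis itself is contained in $N_{K_0}\Mod(S)o$ for some $K_0$. The ray $\tau_{o,\lambda_*}$ is forward asymptotic to some translate of this axis, so for large enough $T_0$ and $K \geq K_0+O(1)$ the subsegment $\tau_{o,\lambda_*}([T_0,T_0+M])$ lies in $N_K\Mod(S)o$. Continuity of the ray map $\lambda\mapsto \tau_{o,\lambda}$ on uniquely ergodic laminations (Masur's criterion) then yields an open neighborhood $U\subset \mathcal{PML}$ of $\lambda_*$ on which the same inclusion, possibly with $K$ enlarged by a constant, persists for every $\omega_+\in U$. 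Lemma \ref{fullsupport} gives $\nu(U)>0$, so the event $A=\{\omega:\omega_+\in U\}$ satisfies $P(A)>0$.

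For step (ii), the Bernoulli shift $\sigma$ on increments is measure-preserving and ergodic, and satisfies $(\sigma^k\omega)_+ = \omega_k^{-1}\omega_+$. Birkhoff's ergodic theorem then yields, for $P$-a.e.\ $\omega$, a positive density of indices $k$ with $\sigma^k\omega\in A$, i.e.\ $\tau_{o,\omega_k^{-1}\omega_+}([T_0,T_0+M])\subset N_K\Mod(S)o$. Applying $\omega_k$, which preserves the orbit $\Mod(S)o$, the ray $\tau_{\omega_k o,\omega_+}$ has a length-$M$ subsegment at parameter interval $[T_0,T_0+M]$ inside $N_K\Mod(S)o$. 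For step (iii), $\omega_k o$ lies at distance $o(k)$ from $\tau_{o,\omega_+}(Lk)$ by Theorem \ref{Tiozzo-sublinear}; combined with fellow-travelling of rays to the common uniquely ergodic endpoint $\omega_+$ (which can be extracted from Proposition \ref{axisrayteich} applied to the $\omega_n$-axes that simultaneously approximate both rays), one obtains a uniform constant $C$ and an offset $\epsilon_k=o(k)$ such that $\tau_{o,\omega_+}([Lk+T_0+\epsilon_k,Lk+T_0+M+\epsilon_k])\subset N_{K+C}\Mod(S)o$. Given $0<a<b$ and $M>0$, for $n$ large the shift window $k\in [(a+\delta)n/L,(b-\delta)n/L]$ contains a positive density of good indices, and the $o(n)$ slippage is absorbed by the cushion $\delta$; this delivers the desired subsegment in $[an,bn]$ with the universal constant $K_{\text{prop}}:=K+C$.

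The hard part is the uniform fellow-travelling in step (iii): two Teichm\"uller rays to the same uniquely ergodic endpoint are only a priori asymptotic, not uniformly close, and the sublinear tracking error $o(k)$ from the random walk could a priori inflate the neighborhood constant with $k$. The resolution is to funnel both rays through a common long axis segment of $\omega_n$ supplied by Proposition \ref{axisrayteich} and then to exploit the Morse-type property of Teichm\"uller quasi-geodesics joining thick regions, which bounds the fellow-travelling constant $C$ independently of $k$.
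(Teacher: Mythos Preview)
Your steps (i) and (ii) are sound and parallel the paper's approach, but step (iii) contains a genuine circularity. You propose to transfer the thick segment of $\tau_{\omega_k o,\omega_+}$ to $\tau_{o,\omega_+}$ by invoking Proposition~\ref{axisrayteich}; however, in the paper Proposition~\ref{axisrayteich} is \emph{deduced from} Proposition~\ref{longthinrare}. Its proof needs to know in advance that $\tau_{o,\omega_+}$ visits $N_K\Mod(S)o$ on linearly spaced windows, precisely so that the contraction property of thick Teichm\"uller segments can be applied at those times. Your alternative, a direct ``Morse-type property of Teichm\"uller quasi-geodesics joining thick regions'', has the same defect: that contraction is only available where one of the two geodesics is already known to be thick, which is what you are trying to prove. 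Without such an input, the two rays $\tau_{\omega_k o,\omega_+}$ and $\tau_{o,\omega_+}$ are only asymptotic in the non-uniform sense of Masur, and the good segment at parameter $[T_0,T_0+M]$ of $\tau_{\omega_k o,\omega_+}$ sits near the \emph{start} of that ray, exactly where the two rays can be far apart; the $o(k)$ displacement of $\omega_k o$ from $\tau_{o,\omega_+}(Lk)$ gives no control on this.

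The paper sidesteps the issue by passing to the \emph{bilateral} picture (Proposition~\ref{longthinrarebilateral}). For a two-sided path $\omega$ with uniquely ergodic endpoints $\omega_\pm$ there is a single bi-infinite geodesic $\gamma_\omega$, and the crucial point is that the shift $U^k$ sends $\gamma_\omega$ to $\omega_k^{-1}\gamma_\omega$, the \emph{same} geodesic up to an isometry and a change of basepoint. Thus ``$\gamma_\omega$ has a length-$M$ thick segment near $\omega_k o$'' is literally the event $U^k\omega\in\Lambda(M,K)$ for a fixed positive-measure set, with no fellow-travelling comparison required. Ergodicity of $U$ (via Lemma~\ref{decaylemma}, which boosts the measure arbitrarily close to~$1$) then gives the bilateral statement. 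The unilateral Proposition~\ref{longthinrare} follows by a \emph{single} application of Masur's asymptotic theorem comparing the ray $\tau_{o,\omega_+}$ to the bilateral geodesic $\gamma_\omega$; here non-uniformity is harmless, since it is absorbed once into the choice of $n_0$ rather than once for each~$k$. The missing idea in your argument is exactly this: replace the family of rays $\{\tau_{\omega_k o,\omega_+}\}_k$ by the one bilateral geodesic on which the shift acts by reparametrization.
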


\begin{proposition}
\label{openrayteich}
Let $W \subset \T(S)$ be an ${\rm Mod}(S)$ invariant open subset that  contains an axis of a pseudo-Anosov.
Then for all $R>0$ there exists
a $\hat c=\hat c(R)>0$ such that for almost every sample path $\omega$ we
have:
\[\lim \inf \frac{1}{T}|\{t\in [0,T]\mid
\tau_{o,\omega_+}[t-R,t+R]\subset W\}|>\hat c\]
where $\omega_+={\rm bnd}_T(\omega)\in \mathcal {PML}.$
\end{proposition}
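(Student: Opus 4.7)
The plan is to apply Birkhoff's ergodic theorem to the shift $T$ on the bilateral path space $(G^{\mathbb{Z}},\overline{P})$, which is Bernoulli on the i.i.d.\ increments and hence ergodic, using a function that records a controlled passage of the bi-infinite geodesic $\gamma_\omega$ from $\omega_-$ to $\omega_+$ through $W$ near the basepoint $o$. The $\Mod(S)$-invariance of $W$ will turn a success at shift $k$ into the same condition along $\gamma_\omega$ near the translate $\omega_k o$, and Theorem \ref{Tiozzo-sublinear} will place $\omega_k o$ along $\gamma_\omega$ at parameter $Lk+o(k)$; a fellow-traveling argument will then convert the statement about $\gamma_\omega$ into the desired statement about $\tau_{o,\omega_+}$.

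First I would replace $W$ by a slightly smaller $\Mod(S)$-invariant open set $W'\subset W$ satisfying $N_\delta(W')\subset W$ for some $\delta>0$ and still containing an axis of some pseudo-Anosov $\phi$. Such a $W'$ is obtained by projecting to moduli space $\bar W=W/\Mod(S)$, thickening the compact closed geodesic which is the projection of the given axis $\alpha$, and pulling back. For $\overline{P}$-a.e.\ $\omega$ the endpoints $\omega_\pm$ are distinct and uniquely ergodic (Theorem \ref{Kaimanovich-Masur}), so a unique bi-infinite Teichm\"uller geodesic $\gamma_\omega$ joins them. Parametrize $\gamma_\omega$ by arc length so that $\gamma_\omega(0)$ is a nearest point to $o$, fix $\eta\in(0,L/2)$, and define
\[h(\omega)=\mathbf{1}\bigl\{\gamma_\omega[-R-\eta,R+\eta]\subset W'\bigr\}.\]

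The key point is that $E_{\overline P}[h]>0$. Since $h$ depends only on $(\omega_-,\omega_+)$ and the bi-infinite geodesic depends continuously on uniquely ergodic endpoints, pairs in a small neighborhood of $(\phi^-,\phi^+)\in\mathcal{PML}\times\mathcal{PML}$ produce geodesics uniformly close to $\alpha$ on any fixed compact piece, so their segment of length $2R+2\eta$ around the nearest point to $o$ lies in $W'$ by openness of $W'$. The final clause of Theorem \ref{bilatreform} combined with Lemma \ref{fullsupport} then gives $\overline P(\{h=1\})>0$. Birkhoff's theorem applied to the ergodic $T$ yields $N^{-1}\sum_{k<N}h(T^k\omega)\to E_{\overline P}[h]>0$ almost surely. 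Unwinding, $\gamma_{T^k\omega}=\omega_k^{-1}\gamma_\omega$ and the $\Mod(S)$-invariance of $W'$ show that $h(T^k\omega)=1$ is equivalent to $\gamma_\omega$ having a segment of length $2R+2\eta$ in $W'$ around its nearest point to $\omega_k o$. By Theorem \ref{Tiozzo-sublinear} this nearest point lies at a parameter $t_k=Lk+o(k)$, so for each good $k$ the entire interval $[t_k-\eta,t_k+\eta]$ consists of parameters $t$ with $\gamma_\omega[t-R,t+R]\subset W'$. Since the $t_k$'s are asymptotically separated by $L>2\eta$, the resulting intervals are eventually disjoint and a positive proportion of good $k$'s yields a positive Lebesgue lower density of such $t$'s in $[0,T]$.

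Finally, to pass from $\gamma_\omega$ to $\tau_{o,\omega_+}$ I would invoke Masur's fellow-traveling result for rays sharing a uniquely ergodic endpoint, which provides a shift $c=c(\omega)$ with $d_T(\tau_{o,\omega_+}(s),\gamma_\omega(s+c))\to 0$ as $s\to\infty$; then for $s$ past a threshold and whenever $\gamma_\omega[s+c-R,s+c+R]\subset W'$ we have $\tau_{o,\omega_+}[s-R,s+R]\subset N_\delta(W')\subset W$. The resulting $\overline P$-a.e.\ statement depends only on $\omega_+$, so by Fubini applied to the isomorphism $\overline P=P\otimes P$ of Theorem \ref{bilatreform} it descends to a $P$-a.e.\ statement about unilateral sample paths, giving the proposition. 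The main obstacle is the fellow-traveling step, since Teichm\"uller space is not Gromov hyperbolic and Masur's theorem must be invoked carefully so that the $o(k)$ sublinear errors from Theorem \ref{Tiozzo-sublinear} in locating $t_k$ and the vanishing but non-uniform error in $d_T(\tau_{o,\omega_+}(s),\gamma_\omega(s+c))$ do not erode the positive Lebesgue density produced by the Birkhoff averaging.
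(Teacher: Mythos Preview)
Your strategy matches the paper's: reduce to a bilateral statement about the bi-infinite geodesic $\gamma_\omega$ via Masur's asymptotic theorem and Fubini, then prove the bilateral version by Birkhoff's theorem applied to the shift on $(G^{\mathbb Z},\overline P)$ and an indicator recording that $\gamma_\omega$ passes through $W'$ near $o$, with positivity of the expectation obtained exactly as in the paper's Lemma~\ref{positivefirst}. Steps (1)--(3) and (5)--(6) of your outline are essentially the paper's argument.

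The gap is in the density step. From $t_k=Lk+o(k)$ you cannot conclude that the increments $t_{k+1}-t_k$ tend to $L$ nor that the intervals $[t_k-\eta,t_k+\eta]$ are eventually disjoint: sublinear tracking controls $t_k/k$ but says nothing about consecutive differences, and nearest-point projection to a Teichm\"uller geodesic is not Lipschitz. In fact the error $o(k)$ allows arbitrarily many $t_k$ to cluster in any fixed window (up to $o(N)$ of them for $k\le N$), so a positive density of good $k$'s in $\mathbb N$ does not by itself give positive Lebesgue density of the union $\bigcup_{\text{good }k}[t_k-\eta,t_k+\eta]$ in $[0,T]$. The paper circumvents this by working with the $1$-Lipschitz quantity $s_k=d_T(\omega_k o,\gamma_{\omega,o}(0))$ (so $|s_{k+1}-s_k|\le d=\max_{g\in\operatorname{supp}\mu}d_T(o,go)$), adding the constraint $d_T(o,\gamma_\omega)<K$ to the indicator, and arguing \emph{complementarily}: if $t$ is at distance greater than $Ad+d$ from $\bigcup_{\text{good }k}[s_k-R,s_k+R]$, then no shift $U^i\omega$ with $|i-n_t|\le A$ lies in the good set, and a second application of Birkhoff to this windowed event shows its density in $\mathbb N$ tends to $0$ as $A\to\infty$. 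Hence for large $A$ the $(Ad+d)$-neighborhood of the good set has density at least $9/10$, forcing the good set itself to have density at least $9/(10(Ad+d))$. Your argument can be repaired along these lines, but the disjointness claim as written does not hold.
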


We will use Proposition \ref{longthinrare} together with 
Proposition \ref{axesclosetobasepoint} to prove our analogue of 
Proposition \ref{axesclosetobasepoint} for ${\mathcal T}(S)$.

\begin{proposition}
\label{axisrayteich}
There exists a number $\zeta>0$ with the following property.
For almost every sample path $\omega$, and each $\epsilon$,
there exists a number $n_{0}>0$
such that for $n\geq n_0$,
$\omega_n$ is a pseudo-Anosov mapping class
with translation length $Ln/2<l(\omega_{n})<2Ln$, and
the axis of $\omega_n$ passes within $\zeta$ of
$\tau_{o,\omega_+}(t)$ for every $t\in [\epsilon Ln,(1-\epsilon) Ln]$.
\end{proposition}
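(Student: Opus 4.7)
The plan is to transfer the curve-complex recurrence of Proposition \ref{axesclosetobasepoint} into Teichm\"uller space by anchoring the two Teichm\"uller geodesics at thick points produced by Proposition \ref{longthinrare}. The translation length estimate $Ln/2 < l(\omega_n) < 2Ln$ for $n$ large is immediate from Theorem \ref{translationlength}.

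For the fellow-traveling statement, I would first apply Proposition \ref{axesclosetobasepoint} to the action of $\Mod(S)$ on $\mathcal{C}(S)$, which is Gromov hyperbolic by \cite{MM99}. Also by \cite{MM99}, the systole projection $\Upsilon : \mathcal{T}(S) \to \mathcal{C}(S)$ sends $\tau_{o,\omega_+}$ to a uniform unparametrized quasi-geodesic, and by Klarreich's theorem together with the almost sure unique ergodicity of $\omega_+$ (Theorem \ref{Kaimanovich-Masur}) this quasi-geodesic has a well-defined limit in $\partial \mathcal{C}(S)$. Proposition \ref{axesclosetobasepoint} then guarantees that for large $n$, the $\mathcal{C}(S)$-axis of $\omega_n$ comes within $\kappa$ of $\Upsilon(\tau_{o,\omega_+}[\epsilon Ln,(1-\epsilon)Ln])$. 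Combined with the fact that the two endpoints in $\partial \mathcal{C}(S)$ of this axis accumulate on $\omega_\pm$ as $n \to \infty$ (Theorem \ref{Maher-Tiozzo}), standard Morse-type fellow-traveling in a Gromov hyperbolic space upgrades this to uniform $\kappa'$-fellow-traveling of the two curve-complex quasi-geodesics throughout the interval.

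Next, applying Proposition \ref{longthinrare} to a countable dense collection of parameter pairs $(a,b)$ gives, on a full-measure set and for $n$ large, a family of thick anchors---subsegments of $\tau_{o,\omega_+}$ of fixed length $M$ contained in $N_K \Mod(S)\cdot o$---placed at any prescribed scale throughout $[\epsilon Ln,(1-\epsilon) Ln]$. At each such anchor $\tau_{o,\omega_+}(t_0)$, the fact that $\Upsilon$ restricted to a ball around any point of $N_K \Mod(S) \cdot o$ is a quasi-isometry onto its image in $\mathcal{C}(S)$ (Masur-Minsky) converts the curve-complex $\kappa'$-closeness into Teichm\"uller $\zeta_1$-closeness: some point on the Teichm\"uller axis of $\omega_n$ lies within a universal $\zeta_1$ of $\tau_{o,\omega_+}(t_0)$.

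The hardest step is propagating closeness from thick anchors to every $t \in [\epsilon Ln,(1-\epsilon) Ln]$, because two Teichm\"uller geodesics with matching curve-complex projections can drift far apart inside a Margulis tube. The plan here is to invoke the bilipschitz model of \cite{EL2} used in Proposition \ref{model}: the axis of $\omega_n$ and the ray $\tau_{o,\omega_+}$ inherit from their common curve-complex quasi-geodesic projection essentially the same combinatorial hierarchy over the interval, and hence traverse the same sequence of Margulis tubes with the same subsurface-projection data up to a universal additive error. Anchor-point closeness in the thick part together with this matching combinatorial pattern forces uniform $\zeta$-closeness across each Margulis tube they both cross, producing the desired universal constant $\zeta$ depending only on the genus.
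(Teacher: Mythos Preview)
Your overall strategy---Proposition \ref{axesclosetobasepoint} in $\mathcal{C}(S)$ plus thick anchors from Proposition \ref{longthinrare} to lift curve-complex closeness to Teichm\"uller closeness---matches the paper. The translation-length estimate and the curve-complex fellow-traveling are handled essentially as you describe.

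The gap is in your final step. Your claim that the axis of $\omega_n$ and the ray $\tau_{o,\omega_+}$ ``inherit from their common curve-complex quasi-geodesic projection essentially the same combinatorial hierarchy'' and hence ``the same subsurface-projection data up to a universal additive error'' is not justified and is the crux of the difficulty. Closeness of $\mathcal{C}(S)$-shadows only controls the \emph{main} geodesic of a hierarchy; it says nothing about subsurface projections into proper domains, which are precisely what govern behavior inside Margulis tubes. Two Teichm\"uller geodesics can fellow-travel in $\mathcal{C}(S)$ throughout an interval yet have wildly different twist or subsurface data, and hence drift arbitrarily far apart in $\mathcal{T}(S)$ between your thick anchors. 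Invoking the bilipschitz model of \cite{EL2} does not help here, since that model is built from the full hierarchy, not just its $\mathcal{C}(S)$-shadow. Dense placement of anchors does not close this gap: between consecutive anchors the ray may enter arbitrarily deep thin parts, and you have no mechanism forcing the axis to follow.

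The paper sidesteps this entirely. It uses Proposition \ref{longthinrare} to produce only \emph{two} thick anchors $t_1\in[3\epsilon Ln,4\epsilon Ln]$ and $t_2\in[(1-4\epsilon)Ln,(1-3\epsilon)Ln]$, establishes Teichm\"uller $\kappa'$-closeness of $\tau$ and $\gamma_n$ at those two points via the contraction property of thick Teichm\"uller geodesics \cite{Minsky96,Hamenstaedt10}, and then invokes Rafi's fellow-traveling theorem \cite{Rafi14}: two Teichm\"uller geodesics that are $\kappa'$-close at two points are $\kappa''(\kappa')$-close on the entire segment between them. This single citation replaces your whole Margulis-tube argument and gives the uniform $\zeta$ directly.
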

\begin{proof}
The curve complex $X={\mathcal C}(S)$ is a separable,
Gromov hyperbolic geodesic metric space, and any two points in $X\cup \partial X$ can be connected by a geodesic ray \cite{Bo}. The mapping class group
$G={\rm Mod}(S)$ acts on it as
a nonelementary group of isometries.
Pseudo-Anosov elements
act as loxodromics. Thus by
Theorem \ref{Maher-Tiozzo}, there is an $L'>0$ such that
$$d_{{\mathcal C}(S)}(\omega_{n}x,x)/n\to L'$$
for $P$ almost every sample path $\omega$ and every
$x\in {\mathcal C}(S)$.
Moreover, $P$ almost every sample path $\omega$
converges to some $\omega_{+}\in \partial {\mathcal C}(S)$
(the Gromov boundary of ${\mathcal C}(S)$), and there is a unit
speed geodesic ray $\alpha$ starting at $x$ and converging
to $\omega_{+}$ such that
$d(\alpha(L^\prime n),\omega_{n}x)/n\to 0$.
We refer to \cite{Tiozzo,Maher-Tiozzo}
for a detailed discussion with a comprehensive
list of references.

Let $\pi:\T(S)\to {\mathcal C}(S)$ be the coarsely
well-defined map sending $x$ to a shortest curve on $x$.
Then for a uniform constant $A>1$, $\pi$ is $A$
quasi-Lipschitz (i.e.
$d_{{\mathcal C}(S)}(\pi(x),\pi(y))\leq Ad_{\mathcal T}(x,y)+A$ for
all $x,y\in {\mathcal T}(S)$),
and it sends Teichm\"uller geodesics
to $A$ quasi-geodesics in ${\mathcal C}(S)$ \cite{MM99}.
Moreover, if $g\in {\rm Mod}(S)$ is pseudo-Anosov,
and if $\gamma$ is the axis of $g$ in $\T(S)$, then
$\pi(\gamma)$ is an $A$-quasi-axis for the action of
${\rm Mod}(S)$ on ${\mathcal C}(S)$. This
means that it is a $g$-quasi-invariant
$A$-quasi-geodesic in ${\mathcal C}(S)$.

Furthermore, by Theorem 4.2 of \cite{Minsky96}, Teichm\"uller
geodesics in the thick part of Teichm\"uller space are 
uniformly contracting, and this contraction can be traced in the
curve graph \cite{Hamenstaedt10}. This means that 
for each $K>0$ there is a number $D=D(K)>0$, and for every
$\kappa>0$ there is a number 
$\kappa'>0$ with the following property. Let $\alpha_{1},\alpha_{2}$ be
Teichm\"uller geodesics with
$\alpha_{1}(t-D,t+D)\subset N_{K}{\rm Mod}(S)o$ %thick part%
and $d_{{\mathcal C}(S)}(\pi(\alpha_{1}(t)),\pi(\alpha_{2}(t)))<\kappa$;
then $d_{T}(\alpha_{1}(t),\alpha_{2}(t)))<\kappa'$.

Let as before $L$ be the drift of $\mu$ with respect to the Teichm\"uller metric
$d_{T}$.
For $P$ almost every sample path $\omega$
there exists a unit speed geodesic
ray $\tau=\tau_{o,\omega_{+}}$ in $\T(S)$ based at $o$ and
a unit speed geodesic ray $\alpha$ in
${\mathcal C}(S)$ based at $o'=\pi(o)$ such that
\[d_{T}(\tau(Ln),\omega_{n}o)/n\to 0\text{ and }
d_{C(S)}(\alpha(L'n),\omega_{n}o')/n\to 0.\]
Consequently, since $\pi:\T(S)\to \mathcal{C(S)}$ is coarsely Lipschitz we
have
\begin{equation}\label{synchronize}
d_{{\mathcal C}(S)}(\pi(\tau(Ln)),\alpha(L'n))/n\to 0.\end{equation}

Let $\gamma_{n}$ be the axis of $\omega_n$ if $\omega_n$ is
pseudo-Anosov and $o$ otherwise.
Then $\pi(\gamma_{n})$ is an $A$ quasi-axis for the action of
$\omega_n $ on ${\mathcal C}(S)$,
and $\pi\circ \tau$ is an $A$-quasi-geodesic.
By Proposition \ref{axesclosetobasepoint}, there is a number $\kappa>0$ such that
for $P$ a.e. sample path $\omega$ and every $\epsilon>0$ there is an
$n_{0}>0$ such that for each $n\geq n_{0}$, every point $p\in \pi\circ
\tau$ with $$\epsilon L'n \leq d_{{\mathcal C}(S)}(p,o')\leq
(1-\epsilon)L'n$$
is within $\kappa$ of some point of $\pi(\gamma_{n})$.
Now, if $t\in [3\epsilon Ln,(1-3\epsilon) Ln]$
is any integer and $n$ is large enough, using
the estimate (\ref{synchronize}) we have:
$$d_{{\mathcal C}(S)}(o',\pi(\tau(t)))\leq
d_{{\mathcal C}(S)}(o',\alpha(\frac{L'}{L}t))
+d_{{\mathcal C}(S)}(\alpha(\frac{L'}{L}t),\pi(\tau(t)))
$$ $$ \leq \frac{L'}{L}t + \epsilon
L'n\leq (1-3\epsilon) L'n+\epsilon L'n \leq (1-\epsilon) L'n$$
and similarly
$$d_{{\mathcal C}(S)}(o',\pi(\tau(t)))\geq \epsilon L'n$$

Thus, $\pi(\tau(t))$ is within $\kappa$ of some point of $\pi(\gamma_{n})$,
and if $$\tau([t-D,t+D])\in N_{K}{\rm Mod}(S)o$$ we have that $\tau(t)$ is
within $\kappa'$ of some point of $\gamma_{n}$.
Now note that by Proposition \ref{longthinrare}, for $P$ almost every $\omega$
and for all large enough $n$ there are $$t_{1}\in [3\epsilon Ln, 4\epsilon Ln]$$ and $$t_{2}\in [(1-4\epsilon) Ln, (1-3\epsilon) Ln]$$ with 
$$\tau([t_{i}-D,t_{i}+D])\in N_{K}{\rm Mod}(S)o$$
Thus $\tau(t_{i})$ are within $\kappa'$ of some point of $\gamma_n$ and so by \cite{Rafi14}, $\tau([t_{1},t_{2}])$ is within $\kappa''$ of $\gamma_n$ where $\kappa''$ depends only on $\kappa'$.
\end{proof}

Finally, we will use Propositions \ref{axisrayteich} and \ref{openrayteich} to prove 
Theorem \ref{thickunilrandom} and hence Theorem \ref{thm3}. 

\begin{proof}[Proof of Theorem \ref{thickunilrandom}]
Let $\zeta>0$ be the constant guaranteed by
Proposition \ref{axisrayteich}. Denote as before by $L$ the drift of 
the random walk acting on ${\mathcal T}(S)$.

Let $W\subset {\mathcal T}(S)$ be an open ${\rm Mod}(S)$-invariant
set which contains the axis of a pseudo-Anosov element. Let 
$R>0$ and let
$0<\hat c(R+100\zeta)<1/10$ be the constant guaranteed by Proposition
\ref{openrayteich} for $W$ and $R+100\zeta$ in place of $R$.
Let $\epsilon =\hat c/20<1/200$.

For each $N>0$ let $\Omega_{1}(N)$ be the set of all sample paths $\omega$
such that
$$\frac{1}{T}|\{t\in [0,T]:
\tau_{o,\omega_+}[t-R-100\zeta,t+R+100\zeta]\subset W\}|>\hat c$$ for all $T>(1-\epsilon)LN$.

Let $\Omega_{2}(N)$ consist of all sample paths $\omega$ such that for all
$n>N$,
$\omega_{n}$ is a pseudo-Anosov with the following properties.
The translation length of $\omega_n$ is contained in $[Ln/2,2Ln]$,
and its axis passes within $\zeta$
of $\tau_{o,\omega_+}(t)$ for every $t\in [\epsilon Ln,(1-\epsilon) Ln]$.
Define $\Omega(N)=\Omega_{1}(N)\cap \Omega_{2}(N)$.

Let $\omega\in \Omega(N)$ and let $n\geq N$. Since $\omega\in \Omega_2(N)$,
if $t\in [\epsilon Ln,(1-\epsilon) Ln]$ then
the axis of $\omega_n$ passes within $\zeta$ of $\tau_{o,\omega_+}(t)$.
By the definition of $\Omega_1(N)$,
$[\epsilon Ln,(1-\epsilon) Ln]$
has a (not necessarily connected) subset $I_n$ of Lebesgue measure at least
\[\hat c(1-\epsilon)Ln-\epsilon Ln-(2R+200\zeta)>\hat cLn/2\] which is a union of intervals $J$, each of length at least $2R+200\zeta$, such that
$$\tau_{o,\omega_+}(J)\subset W$$
 for each $J$.
 
Let $I'_{n}\subset I_n$ be a maximal subset which consists of intervals each of length at least $2R+100\zeta$ and any two of which are at least $10\zeta$ apart.
Note, $I'_n$ has measure at least $\hat cLn/4$.
For each interval $J$ of $I'_n$, the axis of $\omega_n$ contains a connected subset of length at least $2R+98\zeta$ contained within $\zeta$ of $\tau_{o,\omega_+}(J)\subset W$. Moreover, since any two intervals of $I'_n$ are at least $10\zeta$ apart, the corresponding subsets of  the axis of $\omega_n$
contained in their $\zeta$ neighborhood are disjoint. 
Thus, the axis of $\omega_n$ contains a subset of measure at least  
$\hat cLn/4$ which is a union of intervals of length at least 
$2R+98\zeta$ contained in $N_{\zeta}(W)$.
Consequently, the $R$-interior of this subset, i.e. the set of 
all points which are contained in one of these intervals
and whose distance to the boundary is at least $R$, 
has measure at least 
$98 \zeta\hat c Ln/4(2R+98\zeta)$.

Since $\omega_n$ has translation length at most $2Ln$, 
the proportion of the points $t$ on the axis which are midpoints of 
segments of length $2R$ entirely contained in 
$N_{\zeta}(W)$ is at least $\tilde c=98 \zeta \hat c /(2R+98\zeta)$:

$$l(\omega_{n})^{-1}|\{t\in [0,l(\phi)] :\gamma_{\omega_n}(t-R, t+R)\subset N_{\zeta}(W)\}|> \tilde c.$$

This holds for each $\omega \in \Omega(N)$ and $n\geq N$.
By Proposition \ref{axisrayteich} and Proposition \ref{openrayteich} we
know that $P(\Omega(N))\to 1$ as $N\to \infty$, completing the proof.
\end{proof}

It remains to prove Proposition \ref{openrayteich} and Proposition \ref{longthinrare}. Since by Theorem \ref{Kaimanovich-Masur}, $\omega_+$ is uniquely ergodic for $P$ almost every $\omega$, and by work of Masur \cite{Masur} any two Teichm\"uller geodesics with the same uniquely ergodic vertical foliation are asymptotic, the propositions follow from certain bilateral analogues. 
Namely,
for $p\in \T(S)$ and $\zeta_{1},\zeta_{2}\in 
{\mathcal P\mathcal M\mathcal L}$ defining a Teichm\"uller geodesic with the $\zeta_i$ as vertical and horizontal measured 
geodesic laminations, let $\gamma_{\zeta_{1},\zeta_{2},p}$ be a unit speed parametrization of this geodesic such that $\gamma_{\zeta_{1},\zeta_{2},p}(0)$ is at minimal distance from $p$. We can make this choice 
in a ${\rm Mod}(S)$ equivariant way, i.e. so that $g\gamma_{\zeta_{1},\zeta_{2},p}(t)=\gamma_{g\zeta_{1},g\zeta_{2},gp}(t)$.

For a bilateral sample path $\omega$ converging to distinct uniquely ergodic $\omega^{\pm} \in {\mathcal P\mathcal M\mathcal L}$ and $p\in \T(S)$ write $\gamma_{\omega,p}$ instead of  
$\gamma_{\omega_{-},\omega_{+},p}$. We also write $\gamma_\omega$
for the trace of the axis in ${\mathcal T}(S)$.
Proposition \ref{openrayteich} follows from the following.

\begin{proposition}
\label{thickgeodesic}
Let $W \subset \T(S)$ be an ${\rm Mod}(S)$ invariant open subset that  contains an axis of a pseudo-Anosov.

For every $R>0$ there exists a $\tilde c=\tilde c(R)>0$
such that for  $\overline{P}$ almost every bilateral sample path $\omega
\in \Mod(S)^{\mathbb{Z}}$  we have:
$$\lim \inf_{T\to \infty} \frac{1}{T}|\{t\in [-T,T]:
\gamma_{\omega,o}([t-R,t+R])\subset W\}|>\tilde c$$
\end{proposition}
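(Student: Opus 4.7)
The plan is to apply Birkhoff's ergodic theorem to the bilateral shift on sample paths, choosing an observable whose Birkhoff sums telescope to the $W$-occupation time of $\gamma_{\omega,o}$, and then converting the per-step average into an arc-length average using the drift $L$ of Theorem \ref{Tiozzo-sublinear}.

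To set this up, observe that the bilateral shift $\sigma:(\Omega,\overline{P})\to(\Omega,\overline{P})$ given by $\sigma(\omega)_i=\omega_1^{-1}\omega_{i+1}$ is measure-preserving and ergodic, as it is conjugate to the i.i.d.\ Bernoulli shift on $(G^{\mathbb{Z}},\mu^{\mathbb{Z}})$. Because ${\rm bnd}_\pm(\sigma\omega)=\omega_1^{-1}{\rm bnd}_\pm(\omega)$, the unparametrized geodesic satisfies $\gamma_{\sigma\omega}=\omega_1^{-1}\gamma_\omega$. Let $s_k(\omega)\in\mathbb{R}$ denote the signed arc-length parameter on $\gamma_{\omega,o}$ of the point nearest to $\omega_k o$, so $s_0(\omega)=0$. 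Using the $\Mod(S)$-invariance of $W$ to cancel the $\omega_1^{-1}$-translation, one checks that with
\[
F(\omega)\;=\;\int_{0}^{s_1(\omega)}\mathbf{1}_{\{\gamma_{\omega,o}[t-R,t+R]\subset W\}}\,dt
\]
we have $F(\sigma^k\omega)=\int_{s_k(\omega)}^{s_{k+1}(\omega)}\mathbf{1}_{\{\gamma_{\omega,o}[t-R,t+R]\subset W\}}\,dt$, so finite support of $\mu$ gives $F\in L^\infty(\overline{P})$ and the Birkhoff sums telescope:
\[
\sum_{k=0}^{N-1}F(\sigma^k\omega)\;=\;\int_{0}^{s_N(\omega)}\mathbf{1}_{\{\gamma_{\omega,o}[t-R,t+R]\subset W\}}\,dt.
\]
Birkhoff's theorem together with $s_N(\omega)=LN+o(N)$, which follows from Theorem \ref{Tiozzo-sublinear} and the fact that nearest-point projection onto $\gamma_\omega$ is coarsely $1$-Lipschitz, then yields that the proportion of $[0,s_N(\omega)]$ on which $\gamma_{\omega,o}$ is $R$-locally in $W$ converges to $\mathbb{E}[F]/L$. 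Applying the same argument to $\sigma^{-1}$ covers the half $[s_{-N}(\omega),0]$, and combining gives the bilateral conclusion on $[-T,T]$ with $\tilde c=\mathbb{E}[F]/(2L)$.

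The hard step is to show $\mathbb{E}[F]>0$; this is the only place where the hypothesis on $W$ enters. Let $\psi\in\Mod(S)$ be pseudo-Anosov with axis $A\subset W$. At the pair $(\zeta_-,\zeta_+)=(\psi^{-\infty},\psi^{+\infty})$ the bilateral geodesic through $o$ is (a reparametrization of) $A$, so $\gamma[t-R,t+R]\subset W$ for every $t$, and in particular for all $t$ in some definite interval $[0,\delta]$. By continuity of Teichm\"uller geodesics in their endpoints on the domain of uniquely ergodic pairs (Masur \cite{Masur}), there exist open neighborhoods $U_\pm\subset\mathcal{PML}$ of $\psi^{\mp\infty},\psi^{+\infty}$ such that the same $R$-collared containment, possibly on a slightly smaller interval, holds for every uniquely ergodic $(\zeta_-,\zeta_+)\in U_-\times U_+$. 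Choosing a cylinder $B\subset\Omega$ that constrains $\omega_1$ so that $s_1(\omega)\geq\delta/2$ on $B$, one obtains $F(\omega)\geq\delta/2$ on the event $B\cap({\rm bnd}_-\times{\rm bnd}_+)^{-1}(U_-\times U_+)$. Lemma \ref{fullsupport} yields $(\nu\otimes\nu)(U_-\times U_+)>0$, and by the cylinder-boundary coupling in Theorem \ref{bilatreform} the intersection has positive $\overline{P}$-mass, so $\mathbb{E}[F]>0$.
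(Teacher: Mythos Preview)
Your telescoping setup is elegant and the reduction to showing $\mathbb{E}[F]>0$ is correct: with $\sigma$ the bilateral shift, the cocycle identity $s_1(\sigma^k\omega)=s_{k+1}(\omega)-s_k(\omega)$ together with $\Mod(S)$-invariance of $W$ does give
\[
\sum_{k=0}^{N-1}F(\sigma^k\omega)=\int_0^{s_N(\omega)}\mathbf{1}_{\{\gamma_{\omega,o}[t-R,t+R]\subset W\}}\,dt,
\]
and $s_N/N\to L$ converts the Birkhoff average into the arc-length density. This is in fact a cleaner packaging than the paper's proof, which works directly with the indicator of the event $\Omega(K,R)=\{\omega:\gamma_{\omega,o}[-2R,2R]\subset W,\ d(o,\gamma_\omega)<K\}$, applies Birkhoff to that, and then translates the per-step count into arc-length density by hand using that consecutive $s_k$ differ by at most $d=\max_{g\in\mathrm{supp}\,\mu}d_T(o,go)$.

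However, the argument you give for $\mathbb{E}[F]>0$ has a genuine gap. Your observable $F$ is a \emph{signed} integral: when $s_1(\omega)<0$ one has $F(\omega)\le 0$, and this happens with positive probability (indeed $\mathbb{E}[s_1]=L$ but $s_1$ takes both signs). Therefore exhibiting an event of positive $\overline P$-measure on which $F\ge\delta/2$ does \emph{not} imply $\mathbb{E}[F]>0$; the negative contribution from $\{s_1<0\}$ must be controlled, and you do not address it. There is also a secondary problem with the sentence ``choosing a cylinder $B$ that constrains $\omega_1$ so that $s_1(\omega)\ge\delta/2$ on $B$'': the quantity $s_1(\omega)$ depends on the geodesic $\gamma_\omega$, hence on $(\omega_-,\omega_+)$, not only on $\omega_1$; and even restricted to the endpoint event $E$ it is not clear that some $g\in\mathrm{supp}\,\mu$ pushes $o$ \emph{forward} along the nearby axis, since $\mathrm{supp}\,\mu$ is a fixed finite set unrelated to the pseudo-Anosov $\psi$.

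The paper's route sidesteps exactly this: it applies Birkhoff to the \emph{nonnegative} indicator $\mathbf{1}_{\Omega(K,R)}$, whose positive expectation follows immediately from the open-neighborhood/continuity argument you sketch (this is the paper's Lemma~\ref{positivefirst}). The resulting per-step density of visits is then converted to a positive arc-length density using only that $|s_{k+1}-s_k|\le d$. If you want to keep your telescoping framework, one clean fix is to observe that $\mathbb{E}[F]=L\cdot(\text{asymptotic arc-length density})\ge 0$ automatically, and then prove strict positivity by bounding $\int_0^{s_N}\mathbf{1}\,dt$ from below via the Birkhoff count $\#\{k<N:\sigma^k\omega\in\Omega(K,2R)\}$, exactly as the paper does; each such $k$ contributes a length-$2R$ interval to the good set, and the bounded-step condition controls overlaps. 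But as written, the step ``hence $\mathbb{E}[F]>0$'' does not follow from what you have shown.
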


\begin{proof}
For each $K,R>0$ let $\Omega(K,R)$ be the set of sample paths $\omega$ such that there is a 
Teichm\"uller geodesic with vertical and horizontal laminations $\omega_{+}$ and $\omega_{-}$, and moreover $\gamma_{\omega,o}([-2R,2R])\subset W$ and $d(o,\gamma_\omega)<K$.
\begin{lemma}\label{positivefirst}
There is a $K=K(W)>0$ such that $\overline{P}(\Omega(K,R))>0$ for all $R>0$.
\end{lemma}
\begin{proof}
Let $K(W)$ be large enough so that there exists a pseudo-Anosov element $\phi$ with attractive and repellant (projective) measured foliations $\phi^{+}$ and $\phi^{-}$ with axis $\gamma_{\phi_{-},\phi_{+}}$  passing within $K/2$ of $o$ and contained in $$W\cap N_{K/2}{\rm Mod}(S)o.$$ Let $\gamma_{\phi_{-},\phi_{+},o}$ be the associated parametrization for its axis.
Let $a>0$ be such that the $a$ neighborhood of the axis of $\phi$ is contained in $W\cap N_{K/2}{\rm Mod}(S)o$.

Filling pairs of laminations are an open subset of 
${\mathcal P\mathcal M\mathcal L}\times 
{\mathcal P\mathcal M\mathcal L}$, and any such pair determines a 
Teichm\"uller geodesic with corresponing vertical and horizontal measured foliations.
Moreover, if a sequence of such 
pairs converges to the pseudo-Anosov pair $(\phi_{-},\phi_{+})$, 
then the corresponding geodesics converge locally uniformly 
to $\gamma_{\phi_{-},\phi_{+},o}$. 
Thus for every $R>0$ there are open neighborhoods 
$U^\pm\subset {\mathcal P\mathcal M\mathcal L}$ 
of $\phi^\pm$ such that  for all $\zeta_{1}\in U^+$ and 
$\zeta_{2}\in U^-$ we have $$d(\gamma_{\zeta_{1},\zeta_{2},o}(t),\gamma_{\phi_{-},\phi_{+},o}(t))<a$$ for all $t\in [-2R,2R]$. 

Let $\Omega'(K,R)$ be the set of all sample paths $\omega$ with $\omega^\pm\in U^\pm$. 
By definition, $\Omega'(K,R)\subset \Omega(K,R)$.
Since the $U^\pm$ are open subsets of 
${\mathcal P\mathcal M\mathcal L}$ and the harmonic measure $\nu$ has full support on ${\mathcal P\mathcal M\mathcal L}$,
we have $\nu(U^\pm)>0$ and hence $\overline{P}(\Omega'(K,R))>0$ and thus $\overline{P}(\Omega(K,R))>0$.  
\end{proof}

Let $\sigma: G^{\mathbb{Z}}\to G^{\mathbb{Z}}$ be the left Bernoulli shift:
$\sigma(\omega)_{n}=\omega_{n+1}.$
By basic symbolic dynamics, $\sigma$ is invertible,
measure preserving and ergodic with respect to $\mu^{\Z}$.
Therefore,
\[U=T\circ \sigma \circ T^{-1}\] is invertible, measure preserving and
ergodic with respect to $\overline{P}$.
Note that for each $n\in \mathbb{Z}$,
$$(U\omega)_{n}=\omega^{-1}_{1}\omega_{n+1}$$
and more generally
$$(U^{k}\omega)_{n}=\omega^{-1}_{k}\omega_{n+k}.$$

Let $K>0$ be as in Lemma \ref{positivefirst}.
Since $W$ and $d_T$ are ${\rm Mod}(S)$ invariant, we have that 
$U^{k}\omega \in \Omega(K,R)$ if and only if $\gamma_{\omega,\omega_{k}o}([-2R,2R])\subset W$ and $d(\omega_{k}o,\gamma_\omega)<K$.

Without loss of generality, we can assume that $R>100K$. 
Let $s_{i}(\omega)=d_{T}(\omega_{i}o,\gamma_{\omega,o}(0))$.
By Theorem \ref{Tiozzo-sublinear}, for almost all $\overline{P}$ almost all $\omega$ we have 
$s_{i}(\omega)/i\to L$. 

If $U^{i}\omega \in \Omega(K,R)$
then there is some $t_i(\omega)$ with 
$|t_{i}(\omega)-s_{i}(\omega)|<2K$ and $$\gamma_{o,\omega}[t_{i}(\omega)-2R,t_{i}(\omega)+2R]\subset W.$$

Define $$I(\omega)=\cup_{\{i\mid U^{i}(\omega)\in\Omega(K,R)\}}[t_{i}(\omega)-R,t_{i}(\omega)+R].$$
We need to show that $I(\omega)$ has positive density in $\R$,
ie that $$\lim \inf_{\rho\to \infty}|I(\omega)\cap[-\rho,\rho]|/2\rho>0.$$
Since $$|t_{i}(\omega)-s_{i}(\omega)|<2K$$ whenever $U^{i}\omega \in \Omega(K,R)$,
it suffices to show that 
$$I'(\omega)=\cup_{\{i\mid U^{i}(\omega)\in\Omega(K,R)\}}[s_{i}(\omega)-R,s_{i}(\omega)+R]$$ has positive density in $\R$.

Let $$d=\sup \{d_{T}(o,go)\mid g\in {\rm supp}(\mu)\}$$ (this is finite since $\mu$ has finite support). Then by 
${\rm Mod}(S)$ invariance of $d_{T}$, for all $\omega$ and $k$ we have $d_{T}(\omega_{k+1}o,\omega_{k})<d$.
Thus
for each $$t>d_{T}(o,\gamma_\omega)$$ there is a $n_{t}(\omega)\in \N$ with $$|s_{n_{t}(\omega)}(\omega)-t|<d.$$

If $q>0$ and if $n'$ is a number with $|n_{t}(\omega)-n'|\leq q$ then
$$d_T(\omega_{n_t(\omega)}o,\omega_{n'}o)\leq qd$$ and
hence  ${\rm dist}(t,I'_{n'}(\omega))\leq qd+d$.
Thus if $n^\prime$ is such that
$U^{n^\prime}(\omega)\in \Omega(K,R)$
then $t\in N_{qd+d}(I'(\omega))$.

Now, for $A>0$ assume that $t$ is such that
$d(t,I'(\omega))>Ad+d$. Then there exists an integer
$n>0$ with $U^{i}\omega \notin \Omega(K,R)$ for any $i\in [n-A,n+A]$ and $|t-d(\omega_{n}o,\gamma_{\omega}(0))|<d$.

Let $\Omega_{0}\subset \Mod(S)^{\mathbb{Z}}$ be the $\overline{P}$
full measure set consisting of $\omega$ such that
$d(\omega_{m}o,o)/m\to L$ as $m\to \pm \infty$.
For $\omega\in \Omega_0$ and large enough $t$(depending on $\omega$) we have
$n_{t}(\omega)\in [9t/(10L),11t/(10 L)]$.

Define
\[\Lambda(A,K,R)=\{\omega\mid
U^{i}\omega \notin \Omega(K,R)\text{ for any }|i|\leq A\}.\]

Let $\Upsilon_{A,K,R}(\omega)$ be the set of
$k\in \mathbb{Z}$ with $U^{i}\omega \notin \Omega(K,R)$
for any $$k-A\leq i\leq k+A.$$

By definition, $U^{k}\omega \in \Lambda(A,K,R)$ if and only
if $k\in \Upsilon_{A,K,R}(\omega)$.
Thus any large enough $t$ with $d(t,I'(\omega))>Ad+d$
is contained in $$[d_T(\omega_{k}o,\gamma_{\omega}(0))-d,
d_T(\omega_{k}o,\gamma_{\omega}(0))+d]$$
for some $k<11t/(10L)$ with $k\in \Upsilon_{A,K,R}(\omega)$.
By the Birkhoff ergodic theorem,
$\overline{P}(\Lambda(A,K,R))\to 0$ as $A\to \infty$.

Also by the Birkhoff ergodic theorem,
for almost every $\omega$,
\[\lim_{N\to \infty}|[-N,N]\cap
\Upsilon_{A,K,R}(\omega)|/(2N)= \overline{P}(\Lambda(A,K,R)).\]

Thus for large enough $T$ (depending on $A$)
the Lebesgue measure of the set
\[\{t\in [-T,T]\mid d(t,I'(\omega))>Ad+d\}\] is less than
$3d P(\Lambda(A,K,R))(11T/10L)$.

Therefore the density of
$t\in \mathbb{R}$ with $d(t,I'(\omega))>Ad+d$ is less than
$3dP(\Lambda(A,K,R))/L$ which is less than $1/10$ for large enough $A$.
Thus for large enough $A$ the $Ad+d$ neighborhood of $I'(\omega)$ has density at least $9/10$ in
$\mathbb{R}$ so $I'(\omega)$ itself has density at least
$c=\frac{9}{10(Ad+d)}>0$ in $\mathbb{R}$.
This completes the proof of Proposition \ref{thickgeodesic}.
\end{proof}

Proposition \ref{longthinrare} follows from the following bilateral statement.
\begin{proposition}
\label{longthinrarebilateral}
There is a $K>0$ with the following property.
For every $a<b$ and $M>0$,
for $P$ almost every sample path $\omega$ there is an $n_0>0$ such that
for all $n>n_0$ $\gamma_{\omega,o}(cn,dn)$  contains a connected subsegment of length $M$ contained in $N_{K}{\rm Mod}(S)o.$
\end{proposition}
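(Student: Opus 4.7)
The plan is to adapt the ergodic strategy from the proof of Proposition \ref{thickgeodesic}, replacing the positive-measure event ``the $o$-based geodesic contains a segment inside $W$'' with the event ``the $o$-based geodesic contains a long segment inside a bounded Teichm\"uller ball around $o$.'' For fixed $M>0$, I would first pick any pseudo-Anosov $\phi$ and parametrize its axis $\alpha=\gamma_{\phi_-,\phi_+,o}$ at minimal distance to $o$, so $\alpha([-M,M])\subset B(o,K_0)$ for some $K_0$ in the Teichm\"uller metric. Arguing as in Lemma \ref{positivefirst}, Teichm\"uller geodesics with vertical/horizontal foliations in small $\mathcal{PML}$-neighborhoods $U^\pm$ of $\phi^\pm$ converge locally uniformly to $\alpha$ under their minimal-distance-to-$o$ parametrizations, so they stay in $B(o,2K_0)$ on $[-M,M]$. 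Setting $K=2K_0$ and invoking the full-support Lemma \ref{fullsupport} together with Theorem \ref{bilatreform}, the event
\[
\Omega_0(M)=\{\omega:\gamma_{\omega,o}([-M,M])\subset B(o,K)\}
\]
has positive $\overline{P}$-measure.

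Next I would apply Birkhoff's theorem to the ergodic transformation $U=T\circ\sigma\circ T^{-1}$ on $(\Mod(S)^{\mathbb{Z}},\overline{P})$ introduced in the proof of Proposition \ref{thickgeodesic}, with indicator $\mathbf{1}_{\Omega_0(M)}$: for $\overline{P}$-a.e.\ $\omega$ the set $E(\omega)=\{k\in\mathbb{Z}:U^k\omega\in\Omega_0(M)\}$ has density $\overline{P}(\Omega_0(M))>0$. Equivariance of the minimal-distance parametrization under $\Mod(S)$ gives $\gamma_{U^k\omega,o}(s)=\omega_k^{-1}\gamma_{\omega,\omega_k o}(s)$, hence $U^k\omega\in\Omega_0(M)$ is equivalent to $\gamma_{\omega,\omega_k o}([-M,M])\subset B(\omega_k o,K)$. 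Letting $t_k(\omega)$ denote the parameter on $\gamma_{\omega,o}$ at which $\omega_k o$ is closest, one has $\gamma_{\omega,\omega_k o}(\cdot)=\gamma_{\omega,o}(\cdot+t_k)$; so for $k\in E(\omega)$,
\[
\gamma_{\omega,o}([t_k-M,t_k+M])\subset B(\omega_k o,K)\subset N_K(\Mod(S)o).
\]

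Finally I would synchronize $t_k$ with the linear parameter. By Theorem \ref{bilatreform}, $d_T(\omega_k o,\gamma_{\omega,o}(Lk))/|k|\to 0$ as $k\to\pm\infty$, so $t_k(\omega)/k\to L$ in both directions. Given any $a<b$ and a small $\epsilon>0$, for $n$ large the preimage $\{k:t_k(\omega)\in[(a+\epsilon)n,(b-\epsilon)n]\}$ is essentially the integer interval $[(a+\epsilon)n/L,(b-\epsilon)n/L]$ of length $\Theta(n)$, and therefore meets the positive-density set $E(\omega)$; for such a $k$, once also $\epsilon n>M$, the segment $[t_k-M,t_k+M]$ lies inside $[an,bn]$, yielding the required subsegment of length $2M$ in $N_K(\Mod(S)o)$. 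The cases $a<b\le 0$ and $a\le 0\le b$ are handled identically using the two-sided version of the tracking estimate. The step I expect to be most delicate is the first one, where one must verify that the local-uniform convergence of Teichm\"uller geodesics with endpoints in $\mathcal{PML}$ respects the minimal-distance-to-$o$ reparametrization used to define $\gamma_{\zeta_-,\zeta_+,o}$; once this is in hand the Birkhoff step and the drift reduction are routine.
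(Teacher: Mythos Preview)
Your approach is correct and is in fact a genuine simplification of the paper's argument. The paper introduces an auxiliary family of events $\Omega(M,K,R)$ depending on an extra scale parameter $R$, proves via a separate Lemma \ref{decaylemma} that $\overline{P}(\Omega(M,K,R))\to 1$ as $R\to\infty$, chooses $R$ so large that this measure exceeds $1-(b-a)/(10a+10b)$, and then argues by contradiction: if the conclusion failed for infinitely many $n$, a whole block of iterates $U^i\omega$ of length proportional to $n$ would miss $\Omega(M,K,R)$, forcing the Birkhoff average strictly below $\overline{P}(\Omega(M,K,R))$. You bypass all of this by exploiting that Birkhoff gives an honest limit, not just a liminf: once $|E(\omega)\cap[0,N]|/N\to p>0$, subtracting the counts at $dn$ and $cn$ yields $|E(\omega)\cap[cn,dn]|=p(d-c)n+o(n)>0$ for large $n$, so the interval is hit directly. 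This eliminates the need for the extra parameter $R$ and for Lemma \ref{decaylemma}; what you use is precisely the positive-measure Claim that the paper proves \emph{inside} that lemma.

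One point does need repair. Your event $\Omega_0(M)=\{\gamma_{\omega,o}([-M,M])\subset B(o,K)\}$ forces $K\geq M+d_T(o,\alpha)$, so your $K$ depends on $M$, contradicting the quantifier order in the proposition (``there is $K>0$ such that for every $M>0$\dots''). The fix is to replace $B(o,K)$ by $N_K(\Mod(S)o)$, exactly as in the paper's set $\Lambda(M,K)$: choose once and for all a pseudo-Anosov $\phi$ whose axis lies in $N_{K/2}(\Mod(S)o)$ for some fixed $K$, and then your local-uniform-convergence argument shows $\{\omega:\gamma_{\omega,o}([-M,M])\subset N_K(\Mod(S)o)\}$ has positive $\overline{P}$-measure for every $M$, with $K$ independent of $M$. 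After this substitution the rest of your argument (the equivariance giving $\gamma_{\omega,o}([t_k-M,t_k+M])\subset N_K(\Mod(S)o)$ for $k\in E(\omega)$, the drift estimate $t_k/k\to L$, and the Birkhoff step) goes through unchanged. The delicacy you flag about convergence of the minimal-distance parametrizations is real but is treated exactly the same way in the paper's Lemma \ref{positivefirst} and in the Claim inside Lemma \ref{decaylemma}, so you are on equal footing there.
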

\begin{proof}
Let $\Omega(M,K,R)$ be the set of sample paths $\omega\in {\rm Mod}(S)^{\Z}$ such that
$d_{T}(o,\gamma_\omega)<R/10$ and
$\gamma_{\omega,o}(t-M,t+M)\subset N_{K}{\rm Mod}(S)o$ for some $t\in (-R/2+M,R/2-M)$.
\begin{lemma}
\label{decaylemma}
There is a $K>0$ such that for all $M>0$ there is an function $f$ with $\lim_{R\to \infty}f(R)=0$ and  
$\overline{P}(\Omega(M,K,R))>1-f(R)$.
\end{lemma}
We first continue with the proof of Proposition \ref{longthinrarebilateral} assuming Lemma \ref{decaylemma} and will prove Lemma \ref{decaylemma} afterwards.

Assume without loss of generality that $a>0$ (notation as in the statement of 
Proposition \ref{longthinrarebilateral}).
Let as before $\Omega_{0}\subset 
{\rm Mod}(S)^\Z$ denote the $\overline{P}$ full measure set of 
all $\omega$ such that $$d_{T}(\omega_{i}o,o)/i\to L$$ and consider $\omega\in \Omega_0$.
Choose $R>0$ large enough so that $$1-f(R)>(b-a)/(10a+10b)$$
Note, $U^{k}\omega \in \Omega(M,K,R)$ if and only if 
$d_{T}(\omega_{i}o,\gamma_\omega)<R/10$ and
$$\gamma_{\omega,\omega_{i}o}(t-M,t+M)\subset N_{K}{\rm Mod}(S)o$$ for some $t\in (-R/2+M,R/2-M)$.
This implies that $$\gamma_{\omega,o}(t_{i}(\omega)-M,t_{i}(\omega)+M)\subset 
N_{K}{\rm Mod}(S)o$$
for some $t_{i}(\omega)$ with $|t_{i}(\omega)-d(\omega_{i}o,\gamma_{\omega,o}(0))|<R/10$.

Let $s_{i}(\omega)=d_{T}(\omega_{i}o,\gamma_{\omega,o}(0))$ and let again
$$d=\sup \{d_{T}(o,go)\mid g\in {\rm supp}(\mu)\}.$$ 
As in the proof of Proposition \ref{thickgeodesic}, note for every $t>d_{T}(o,\gamma_\omega)$ there is some $i(t)$ with 
$|t-s_{i(t)}(\omega)|<d$. Hence, for large enough (depending on $\omega$) $n$ if there is an $i$ with $$U^{i}\omega \in \Omega(M,K,R)$$ and $$(2a+b)n/3\leq s_{i}(\omega)\leq (a+2b)n/3$$ then $$\gamma_{\omega,o}([an,bn])\cap  N_{K}{\rm Mod}(S)o$$ has a connected segment of length $M$.

Moreover, $s_{i}(\omega)/i \to L$. Thus, for large enough $n$, we have $$(2a+b)n/3\leq s_{i}(\omega)\leq (a+2b)n/3$$ for every $i$ with $$\frac{(3a+2b)n}{5L}\leq i\leq\frac{(2a+3b)n}{5L}$$

Hence, if $\gamma_{\omega,o}([an,bn])\cap  N_{K}{\rm Mod}(S)o$ does not have a length $M$ connected segment we have $$U^{i}\omega \notin \Omega(M,K,R)$$ for any 
$$\frac{(3a+2b)n}{5L}\leq i\leq\frac{(2a+3b)n}{5L}$$ If this holds for infinitely many $n$ we have 
$$\lim \inf_{N\to \infty}\frac{|\{i\in [0,N-1]\mid U^{i}\omega \in \Omega(M,K,R)\}|}{N}\leq 1-\frac{b-a}{2a+3b}$$
On the other hand, by the Birkhoff ergodic theorem we have:
$$\lim_{N\to \infty}\frac{|\{i\in [0,N-1]\mid U^{i}\omega \in \Omega(M,K,R)\}|}{N}$$ $$=\overline{P}(\Omega(M,K,R))>1-(b-a)/(10a+10b)$$ giving a contradiction.
\end{proof}
Finally, we prove Lemma \ref{decaylemma}.
\begin{proof}[Proof of  Lemma \ref{decaylemma}]
Clearly the $\overline{P}$ measure of $\omega\in {\rm Mod}(S)^{\Z}$ such that
$d_{T}(o,[\omega_{-},\omega_{+}])<R/10$ converges to 1 with $R$.
Thus it suffices to show that for each $M>0$ the $\overline{P}$ measure of $\omega$ such that $\gamma_{\omega,o}([-R,R])\cap N_{K}{\rm Mod}(S)o $ contains a length $M$ connected subsegment converges to $1$ with $R$.
Let $\Lambda(M,K)$ be the set of $\omega$ such that $d_{T}(o,\gamma_\omega)<K$ and $$\gamma_{\omega,o}([-M,M])\subset N_{K}{\rm Mod}(S)o.$$
The following is similar to Lemma \ref{positivefirst}.

\begin{claim}
There is a $K>0$ such that $\overline{P}(\Lambda(M,K))>0$ for all $M$.
\end{claim}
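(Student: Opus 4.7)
The plan is to adapt the proof of Lemma \ref{positivefirst} to the present setting, replacing the condition that a compact piece of a pseudo-Anosov axis lies inside the open set $W$ by the condition that the piece lies inside the orbit neighborhood $N_K{\rm Mod}(S)o$. The starting point is to produce a single pseudo-Anosov $\phi \in {\rm Mod}(S)$ whose whole axis is contained in $N_{K/2}{\rm Mod}(S)o$ for $K$ sufficiently large, and whose axis passes within $K/2$ of the basepoint $o$.

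First I would exhibit $\phi$ using Mumford compactness: for small enough $\epsilon > 0$, the $\epsilon$-thick part ${\mathcal M}(S)_\epsilon$ of moduli space is compact, and it supports periodic orbits of the Teichm\"uller flow (for example any of the equidistributing closed orbits invoked in Section~\ref{typical}). Any such closed orbit lifts to the axis of a pseudo-Anosov element entirely contained in the full preimage of ${\mathcal M}(S)_\epsilon$ in $\T(S)$, which for a suitable $K$ coincides with $N_{K/2}{\rm Mod}(S)o$. Replacing $\phi$ by a ${\rm Mod}(S)$-conjugate places its axis within $K/2$ of $o$.

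Next I would invoke continuity of Teichm\"uller geodesics in their endpoints on the open set of filling pairs: the map $(\zeta_-,\zeta_+) \mapsto \gamma_{\zeta_-,\zeta_+,o}$ is continuous in the topology of locally uniform convergence (with the ${\rm Mod}(S)$-equivariant choice of parametrization). Hence for any given $M>0$ there exist open neighborhoods $U^-$ of $\phi^-$ and $U^+$ of $\phi^+$ in ${\mathcal{PML}}$ such that for every $(\zeta_-,\zeta_+) \in U^- \times U^+$, the associated geodesic $\gamma_{\zeta_-,\zeta_+,o}$ stays uniformly close to $\gamma_{\phi^-,\phi^+,o}$ on $[-M,M]$, in particular entirely inside $N_K{\rm Mod}(S)o$, and with $d_T(o,\gamma_{\zeta_-,\zeta_+,o}) < K$. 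Such geodesics meet the defining conditions of $\Lambda(M,K)$.

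Finally I would deduce positivity of $\overline{P}(\Lambda(M,K))$. By Lemma \ref{fullsupport} the harmonic measure $\nu$ has full support, so $\nu(U^\pm) > 0$. Applying Theorem \ref{bilatreform} to $V = U^- \times U^+$ and the trivial cylinder then gives that the set of bilateral sample paths $\omega$ with $\omega^- \in U^-$ and $\omega^+ \in U^+$ has positive $\overline{P}$-measure; this set is contained in $\Lambda(M,K)$. The main obstacle is the first step, namely producing a pseudo-Anosov whose \emph{entire} axis, not merely one fundamental domain, stays in a uniform orbit neighborhood of $o$; this is what forces the use of Mumford compactness of the thick part together with the existence of pseudo-Anosov closed orbits there.
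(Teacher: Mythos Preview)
Your proposal is correct and follows essentially the same route as the paper's proof: choose a pseudo-Anosov $\phi$ whose axis lies in $N_{K/2}{\rm Mod}(S)o$ and passes near $o$, use local uniform convergence of Teichm\"uller geodesics in their endpoint pair to find open $U^\pm\subset\mathcal{PML}$ around $\phi^\pm$ whose associated geodesics satisfy the defining conditions of $\Lambda(M,K)$, and conclude positivity from full support of the harmonic measure. Your write-up is in fact more explicit than the paper's, which simply asserts the existence of such a $\phi$ without invoking Mumford compactness or the conjugation step.
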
 
\begin{proof}
Let $K>0$ be large enough so that there exists a pseudo-Anosov element $\phi$ with attractive and repellant (projective) measured foliations $\phi^{+}$ and $\phi^{-}$ with axis $\gamma_{\phi_{-},\phi_{+}}$  passing within $K/2$ of $o$ and contained in $N_{K/2}{\rm Mod}(S)o$. Let $\gamma_{\phi_{-},\phi_{+},o}$ be the associated parametrization for its axis.
Then there are open neighborhoods $U^\pm\subset \mathcal{PML}$ of 
$\phi^\pm$ such that  for all $\zeta_{1}\in U^+$ and 
$\zeta_{2}\in U^-$, $d_{T}(\gamma_{\zeta_{1},\zeta_{2},o}(t),\gamma_{\phi_{-},\phi_{+},o}(t))<K/2$ for all $t\in [-2M,2M]$. 

Let $\Lambda'(M,K)$ be the set of all sample paths $\omega$ with $\omega^\pm\in U^\pm$. 
By definition, $\Lambda'(M,K)\subset \Lambda(M,K)$.
Since the $U^\pm$ are open subsets of 
$\mathcal{PML}$ and the harmonic measure $\nu$ has full support on 
$\mathcal{PML}$ we have $\nu(U^\pm)>0$ and hence $\overline{P}(\Lambda'(M,K))>0$ and thus $\overline{P}(\Lambda(M,K))>0$.  
\end{proof}
Note, $U^{i}\omega \in \Lambda(M,K)$ if and only if 
$$d_{T}(o,\gamma_\omega)<K$$ and  $$\gamma_{\omega,\omega_{i}o}([-M,M])\subset N_{K}{\rm Mod}(S)o.$$
Note, $d_{T}(o,\omega_{i}o)\leq di$ and hence if $$U^{i}\omega \in \Lambda(M,K)$$ for some $i$ with
$$0\leq i\leq \frac{R-M-2K}{2d}$$ then $$\gamma_{\omega,o}([-R,R])\cap N_{K}{\rm Mod}(S)o$$ contains a length $M$ connected subsegment. 
By the Birkhoff ergodic theorem, the $\overline{P}$ measure of sample paths $\omega$ such that $U^{i}\omega \notin \Lambda(M,K)$ for all $i$ with 
$$0\leq i\leq \frac{R-M-2K}{2d}$$ converges to $0$ with $R$ completing the proof.
\end{proof}
%%%%%%%%%%%%%%%%%%%%%%%%%%%%%%%%%%%%%%%%%%%%%%%%%%%%%%%%%%%%%%%%%%%%%%%%%%%

%%%%%%%%%%%%%%%%%%%%%%%%%%%%%%%%%%%%%%%%%%%%%%%%%%%%%%%%%%%%%%%%
%%%%%%%%%%%%%%%%%%%%%%%%%%%%%%%%%%%%%%%%%%%%%%%%%%%%%%%%%%%%%

%%%%%%%%%%%%%%%%%%%%%%%%%%%%%%%%%%%%%%%%%%%%%%%%%%%%%%%%%%%%%%%%%%%%%%%%%%%
%%%%%%%%%%%%%%%%%%%%%%%%%%%%%%%%%%%%%%%%%%%%%%%%%%%%%%%%%%%%%%%%%%%%%%%%%%%

\bigskip

\noindent
Hyungryul Baik, Department of Mathematical Sciences, KAIST
291 Daehak-ro, Yuseong-gu, 34141 Republic of Korea\\
Ilya Gekhtman,
Department of Mathematics, Yale University, 10 Hillhouse Ave, New Haven, Connecticut 06520, USA\\
Ursula Hamenst\"adt,
Universit\"at Bonn, Endenicher Allee 60, 53115 Bonn, Germany\\
\noindent
e-mail: \\
Hyungryul Baik: hrbaik@kaist.ac.kr\\
Ilya Gekhtman: ilya.gekhtman@yale.edu\\
Ursula Hamenst\"adt: ursula@math.uni-bonn.de

\end{document}